%% file: main.tex
\documentclass[10pt]{article}

\usepackage[plainpages=false,pdfpagelabels,colorlinks=true,linkcolor=mypurple,urlcolor=mypurple,citecolor=mypurple]{hyperref}

\usepackage{tptstyle}

\usepackage{fullpage}

\title{Optimal use of a black-box learner in semiparametric estimation}

\author{Yihong Gu \\ Harvard University}
\date{}

\begin{document}
\begin{titlepage}
\clearpage\maketitle
\thispagestyle{empty}

\begin{abstract}{
\input{0-abstract.tex}
} 
\end{abstract} 
\textbf{Keywords}: Adversarial estimation, Black-box model, Causal inference, Partial linear model, Structure-agnostic optimality

% TABLE OF CONTENTS with page link
%\phantomsection
%\renewcommand{\contentsname}{Table of Contents}
%\hypertarget{toc}{}
%\setcounter{tocdepth}{2}
%\tableofcontents

\thispagestyle{empty}
\end{titlepage}

\maintextrefs
\input{1-text.tex}

\input{2-discussion.tex}

\input{3-proof.tex}

\bibliographystyle{apalike2}
\bibliography{main.bbl}

\newpage
\appendix
\appendixrefs

\end{document}

%% file: 0-abstract.tex
Consider the partial linear model $Y = \mu_0(X) + \beta_0 \cdot T + \varepsilon$ and $T = \pi_0(X) + u$ in the {\it structure-agnostic} setting, where we are blind to the structure $\mu_0$ and $\pi_0$ and estimate the nuisances by a black-box hypothesis class. 
The learnability of the class is characterized by the estimation error $\delta_s$ in the absence of model misspecification and its $L_2$ mis-specification error $\delta_{a, \mu}$ and $\delta_{a, \pi}$ for $\mu_0$ and $\pi_0$, respectively.
We propose a novel estimator of the target linear coefficient $\theta_0 = \beta_0$ with error rate
\[
    \frac{1}{\sqrt{n}} + \delta_{a, \mu} \cdot \delta_{a, \pi} + [\delta_s]^2.
\]
A matching lower bound is also established, implying that this rate is unimprovable. Compared with the product rate yielded by double machine learning (DML), our estimator removes the suboptimal term $\max(\delta_{a, \mu}, \delta_{a, \pi})\cdot \delta_s$ at no extra cost or assumption. 

Building on the underlying insights, which are neither tailored to the one-learner setting nor the partial linear model, we propose {\it Transductive Adversarial Moment-calibrated Editing (TAME)}, which locally edits debiasing weights induced by black-box regression estimates on the inference sample through adversarial conditional moment calibration. TAME can be combined with any initial black-box estimates and can strictly improve on DML guarantees when the nuisance difficulties are imbalanced. We discuss how to fully exploit the advantages introduced by TAME, including the gains from using two learners, the resulting under-smoothing principle for model selection, and extensions to other linear functional estimation problems. 

%% file: 1-text.tex
\section{Introduction}

Modern semiparametric estimation leverages flexible black-box models to estimate nuisance functions, raising the fundamental question: {\it How can we optimally transcribe the nuisance errors of generic black-box learners into the final estimation error of the target parameter}? The dominant paradigm, exemplified by Double/Debiased Machine Learning (DML) \citep{chernozhukov2017double, chernozhukov2018double}, plugs the resulting black-box estimates into a Neyman-orthogonal score, yielding a bound in which the nuisance estimation errors enter multiplicatively. While this multiplicative propagation of nuisance errors is known to be sub-optimal in \emph{structure-aware} settings, for example, the nuisance functions are H{\"o}lder smooth \citep{robins2008higher} or sparse linear models \citep{zhang2014confidence, bradic2022testability}, it was recently shown by \cite{gu2026optimally} that the DML multiplicative error rate can also be \emph{provably} improved in \emph{structure-agnostic} settings with generic black-box models. However, the proposed method therein only answers the above question in the specific regime where the treatment function (or more generally, the Riesz representer for the target linear functional) cannot be consistently estimated, under which only the outcome function comes into play. Building upon and generalizing the insights from \cite{gu2026optimally}, this paper answers the above question when both the outcome function and the treatment function can be estimated efficiently by one black-box learner, and based on this, proposes a post-prediction debiasing procedure that can be applied to arbitrary initial black-box estimates.

\subsection{A canonical example and our new message} \label{subsec:problem}

We begin by studying the partial linear model \citep{engle1986semiparametric, rice1986convergence}, one of the most canonical semiparametric problems and a setting in which the structure-aware literature, including either H{\"o}lder smooth class \citep{donald1994series, carroll1997generalized, ai2003efficient, fan2005profile, chen2007large, robins2008higher} or sparse linear models \citep{zhang2014confidence, van2014asymptotically, javanmard14confidence, cai2017confidence, javanmard2017debiasing}, offers substantial messages. Suppose we have 2$n$ observations $\{(X_i, T_i, Y_i)\}_{i=1}^{2n}$ drawn i.i.d. from the model 
\begin{equation}
\label{eq:intro:model}
\begin{alignedat}{2}
    Y &= \beta_0 \cdot T + \mu_0(X) + \varepsilon &&\qquad \text{with} \qquad \mathbb{E}[\varepsilon|X, T] = 0, \\
    T &= \pi_0(X) + u &&\qquad \text{with} \qquad \mathbb{E}[u|X] = 0.
\end{alignedat} 
\end{equation} Here $X \in \mathbb{R}^d$ is the \emph{covariate}, $Y\in \mathbb{R}$ is the \emph{response}, $T$ is the particular variable of interest, and we refer to it as a \emph{treatment} variable, following the terminology in causal inference. The goal is to estimate the linear coefficient $\theta_0 = \beta_0\in \mathbb{R}$, while $\mu_0$ and $\pi_0$ are \emph{nuisance functions}. We write the sample size as $2n$ to accommodate sample splitting.

Motivated by modern statistical learning, we consider the \emph{structure-agnostic} setting, under which we are blind to any structure of $\mu_0$ and $\pi_0$ like additivity, H{\"o}lder smoothness, or (sparse) linearity, but know that they can be estimated well by a provided black-box machine learning (ML) model, represented as a hypothesis function class $\mathcal{G}$. Following the notations of statistical learning \citep{bartlett2005local, vladimir2006local, massart2006risk, wainwright2019high}, we use three quantities $(\delta_\mu^\appr, \delta_\pi^\appr, \delta^\stoc_n)$ to characterize how the black-box model $\mathcal{G}$ can estimate the nuisance functions. Denote $\|f\|_2 = (\mathbb{E}[f^2(X)])^{1/2}$, we use $\delta^\appr_h:= \inf_{g\in \mathcal{G}} \|g - h_0\|_2$ to denote the approximation error with respect to the ground-truth functions $h \in \{\mu, \pi\}$. We let $\delta_n^\stoc$ be the stochastic error of the function class $\mathcal{G}$ based on $n$ i.i.d. observations. It will be formally defined later and can be interpreted as the $L_2$ regression error when the ground-truth regression function belongs to $\mathcal{G}$. For example, a Vapnik–Chervonenkis (VC) \citep{vapnik1971uniform} class with VC dimension $D=o(n)$ has stochastic error $\delta_{n}^\stoc \lesssim \sqrt{D\log(en/D)/n}$. Applying the standard empirical process arguments, running least squares over $\mathcal{G}$ can estimate a regression function $h_0$ at the error rate $O(\delta_h^\appr + \delta_{n}^\stoc)$. For example, we have 
\[
\|\hat{\pi} - \pi_0\|_2 \lesssim \delta^{\appr}_\pi + \delta^{\stoc}_n \qquad \text{ for }\qquad \hat{\pi} = \argmin_{g\in \mathcal{G}} \frac{1}{n} \sum_{i=1}^n [T_i - g(X_i)]^2.\] 
Here, we explicitly disentangle $\delta^\appr$ and $\delta^\stoc$ in our formalization instead of using the combined $L_2$ estimation error. This is because they affect the estimation of $\beta_0$ differently. In particular, structure-aware methods can improve on DML by reducing the contribution of the stochastic error $\delta^\stoc$; see Section 1.3 of \cite{gu2026optimally} for a detailed discussion. 

This paper characterizes the best attainable performance for estimating the target $\theta_0 = \beta_0$ using a generic black-box ML class $\mathcal{G}$ with known error budgets $(\delta_\mu^\appr, \delta_\pi^\appr, \delta_n^\stoc)$. As a benchmark, DML first uses $\mathcal{G}$ to estimate the two nuisance functions either explicitly \citep{chernozhukov2017double, chernozhukov2018double, chernozhukov2022debiased, foster2023orthogonal} or implicitly \citep{chernozhukov2022automatic, chernozhukov2026adversarial} on one sample split and plugs $(\hat{\mu}, \hat{\pi})$ into a doubly robust \citep{robins1994estimation} estimator on another hold-out split, yielding
\begin{align*}
    |\hat{\theta}_{\mathtt{DML}} - \theta_0| &\lesssim n^{-1/2} + \|\hat{\mu} - \mu_0\|_2 \cdot \|\hat{\pi} - \pi_0\|_2 \\
    &\lesssim n^{-1/2} + \delta_\mu^\appr \cdot \delta_\pi^\appr + [\delta_n^\stoc]^2 + \max\{\delta_\pi^\appr, \delta_\mu^\appr\} \cdot \delta_n^\stoc.
\end{align*} where the second inequality substitutes the approximation and stochastic errors into the $L_2$ estimation error. 

Recently, \cite{gu2026optimally} showed that this rate can be improved for a generic black-box class $\mathcal{G}$. They proposed another estimator with error guarantees $n^{-1/2} + \delta_\mu^\appr + [\delta_n^\stoc]^2$, together with a matching lower bound in the regime of $\delta_\pi^\appr \asymp 1$, under which one cannot consistently estimate the treatment regression function. Their method removes the $\delta_\pi^\appr \cdot \delta_n^\stoc$ at the cost of inflating the bias $\delta_\mu^\appr \cdot \delta_\pi^\appr$ by $\delta_\mu^\appr$, leaving the optimal rate unresolved when $\delta_\pi^\appr = o(1)$. 

This paper proposes a novel method that, for any given black-box class $\mathcal{G}$, attains
\begin{align}
\label{eq:intro-error}
    |\hat{\theta}_{\ourone} - \theta_0| \lesssim n^{-1/2} + \delta_\mu^\appr \cdot \delta_\pi^\appr + [\delta_n^\stoc]^2
\end{align} A lower bound is also proved, showing that the above error rate is not improvable even if $\mathcal{G}$ is specialized as a sparse linear model with identity covariance. The one-learner analysis further reveals a calibration principle that is not specific to either one learner or the partial linear model. This principle underlies the general \our~procedure and the extensions developed later in the paper.

\subsection{New contributions}

\myparagraph{Optimally using a learner in partial linear models.} In \cref{sec:estimator}, we focus on the canonical one-learner partial linear model introduced in \cref{subsec:problem}. We present an estimator attaining the rate \eqref{eq:intro-error}, explain its rationale, and establish the upper bound result to be proved in \cref{sec:proof-main}. A matching lower bound is also established by restricting $\mathcal{G}$ to a sparse linear model with identity covariance. These theoretical results yield the three implications, extending \cite{gu2026optimally} to a broader regime in the \emph{structure-agnostic} setting. 

\begin{itemize}[itemsep=0pt, topsep=0pt]
\item[(a)] Regardless of which nuisance function is harder to learn, one can eliminate all mixed terms that are first-order in stochastic error, i.e., $(\delta_\mu^\appr + \delta_\pi^\appr) \cdot \delta_n^\stoc$, in the DML rate. 
\item[(b)] Consequently, asymptotic normality remains possible even if one nuisance function is only consistently estimated, provided the other is estimated well and appropriate under-smoothing is adopted. 
\item[(c)] In the one-learner regime, which is where most provable structure-aware improvements are proposed, structure agnosticism introduces no fundamental limit: the rate \eqref{eq:intro-error} is attainable for any black-box class $\mathcal{G}$ and remains unimprovable when $\mathcal{G}$ is restricted to a sparse linear class. 
\end{itemize}

The estimator adopts two key ideas. Firstly, it searches for debiasing weights that remain close to those induced by the black-box regression estimates while approximately satisfying the adversarial moment constraints introduced by \cite{gu2026optimally}. Second, when the outcome function is harder to estimate, it constructs the outcome-side debiasing weights by a cross-sample objective. The weights depend only on the covariates in the inference split, while their moment constraints are imposed using the other split. This preserves the required conditional independence and removes the mixed term $\delta_n^\stoc \cdot \delta_\mu^\appr$.

\myparagraph{TAME: A generic fine-tuning debiased procedure. } In \cref{sec:full-method}, we go beyond the one-learner setting and develop {\it\textsf{T}ransductive \textsf{A}dversarial \textsf{M}oment-calibrated \textsf{E}diting (\textsf{TAME})}, a post-prediction debiasing procedure that can be applied to arbitrary initial black-box estimates. We establish a generic oracle-type inequality and then specialize it to the two-learner setting. When the nuisance difficulties are imbalanced and the nuisance contribution dominates $n^{-1/2}$, the two-learner version of \our~attains a strictly faster rate than the DML rate. Its optimal model selection uses the standard bias-and-variance tradeoff for the harder nuisance and under-smooths the easier one. We illustrate these with the neural network learners. 

\myparagraph{Extensions beyond the partial linear model.} In \cref{sec:extension}, we extend the outcome-easier branch (\texttt{O}-branch) to general linear functional estimation and the treatment-easier branch (\texttt{R}-branch) to average treatment effect estimation. These results show that the principles underlying \our~are not limited to the partial linear model.

\subsection{Related works}

Many structure-aware estimation error results for partial linear models can be expressed in our one-learner setting under additional structural assumptions; see the translation of selected results in our language in \cref{tb:rate}. When the class $\mathcal{G} = \mathcal{G}_p^{\mathsf{lin}}$ to be used is a linear model with $p$ basis, so that $\delta_n^\stoc \asymp (p/n)^{1/2}$, \cite{donald1994series} show the joint least squares estimator $\hat{\beta}$ can estimate $\beta_0$ at the error rate $|\hat{\beta} - \beta_0| \lesssim n^{-1/2} + \delta_\mu^\appr \cdot  \delta_\pi^\appr + n^{-1} \sqrt{p}$ provided $p=o(n)$. \cite{robins2008higher} uses the higher-order influence function for estimating a more general nonlinear functional that reduces to $\beta_0$ in \eqref{eq:intro:model} and attains the same error without the requirement $p=o(n)$. 
These linear sieve methods exploit a known $p$-dimensional subspace and its projection operator, a structure that does not directly extend to the non-convex sparse linear model \citep{bradic2019minimax}.
When $\mathcal{G} = \mathcal{G}_{p,s}^{\mathsf{slin}}$ is a sparse linear model of ambient dimension $p$ and sparsity level $s$, its stochastic error is upper bounded by $(s\log(p)/n)^{1/2}$. As a special case of the capped $\ell_1$ approximate sparsity considered by \cite{zhang2014confidence}, the debiased Lasso estimator therein can estimate $\beta_0$ at the error rate $n^{-1/2} + n^{-1} s_\mu \log(p)$ when $\mu_0 \in \mathcal{G}_{p, s_\mu}^{\mathsf{slin}}$ is an exact $s_\mu$-sparse linear model and the $L_2$ projection of $\pi_0$ into the full linear $p$ basis, which is $s_\pi$-sparse, can at least be consistently estimated by Lasso, i.e., $s_\pi \log(p) = o(n)$. On the other hand, without assuming any sparsity conditions on $\mu_0$, the estimator in \cite{bradic2022testability} can estimate $\beta_0$ at the error rate $n^{-1/2} + n^{-1} s_\pi \log(p)$ when $\pi_0 \in \mathcal{G}_{p, s_\pi}^{\mathsf{slin}}$ is an exact $s_\pi$-sparse linear model. Our estimator recovers the above two sharp rates for partial linear model, which is the ``worst-case'' structure-aware models, and generalizes them in two aspects: we allow $\mathcal{G}$ to be any hypothesis class, and also give a tight dependence $\delta_\mu^\appr \cdot \delta_\pi^\appr$ on the misspecification error; see \cref{remark:slm}. 

\begin{table}
\begin{center}
\begin{tabular}{l|l|l|l}
\hline
\hline
Selected reference & $\mathcal{G}$ & Nuisance remainder & Regime\\
\hline
\cite{donald1994series} & Linear & $\delta^\appr_\mu \cdot \delta^\appr_\pi + n^{-1/2} \cdot \delta_n^\stoc$ & $\delta_n^\stoc = o(1)$ \\
\cite{robins2008higher} & Linear & $\delta^\appr_\mu \cdot \delta^\appr_\pi + n^{-1/2} \cdot \delta_n^\stoc$ & - \\
\hline
\cite{zhang2014confidence} & Sparse linear & $(\delta_n^\stoc)^2$ & $\delta_\mu^\appr = 0$\\
\cite{bradic2022testability} & Sparse linear & $(\delta_n^\stoc)^2$ & $\delta_\pi^\appr = 0$\\
\hline
\cite{chernozhukov2017double} & Generic & $ \delta^\appr_\mu \cdot \delta^\appr_\pi + (\delta_n^\stoc)^2 + (\delta^\appr_\mu \lor \delta^\appr_\pi) \cdot \delta_n^\stoc$ & - \\
\cite{gu2026optimally} & Generic & $\delta_\mu^\appr + [\delta_n^\stoc]^2$ & - \\
This paper & Generic & $\delta_\mu^\appr \cdot \delta_\pi^\appr + [\delta_n^\stoc]^2$ & -\\
\hline
\hline
\end{tabular}
\end{center}
\caption{Nuisance remainders from selected results for the partial linear model expressed in our notation; the common $n^{-1/2}$ term and regularity conditions are suppressed. The sparse linear rows display the result for exact $s$-sparse linear models used in our estimation error comparison rather than the full generality of the cited papers.}
\label{tb:rate}
\end{table}

Our constructed estimator combines two different estimators, handling the cases where either the treatment or outcome function is harder to learn separately. The estimator targeted at a harder treatment function is related to the idea of covariate balancing \citep{hellerstein1999imposing, imai2014covariate, zubizarreta2015stable, zhao2019covariate, ning2020robust, athey2018approximate, fan2022optimal}, or more generally, augmented minimax linear estimation for a convex class \citep{hirshberg2021augmented, bruns2025augmented, kong2025asymptotics}. 
The other estimator, which deals with a harder outcome function, uses a cross-sample objective for carefully controlling conditional independence structure and shares a similar spirit with the high-dimensional debiasing constructions of \cite{bradic2022testability, wang2024debiasd}. 
Other provable improvements over the DML benchmark fall broadly into two categories. The first is structure-aware: some methods exploit linear sieve structure \citep{robins2008higher, robins2016technical, liu2017semiparametric}, whereas others exploit sparse high-dimensional structure \citep{athey2018approximate, tan2020model, ning2020robust, wang2024debiasd, celentano2023challenges}. The second category relies on additional assumptions on the first-stage fits \citep{van2014targeted, bonvini2024doubly, van2024doubly}. By contrast, our method uses only the learnability of the black-box class, as characterized by its approximation and stochastic error budgets, to attain provable improvement.

\medskip
\noindent \textbf{Notations.} Denote $x\lor y = \max\{x, y\}$ and $x\land y = \min\{x, y\}$. In this paper, we use $\const$ to denote a universal constant that is related to the boundedness from below and above conditions. We use $a \lesssim b$, $b \gtrsim a$, or $a = O(b)$ if there exists some constant $C$ depending on $\const$ such that $a \le Cb$ for any $n \ge C$, and write $a \asymp b$ if $a\lesssim b$ and $a \gtrsim b$. Denote $a = o(b)$, $a \ll b$, $b\gg a$, if $a$ and $b$ changes with $n$ in that $\lim_{n\to \infty} a/b = 0$. We use $\poly(a) \in \mathbb{R}$ to denote a scalar that depends polynomially on $a$.

\section{Optimal one-learner TAME for the partial linear model}
\label{sec:estimator}

In this section, we propose the one-learner version of \our, referred to as \ourone, for estimating the linear coefficient $\beta_0$ in the partial linear model \eqref{eq:intro:model}. As a proof of concept for the generic \our~framework, \ourone~attains the structure-agnostic optimal rate \eqref{eq:intro-error} in the one-learner setting. 

\cref{subsec:setup} introduces the regularity condition and formally defines the approximation error and stochastic error for the hypothesis class $\mathcal{G}$. The method is then presented in \cref{subsec:method}, accompanied by the underlying rationale in \cref{subsec:rationale}. The upper bound and asymptotic normality results are established in \cref{sec:upper-bound}, with a matching lower bound result in \cref{sec:lb-plm}.

\subsection{Setup}
\label{subsec:setup}

We first impose a standard regularity condition for the partial linear model \eqref{eq:intro:model}. \cref{cond:reg-plm} (a) bounds the coefficient, nuisance functions, and noises, while (b) ensures that the residual $u=T-\pi_0(X)$ has nondegenerate variance, which allows $\beta_0$ to be identifiable.

\begin{condition}[Regularity conditions for partial linear model] 
\label{cond:reg-plm} There exists a constant $\const\ge 1$ such that:
    \begin{itemize}
    \item[(a)] The coefficient, nuisance functions, and noises are uniformly bounded: $|\beta_0| \le \const$, $\|\mu_0\|_\infty \lor \|\pi_0\|_\infty \le \const$, $|\varepsilon| \lor |u| \le \const$.
    \item[(b)] The treatment variable $T$ cannot be fully explained by $X$, i.e., $\mathbb{E}[u^2] \ge 1/\const$.
    \end{itemize}
\end{condition}

To formalize the stochastic error of a function class, we first define its localized population Rademacher complexity.

\begin{definition}[Localized population Rademacher complexity]
For a given radius $\delta>0$, function class $\mathcal{F}$ on $\mathcal{Z}$, and a probability distribution $\nu$ on $\mathcal{Z}$, define
\begin{align*}
    \mathsf{R}_{n,\nu}(\delta;\mathcal{F}) = \mathbb{E}_{Z,R}\left[\sup_{h\in \mathcal{F}, \|h\|_{L_2(\nu)} \le \delta} \left|\frac{1}{n} \sum_{i=1}^n R_i h(Z_i)\right|\right],
\end{align*} where $Z_1,\ldots, Z_n \in \mathcal{Z}$ are $n$ i.i.d. observations from $\nu$, and $R_1,\ldots, R_n$, independent of $Z_1,\ldots, Z_n$, are i.i.d. Rademacher variables taking values in $\{-1,+1\}$ with equal probability.
\end{definition} 

We next use this complexity to define the stochastic error of a uniformly bounded hypothesis class.
%We assume the ``black-box model'' $\mathcal{G}$ we used is uniformly bounded and has stochastic error upper bounded by $\delta_n^\stoc$ and encapsulate it as a definition for ease of use for the remaining sections. 
\begin{definition}[Hypothesis class with stochastic error $\bar{\delta}$] \label{def:hcs} 
Let $\mathcal{F}$ be a function class on domain $\mathcal{Z}$, and let $\nu$ be a probability distribution on $\mathcal{Z}$. Define
\begin{align}
     \delta^\stoc(\mathcal{F}, n, \nu) &:= \inf\left\{\delta_s>\sqrt{\frac{\log(n)}{n}}: \sup_{\delta \ge \delta_s} \frac{\mathsf{R}_{n,\nu}(\delta; \partial \mathcal{F})}{\const \cdot \delta} \le \delta_s\right\},
\label{eq:error-stoc-f}
\end{align} where $\partial \mathcal{F} := \{f - \tilde{f}: f, \tilde{f} \in \mathcal{F}\}$. We say $\mathcal{F}$ is a uniformly bounded class with stochastic error upper bounded by $\bar{\delta}$, denoted by $\mathcal{F} \in \mathcal{HCS}(\bar{\delta}; n, \nu)$, if $\|f\|_\infty \le \const$ for any $f\in \mathcal{F}$ and $\delta^\stoc(\mathcal{F}, n, \nu) \le \bar{\delta}$.
\end{definition}

\begin{condition}[Hypothesis class $\mathcal{G}$] \label{cond:g} Let $\nu_x$ be the distribution of $X$. Assume $\mathcal{G} \in \mathcal{HCS}(\delta_n^\stoc; n, \nu_x)$. Moreover, we define the approximation errors, i.e., $L_2$ misspecification errors, as 
\begin{align}
    \delta_{\mu}^\appr &:= \inf_{g\in \mathcal{G}} \|g - \mu_0\|_{L_2(\nu_x)} \qquad \text{and} \qquad  \delta_\pi^\appr = \inf_{g\in \mathcal{G}} \|g - \pi_0\|_{L_2(\nu_x)}
\label{eq:error-appr-g}.
\end{align}
\end{condition}

\begin{remark} For ease of presentation, we adopt three simplifications: (1) the coefficient, nuisance functions, noises, and hypothesis classes are uniformly bounded; (2) the local Rademacher complexity satisfies the bounds for any $\delta \ge \delta_n^\stoc$; and (3) the critical radius is at least $\sqrt{\log(n)/n}$.
The boundedness assumptions in (1) can be replaced by suitable sub-Gaussian tail conditions. For (2), if
$\bar{\mathcal{F}} = \partial\mathcal F$ is star-shaped, i.e., $\alpha f \in \bar{\mathcal{F}}$ for any $f\in \bar{\mathcal{F}}$ and $|\alpha| \le 1$, then \eqref{eq:error-stoc-f} reduces to the usual critical radius definition; see \citet{wainwright2019high}. Without star-shapedness, one may instead use the usual critical radius at the cost of slightly more complicated final bounds. (3) avoids repeatedly displaying negligible $\log(n)/n$ terms.
\end{remark}

\subsection{The debiased estimator for one learner}
\label{subsec:method}

For the observed data $\mathcal{D} = \{(X_i, T_i, Y_i)\}_{i=1}^{2n}$ with splits $\mathcal{D}_1 = \{(X_i, T_i, Y_i)\}_{i=1}^n$ and $\mathcal{D}_2 = \{(X_i, T_i, Y_i)\}_{i=n+1}^{2n}$, we first use $\mathcal D_2$ to obtain initial estimates. In particular, we run joint least squares for $(\beta_0,\mu_0)$ and least squares for $\pi_0$
\begin{align}
\label{eq:est1-step1}
\tag{S0}
\begin{split}
    (\hat{\beta},\hat{\mu}) \in \argmin_{(\beta,g) \in [-\const, \const]\times  \mathcal{G}} \frac{1}{n} \sum_{i=n+1}^{2n} \left(Y_i - \beta \cdot T_i - g(X_i)\right)^2, ~ 
    \hat{\pi} \in \argmin_{g\in \mathcal{G}} \frac{1}{n} \sum_{i=n+1}^{2n} \left(T_i - g(X_i)\right)^2,
\end{split}
\end{align} to get an initial estimate of $(\beta_0,\mu_0)$ and $\pi_0$, respectively. Using the standard empirical process argument, we have $\|\hat{\mu} - \mu_0\|_2 + |\hat{\beta} - \beta_0|\lesssim \delta_\mu^\appr + \delta_n^\stoc$ and $\|\hat{\pi} - \pi_0\|_2 \lesssim \delta_\pi^\appr + \delta_n^\stoc$ with high probability. 

%\begin{remark}
%The procedure till now is identical to that of the standard DML estimator, which ends up using the orthogonal score estimator \eqref{eq:est-dml} for estimating $\theta_0 = \beta_0$. Our method uses $\hat{\mu}, \hat{\pi}$ as an initial estimate and further performs a careful debiasing procedure as illustrated below. 
%\end{remark}

Our estimator requires knowledge of the ordering between $(\delta^\appr_\mu, \delta^\appr_\pi)$, and selects between the two estimators as follows:
\begin{align}
\label{eq:est1-est}
\hat{\theta} = \begin{cases}
    \hat{\theta}_\mu ~\text{in \eqref{eq:est1-case1-step3}}~ & \qquad \text{if} ~~\delta^\appr_\mu \le \delta^\appr_\pi,\\
    \hat{\theta}_\pi ~\text{in \eqref{eq:est1-case2-step3}}~ & \qquad \text{if} ~~\delta^\appr_\mu > \delta^\appr_\pi
\end{cases}
\end{align}
We let $\lambda> 0$ denote the tuning parameter in the minimax programs below; it should be tuned according to the full error budgets $(\delta_\mu^{\mathrm{appr}}, \delta_\pi^{\mathrm{appr}}, \delta_n^{\mathrm{stoc}})$ to attain the optimal rate.
\medskip

\noindent \underline{\sc Case 1: $\delta^\appr_\mu \le \delta^\appr_\pi$.}  Let $\tilde{a} = (\tilde{a}_1,\ldots, \tilde{a}_n)\in \mathbb{R}^n$ with $\tilde{a}_i =  [T_i - \hat{\pi}(X_i)] / \allowbreak [\{\frac{1}{n} \sum_{l=1}^n T_{l} (T_{l} - \hat{\pi}(X_{l}))\} \lor (2\const)^{-1}]$ be the debiasing weights induced by the black-box estimates. The lower truncation keeps the denominator in $\tilde{a}_i$ bounded away from zero. Here we first use $\mathcal D_1$ to construct empirical debiasing weights $\hat a=(\hat a_1,\ldots,\hat a_n)$ through the minimax program
\begin{align}
\label{eq:est1-case1-step2}
\tag{S1-$\mu$}
\begin{split}
    \hat{a} \in \argmin_{a\in \mathbb{R}^n} &~ \lambda \cdot Q(a, \tilde{a}) + \sup_{\beta \in \mathbb{R}, f\in \partial \mathcal{G}} \Phi_\mu(f, \beta, a) ~~~\text{where}~~ Q(v, v')=\frac{1}{n}\sum_{i=1}^n (v_i - v_i')^2,\\
    & \qquad \Phi_\mu(f, \beta, a) = \left| \frac{1}{n}\sum_{i=1}^n (\beta \cdot T_i + f(X_i)) \cdot a_i - \beta\right| - \frac{1}{n} \sum_{i=1}^n f^2(X_i).
\end{split} 
\end{align} For definiteness, if the objective in \eqref{eq:est1-case1-step2} does not admit a finite minimizer, we set \(\widehat a=0\). Conditional on the initial estimates from $\mathcal D_2$, $\hat a$ depends on $\mathcal{D}_1$ only through $\{(X_i,T_i)\}_{i=1}^n$, and not through $\{Y_i\}_{i=1}^n$. The estimate of $\theta_0 = \beta_0$ in this case is the following debiased estimator
\begin{align}
\label{eq:est1-case1-step3}
\tag{S2-$\mu$}
\hat{\theta}_\mu = \frac{1}{n} \sum_{i=1}^n \left(Y_i - \hat{\mu}(X_i)\right) \cdot \hat{a}_i. 
\end{align}

\noindent \underline{\sc Case 2: $\delta^\appr_\mu > \delta^\appr_\pi$.} Let $\tilde{m} = (\tilde{m}_1,\ldots, \tilde{m}_n) \in \mathbb{R}^n$ with $\tilde{m}_i = \hat{\mu}(X_i) + \hat{\beta} \cdot \hat{\pi}(X_i)$ be the debiasing weights induced by the black-box estimates in this case. We will use both $\mathcal D_1$ and $\mathcal{D}_2$ to construct empirical debiasing weights $\hat m=(\hat m_1,\ldots,\hat m_n)$ on $\mathcal{D}_1$ through the following minimax program,  
\begin{align}
\label{eq:est1-case2-step2}
\tag{S1-$\pi$}
\begin{split}
    \hat{m} \in \argmin_{m\in \mathbb{R}^n} &~\lambda \cdot Q(m, \tilde{m}) + \sup_{f \in \partial \mathcal{G}} \Phi_\pi(f, m), ~~~ \text{where}\\
    &\qquad ~\Phi_\pi(f, m) = \left|\frac{1}{n} \sum_{i=n+1}^{2n} Y_i f(X_i) - \frac{1}{n} \sum_{i=1}^n m_i f(X_i) \right| - \frac{1}{n} \sum_{i=1}^n f^2(X_i)
\end{split}
\end{align} Thus, $\widehat m$ depends on all of $\mathcal{D}_2$ and, conditional on $\mathcal{D}_2$, on $\mathcal{D}_1$ only through the covariates $\{X_i\}_{i=1}^n$. This differs from \eqref{eq:est1-case1-step2}, where $\hat a$ may also depend on the treatments $\{T_i\}_{i=1}^n$ in $\mathcal{D}_1$. This \emph{cross-sample} construction preserves the conditional mean-zero structure needed for the treatment noises $\{u_i\}_{i=1}^n$. The estimate of $\theta_0 = \beta_0$ in this case is the following debiased estimator
\begin{align}
\label{eq:est1-case2-step3}
\tag{S2-$\pi$}
\hat{\theta}_\pi = \left\{(2\const)^{-1} \lor \frac{1}{n} \sum_{i=1}^n [T_i - \hat{\pi}(X_i)]^2\right\}^{-1}\frac{1}{n} \sum_{i=1}^n \left(Y_i - \hat{m}_i\right) \left(T_i - \hat{\pi}(X_i)\right). 
\end{align} 

\subsection{Rationale}
\label{subsec:rationale}

To keep the presentation concise in this subsection, the inequalities involving ``$\lesssim$'' are understood to hold with high probability. Denote $\|v\|_n = (\frac{1}{n} \sum_{i=1}^n v_i^2)^{1/2}$ for an $n$-dimensional vector $v=(v_1,\ldots,v_n)\in\mathbb R^n$. We identify a function $f$ with its evaluation vector $f_{1:n}:=(f(X_1),\ldots,f(X_n))$ and accordingly write $\|f\|_n:=\|f_{1:n}\|_n$.

We first explain the rationale behind the estimator $\hat{\theta}_\mu$ by considering the regime where the denominator in $\tilde{a}_i$ exceeds $1/(2\const)$ so the lower bound truncation is inactive. This event occurs with high probability when $\hat{\pi}$ consistently estimates $\pi_0$; the formal analysis also covers the truncated case. 

At a high level, it uses the adversarial moment-calibration criterion from \cite{gu2026optimally} to fine-tune the debiasing weights in a neighbourhood of the weights $\tilde{a}$ used by the standard DML estimator, so that the resulting estimator benefits from both objectives. To see this, when $\lambda \to \infty$, we will have $\hat{a}_i \to \tilde{a}_i = (T_i - \hat{\pi}(X_i)) / \{\frac{1}{n} \sum_{l=1}^n T_l(T_l - \hat{\pi}(X_l))\}$, which coincides with the double machine learning estimator \eqref{eq:est-dml} when the weights $\hat{a}$ are plugged into \eqref{eq:est1-case1-step3}; when $\lambda \to 0$, it is similar to the minimax empirical debiasing estimator in \cite{gu2026optimally}, which is designed specifically to remove the first-order stochastic error at the cost of replacing $\delta_\mu^\appr \cdot \delta_\pi^\appr$ by $\delta_\mu^\appr$. In particular, our construction of the debiasing weights in \eqref{eq:est1-case1-step2} is akin to the following constrained formulation obtained via the Lagrange multiplier method
\begin{align}
\begin{split}
    \hat{a} \in &\argmin_{a \in \mathcal{A}} \frac{1}{n} \sum_{i=1}^n (a_i - \tilde{a}_i)^2 ~~ \text{where} \\
    & \mathcal{A} = \Bigg\{ a \in \mathbb{R}^n: \frac{1}{n} \sum_{i=1}^n a_i T_i = 1, \sup_{f \in \partial \mathcal{G}} \left|\frac{1}{n} \sum_{i=1}^n f(X_i) \cdot a_i \right|
    - \frac{1}{n} \sum_{i=1}^n f^2(X_i) \le \tilde{C} (\delta^\stoc_n)^2\Bigg\}
\end{split}
\label{eq:plm-constrained-mu}
\end{align} for a large enough constant $\tilde{C} = \poly(\const)$. The moment inequality in $\mathcal{A}$ can be viewed as a black-box analogue of a Dantzig-type \citep{candes2007dantzig} feasibility constraint. 

Now, let us briefly explain why the weights in \eqref{eq:plm-constrained-mu}, plugged into \eqref{eq:est1-case1-step3} can yield the upper bound $n^{-1/2} + \delta_\mu^\appr \cdot \delta_\pi^\appr + [\delta_n^\stoc]^2$ when $\delta_\mu^\appr \le \delta_\pi^\appr$. Let $\bar{D} = \frac{1}{n}\sum_{l=1}^n T_l (T_l - {\pi}_0(X_l))$. Since $\bar{D}$ is an average of i.i.d. random variables with mean $\mathbb{E}[u^2] \ge 1/\const$, we have $\bar{D} \gtrsim 1$ for large $n$. Moreover, the ``oracle weights'' $\bar{a} \in \mathbb{R}^n$ with $\bar{a}_i = [T_i - \pi_0(X_i)]/ \bar{D} = u_i / \bar{D}$ lie in the feasible set $\mathcal{A}$ in \eqref{eq:plm-constrained-mu} with high probability; see the detailed justification in Section 2.2 of \cite{gu2026optimally}. Thus, the feasible set $\mathcal{A}$ is non-empty with high probability, and its constraints apply to the solution $\hat{a}$. 

We pick fixed $\bar{g}_\mu \in \mathcal{G}$ such that $\|\bar{g}_\mu - \mu_0\|_2 \le 2\delta_\mu^\appr$. It follows from the model \eqref{eq:intro:model} and the constraint $\frac{1}{n} \sum_{i=1}^n \hat{a}_i T_i = 1$ that
\begin{align*}
    \hat{\theta}_\mu - \theta_0 &= \frac{1}{n} \sum_{i=1}^n (\varepsilon_i + \beta_0 \cdot T_i + \mu_0(X_i) - \hat{\mu}(X_i)) \cdot \hat{a}_i - \beta_0 \\
    &= \underbrace{\frac{1}{n} \sum_{i=1}^n \varepsilon_i \hat{a}_i}_{\mathsf{T}_{\mu, n}} + \underbrace{\frac{1}{n} \sum_{i=1}^n \left(\mu_0 - \bar{g}_\mu\right)(X_i) \cdot \hat{a}_i}_{\mathsf{T}_{\mu, a}} + \underbrace{\frac{1}{n} \sum_{i=1}^n \left(\bar{g}_\mu-\hat{\mu}\right)(X_i) \cdot \hat{a}_i}_{\mathsf{T}_{\mu, s}}. 
\end{align*}
As shown below $\|\hat{a}\|_n \lesssim 1$. Moreover, conditioned on $\mathcal{D}_2$ and $\{(X_i, T_i)\}_{i=1}^n$, the weights $\hat{a}$ are fixed and $\varepsilon_i$ are independent with conditional zero mean. Hence $|\mathsf T_{\mu, n}|\lesssim n^{-1/2}$. 

The minimization of the empirical $L_2$ error between our weights $\hat{a}$ and the weights $\tilde{a}$ given by the black-box estimate $\hat{\pi} \in \mathcal{G}$ will help minimize the error in $\mathsf{T}_{\mu, a}$. To see this, by the minimization objective in \eqref{eq:plm-constrained-mu}, we have $\|\hat{a} - \tilde{a}\|_n \le \|\bar{a} - \tilde{a}\|_n$. It is easy to verify that $\|\tilde{a} - \bar{a}\|_n \lesssim \|\hat{\pi} - \pi_0\|_2 \lesssim \delta_n^\stoc + \delta_\pi^\appr$. This means our debiasing weights $\hat{a}$ are also close to the ``oracle weights'' $\bar{a}$ in a similar manner to $\tilde{a}$: $\|\hat{a} - \bar{a}\|_n \le \|\hat{a} - \tilde{a}\|_n + \|\bar{a} - \tilde{a}\|_n \le 2\|\bar{a} - \tilde{a}\|_n \lesssim \delta_n^\stoc + \delta_\pi^\appr$. Since $\{u_i\}_{i=1}^n$ are independent zero-mean random variables conditioned on $\{X_i\}_{i=1}^n$, it thus follows from the triangle inequality and the Cauchy-Schwarz inequality that
\begin{align*}
    |\mathsf{T}_{\mu, a}| &\le \left|\frac{1}{n} \sum_{i=1}^n (\mu_0 - \bar{g}_\mu)(X_i) \cdot u_i\right|  / |\bar{D}| + \|\mu_0 - \bar{g}_\mu\|_n \cdot \|\hat{a} - \bar{a}\|_n \\
    &\lesssim \frac{1}{\sqrt{n}} + \delta_\mu^\appr\cdot\left( \delta_\pi^\appr + \delta^\stoc_n \right) \lesssim \frac{1}{\sqrt{n}} + \delta_\mu^\appr \cdot \delta_\pi^\appr + [\delta^\stoc_n ]^2,
\end{align*} where the last inequality follows from $\delta_\mu^\appr \cdot \delta_n^\stoc \le [\delta_\mu^\appr]^2 + [\delta_n^\stoc]^2 \le \delta_\mu^\appr \cdot \delta_\pi^\appr + [\delta_n^\stoc]^2$ as $\delta_\mu^\appr \le \delta_\pi^\appr$. 

The adversarial moment constraint with respect to $\partial \mathcal{G}$ in $\mathcal{A}$ will make the term $\mathsf{T}_{\mu, s}$ scale quadratically in $(\delta_\mu^\appr + \delta_n^\stoc)$, which explicitly removes the $\delta_n^\stoc \cdot \delta_\pi^\appr$ error in the DML product rate when $\delta_\pi^\appr \gg \delta_\mu^\appr$. To see this, given $\hat a\in\mathcal A$, the following instance-dependent error bound holds
\begin{align} \label{eq:inst-case1}
    \forall g, \tilde{g} \in \mathcal{G}, \qquad \left|\frac{1}{n} \sum_{i=1}^n (g - \tilde{g})(X_i) \cdot \hat{a}_i \right| \le \|g - \tilde g\|_n^2 + \tilde{C} (\delta_n^\stoc)^2. 
\end{align} Applying this with $(g,\tilde{g})=(\bar{g}_\mu, \hat \mu)$ gives $|\mathsf{T}_{\mu,s}| \lesssim \|\hat{\mu} - \bar{g}_\mu\|_n^2 + (\delta_n^\stoc)^2 \lesssim [\delta_\mu^\appr + \delta_n^\stoc]^2$. The squared approximation error $[\delta_\mu^\appr]^2$ is absorbed into $\delta_\mu^\appr \cdot \delta_\pi^\appr$ given the regime $\delta_\mu^\appr \le \delta_\pi^\appr$ under consideration. Combining the error bounds on $\mathsf{T}_{\mu,n}$, $\mathsf{T}_{\mu,a}$, and $\mathsf{T}_{\mu, s}$ yields the stated rate. 

\begin{remark}[Combining two objectives] For linear \citep{bruns2025augmented} and sparse linear classes \citep{zhang2014confidence, van2014asymptotically}, regression risk minimization can be equivalent to imposing the moment constraints with respect to the adopted hypothesis class, by the minimax duality \citep{hirshberg2021augmented} and the KKT condition \citep{bickel2009simutaneous}, respectively. Such an equivalence need not hold for a generic, possibly non-convex hypothesis class $\mathcal{G}$. Our estimator thus combines the centered ridge penalty $Q(\hat{a}, \tilde{a})$ for deviating from the regression-induced weights with an adversarial moment constraint, rather than relying on either objective to imply the other. 
\end{remark}

For {\sc Case 2}, we will also adopt the idea of using flexible weights to eliminate the potential $\delta_\mu^\appr \cdot \delta_n^\stoc$ error term, but an additional challenge occurs here: the outcome-side debiasing weights must be constructed without using the treatment residuals in the inference sample $\mathcal{D}_1$. Otherwise, the term involving $u_i$ is no longer conditionally mean zero. At first glance, this requirement conflicts with using the outcomes $Y$ in the inference sample $\mathcal{D}_1$ to impose the desired moment constraints.

To see why the construction in {\sc Case 1} cannot be directly mirrored, let us consider the natural DML-form estimator $\hat{\theta} = \frac{1}{n} \sum_{i=1}^n (Y_i - m_i)(T_i - \hat{\pi}(X_i)) / \frac{1}{n} \sum_{i=1}^n T_i (T_i - \hat{\pi}(X_i))$ with the candidate debiasing weights $m=(m_1,\ldots,m_n)\in\mathbb R^n$. Picking a fixed $\bar{g}_\pi \in \mathcal{G}$ such that $\|\bar{g}_\pi - \pi_0\|_2 \le 2 \delta_\pi^\appr$ and expanding $\hat{\theta} - \theta_0$, we need to control the two terms below that share similar spirits to $\mathsf{T}_{\mu, s}$ and $\mathsf{T}_{\mu, n}$, respectively, in {\sc Case 1}:
\begin{align*}
    \underbrace{\frac{1}{n} \sum_{i=1}^n (\mu_0(X_i) - m_i) \left(\bar{g}_\pi - \hat{\pi}\right)(X_i)}_{\breve{\mathsf{T}}_{\pi, s}} + \underbrace{\frac{1}{n} \sum_{i=1}^n (\mu_0(X_i) - m_i) \cdot u_i}_{\breve{\mathsf{T}}_{\pi, n}} 
\end{align*}
One could calibrate $m$ on $\mathcal{D}_1$ to control $\breve{\mathsf{T}}_{\pi,s}$. A same-sample analogue would impose the moment constraints using the residualized outcomes $Y - \hat{\beta} T$. These residualized outcomes will depend on $u_i$ through $T_i = \pi_0(X_i) + u_i$, making $m$ depend on the treatment residuals $\{u_i\}_{i=1}^n$ in $\mathcal{D}_1$. Consequently, $\breve{\mathsf{T}}_{\pi,n}$ would no longer be an average of conditionally zero-mean random variables.

Our construction avoids this conflict by changing both the target of the outcome-side values and the sample used for calibration. The optimized debiasing weight $\hat{m}_i$ will instead approximate $\bar{m}_i =\mathbb{E}[Y|X_i]= \mu_0(X_i)+\beta_0\pi_0(X_i)$, and the required moment constraints are learned from $\mathcal D_2$, while $\hat m$ depends on $\mathcal D_1$ only through its covariates $\{X_i\}_{i=1}^n$. The high-level ideas behind the design of the objective \eqref{eq:est1-case2-step2} are twofold. First, the empirical $L_2$ norm $\|\bar{m} - \hat{m}\|_n$ should be small. This is done by minimizing $\|\hat{m} - \tilde{m}\|_n$ using the initial estimate $\tilde{m}_i = \hat{\beta} \cdot \hat{\pi}(X_i) + \hat{\mu}(X_i)$ satisfying $\|\tilde{m} - \bar{m}\|_n \lesssim \delta_\mu^\appr + \delta_n^\stoc$. Second, minimizing $\sup_{f \in \partial \mathcal{G}} \Phi_\pi(f, m)$ w.r.t. $m$ will impose the instance-dependent error bound $|\frac{1}{n} \sum_{i=1}^n (\bar{m}_i - \hat{m}_i)(g - \tilde{g})(X_i)| \lesssim \|g - \tilde{g}\|_n^2 + (\delta_n^\stoc)^2$ for any $g, \tilde{g} \in \mathcal{G}$, analogous to \eqref{eq:inst-case1}. 

After separating the semiparametric efficient term and the standard higher-order terms arising from the estimated treatment regression and denominator, the three additional dominant terms are
\begin{align*}
    \Bigg|\underbrace{\frac{1}{n} \sum_{i=1}^n (\bar{m}_i - \hat{m}_i) \cdot u_i }_{\mathsf{T}_{\pi, n}} \Bigg| + \Bigg| \underbrace{\frac{1}{n} \sum_{i=1}^n \left(\bar{m}_i - \hat{m}_i\right) \cdot (\pi_0 - \bar{g}_\pi)(X_i) }_{\mathsf{T}_{\pi, a}} \Bigg| + \Bigg|\underbrace{\frac{1}{n} \sum_{i=1}^n \left(\bar{m}_i - \hat{m}_i\right) \cdot (\bar{g}_\pi - \hat{\pi})(X_i) }_{\mathsf{T}_{\pi, s}} \Bigg|.
\end{align*}
Similarly, the minimization of $\|\hat{m} - \tilde{m}\|_n$ will make $|\mathsf{T}_{\pi, a}| \lesssim n^{-1/2} + \delta_\mu^\appr \cdot \delta_\pi^\appr + [\delta^\stoc_n ]^2$, while the minimization of adversarial objective controls the term $|\mathsf{T}_{\pi, s}| \lesssim [\delta_\pi^\appr + \delta_n^\stoc]^2$, which explicitly eliminates the sub-optimal term $\delta_\mu^\appr \cdot \delta_n^\stoc$ in the DML rate. At the same time, the \emph{cross-sample} construction of the weights $\{\hat{m}_i\}_{i=1}^n$ make them dependent on the split $\mathcal{D}_1$ only through $\{X_i\}_{i=1}^n$, thus $\mathsf{T}_{\pi, n}$ continues to be the sum of independent zero-mean random variables conditioned on fixed $(\{X_i\}_{i=1}^n, \mathcal{D}_2)$ in a similar spirit to $\mathsf{T}_{\mu, n}$ for $\hat{\theta}_\mu$. 

\subsection{Upper bound and asymptotic normality}
\label{sec:upper-bound}

Recall the partial linear model in \eqref{eq:intro:model}, and define the first-order oracle term as 
\begin{align}
\label{eq:psi-plm}
    \hat{\varphi}_{\mathsf{PLM}} = \frac{1}{n} \sum_{i=1}^n \varepsilon_i \cdot w_i \qquad \text{with} \qquad w_i = u_i / \mathbb{E}[u^2]
\end{align} Under the additional conditional homoskedasticity assumption $\mathbb{E}[\varepsilon^2\mid X,T]\equiv \sigma_\varepsilon^2$, this is the empirical average of the semiparametric efficient influence function.

The following theorem provides an oracle-type inequality for any given hyperparameter $\lambda \in (0, 1]$.

%\begin{blackframe}
\begin{theorem}[Oracle-type inequality]\label{thm:main-plm}
Consider the partial linear model \eqref{eq:intro:model} satisfying \cref{cond:reg-plm} and the black-box model class $\mathcal{G}$ satisfying \cref{cond:g}. Then there exists a constant $\tilde{C} = \poly(\const)$, if $\delta_n^\stoc \cdot \tilde{C} \le 1$, and $t\in [1, n/\tilde{C}]$, then with probability at least $1-2e^{-t}-2e^{-n (\delta_n^\stoc)^2}$, our estimator $\hat{\theta}$ in \eqref{eq:est1-est} with $\lambda \in [(\delta_n^\stoc)^2, 1]$ satisfies
\begin{align*}
    \tilde{C}^{-1} |\hat{\theta} - \theta_0 - \hat{\varphi}_{\mathsf{PLM}}| \le \delta_\mu^\appr \cdot \delta_\pi^\appr + (\delta_n^\stoc)^2 + \left(\lambda \cdot (\delta_{\max}^\appr)^2 + \frac{\delta_n^\stoc \cdot \delta_{\min}^\appr}{\sqrt{\lambda}}\right) + \mathsf{Res},
\end{align*} where $\delta_{\max}^\appr = \delta_\mu^\appr \lor \delta_\pi^\appr$, $\delta_{\min}^\appr = \delta_\mu^\appr \land \delta_\pi^\appr$, and 
\begin{align*}
    \mathsf{Res} := \sqrt{\frac{t}{n}} \cdot \left[\delta_{\max}^\appr + \frac{\delta_n^\stoc}{\sqrt{\lambda}} + \sqrt{\frac{t}{n}} \right]
\end{align*}is a high-order remainder: for fixed $t\ge 1$, $\mathsf{Res}=o(n^{-1/2})$ as long as $\delta_{\max}^\appr = o(1)$ and $\lambda \gg (\delta_n^\stoc)^2$.
\end{theorem}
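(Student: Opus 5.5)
The plan is to treat the two branches of \eqref{eq:est1-est} separately and, in each, write $\hat\theta-\theta_0-\hat\varphi_{\mathsf{PLM}}$ as a sum of the terms isolated in \cref{subsec:rationale}. We work on a high-probability event $\mathcal E$ on which four things hold. First, the least-squares fits in \eqref{eq:est1-step1} satisfy $\|\hat\mu-\mu_0\|_2+|\hat\beta-\beta_0|\lesssim\delta_\mu^\appr+\delta_n^\stoc$ and $\|\hat\pi-\pi_0\|_2\lesssim\delta_\pi^\appr+\delta_n^\stoc$. Second, the empirical and population norms are comparable uniformly over $\partial\mathcal G$: $\|f\|_n^2\le 2\|f\|_2^2+(\delta_n^\stoc)^2$ and conversely, via the localized Rademacher bound in \cref{def:hcs} and Talagrand's inequality, which costs $2e^{-n(\delta_n^\stoc)^2}$. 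Third, for the fixed functions $\bar g_\mu,\bar g_\pi$ and the oracle quantities, Bernstein-type concentration holds with deviation $\sqrt{t/n}$, costing $2e^{-t}$. Fourth, $\bar D\gtrsim 1$ and the truncations in $\tilde a$ and \eqref{eq:est1-case2-step3} are inactive; otherwise a crude bound is used.

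\textbf{Deterministic analysis of the penalized minimax program.} In Case~1, let $F(a)=\lambda Q(a,\tilde a)+\sup_{\beta,f}\Phi_\mu(f,\beta,a)$. Since $\Phi_\mu$ is positively homogeneous in $\beta$ and equals $0$ at $\beta=0,f=0$, finiteness of the sup forces $\frac1n\sum\hat a_iT_i=1$. So the sup reduces to $S(a):=\sup_{f}\{|\frac1n\sum f(X_i)a_i|-\|f\|_n^2\}\ge 0$.

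We compare $\hat a$ with the oracle $\bar a=u/\bar D$. On $\mathcal E$ we have $S(\bar a)\lesssim(\delta_n^\stoc)^2$, as in Section~2.2 of \cite{gu2026optimally}. A small correction restores $\frac1n\sum \bar a_iT_i=1$ exactly, costing $O(\sqrt{t/n}\,)$ in the moment. Then $F(\hat a)\le F(\bar a)$ gives two bounds:
\[
\|\hat a-\tilde a\|_n^2\le\|\bar a-\tilde a\|_n^2+\lambda^{-1}C(\delta_n^\stoc)^2,
\qquad
S(\hat a)\le \lambda(\delta_\pi^\appr+\delta_n^\stoc)^2+C(\delta_n^\stoc)^2.
\]
Hence $\|\hat a-\bar a\|_n\lesssim\delta_\pi^\appr+\delta_n^\stoc/\sqrt\lambda$. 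The instance bound \eqref{eq:inst-case1} also holds with slack $\lambda(\delta_{\max}^\appr)^2+(\delta_n^\stoc)^2$.

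Plugging these in:
\begin{itemize}
\item $\mathsf T_{\mu,s}\lesssim\|\hat\mu-\bar g_\mu\|_n^2+\lambda(\delta^\appr_{\max})^2+(\delta_n^\stoc)^2$, and $[\delta_\mu^\appr]^2\le\delta_\mu^\appr\delta_\pi^\appr$ in this case.
\item $\mathsf T_{\mu,a}\le \sqrt{t/n}\,\delta_\mu^\appr+\delta_\mu^\appr(\delta_\pi^\appr+\delta_n^\stoc/\sqrt\lambda)$, which yields the $\delta_{\min}^\appr\delta_n^\stoc/\sqrt\lambda$ term.
\item $\mathsf T_{\mu,n}-\hat\varphi_{\mathsf{PLM}}=\frac1n\sum\varepsilon_i(\hat a_i-w_i)$. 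Conditionally on $(\mathcal D_2,\{X_i,T_i\})$ this is a mean-zero sum with variance $\lesssim\|\hat a-w\|_n^2/n$. It is therefore $\lesssim\sqrt{t/n}(\delta_{\max}^\appr+\delta_n^\stoc/\sqrt\lambda+\sqrt{t/n})$, giving $\mathsf{Res}$; here $\|\bar a-w\|_n\lesssim\sqrt{t/n}$.
\end{itemize}

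\textbf{Case 2.} We repeat the argument with $\bar m_i=\mu_0(X_i)+\beta_0\pi_0(X_i)$. The key point is that $\frac1n\sum_{i>n}Y_if(X_i)$ estimates $\mathbb E[\bar m f]$. So $\sup_f\Phi_\pi(f,\bar m)\lesssim(\delta_n^\stoc)^2$ on $\mathcal E$, after symmetrizing the cross-sample difference over $\partial\mathcal G$ and using the norm equivalence.

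The comparison $F(\hat m)\le F(\bar m)$ then gives two bounds. The first is $\|\hat m-\bar m\|_n\lesssim\delta_\mu^\appr+\delta_n^\stoc/\sqrt\lambda$, using $\|\tilde m-\bar m\|_n\lesssim\delta_\mu^\appr+\delta_n^\stoc$. The second is the instance bound on $\frac1n\sum(\bar m_i-\hat m_i)f(X_i)$, by the triangle inequality through the $\mathcal D_2$ moment.

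We then expand \eqref{eq:est1-case2-step3}. Write $Y_i-\hat m_i=\beta_0u_i+\varepsilon_i+(\bar m_i-\hat m_i)$ and $T_i-\hat\pi=u_i+(\pi_0-\hat\pi)$. This yields the following pieces:
\begin{itemize}
\item $\mathsf T_{\pi,n}$, which is conditionally mean-zero given $(\mathcal D_2,X_{1:n})$ since $\hat m$ depends only on these, giving $\mathsf{Res}$;
\item $\mathsf T_{\pi,a}$ and $\mathsf T_{\pi,s}$, bounded as in Case 1 with the roles of $\mu$ and $\pi$ swapped;
\item the cross terms $\frac1n\sum\varepsilon_i(\pi_0-\hat\pi)$ and $\frac1n\sum u_i(\pi_0-\hat\pi)$, together with the denominator error $\frac1n\sum(T-\hat\pi)^2-\mathbb E u^2$, which contribute $\sqrt{t/n}(\delta_\pi^\appr+\delta_n^\stoc)+\|\hat\pi-\pi_0\|_n^2$.
\end{itemize}
The $\beta_0u_i$ parts cancel against the normalization up to the denominator error.

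\textbf{Main obstacle.} We expect the hardest step to be the uniform cross-sample comparison in Case 2, namely bounding
\[
\sup_{f\in\partial\mathcal G}\Big\{\Big|\tfrac1n\sum_{i>n}Y_if(X_i)-\tfrac1n\sum_{i\le n}\bar m_if(X_i)\Big|-\|f\|_{n}^2\Big\}\lesssim(\delta_n^\stoc)^2
\]
with probability $1-e^{-n(\delta_n^\stoc)^2}$, where $\|f\|_n$ is measured on $\mathcal D_1$. We would peel over shells $\|f\|_2\in[2^k\delta_n^\stoc,2^{k+1}\delta_n^\stoc]$, apply Talagrand with the condition $\mathsf R_n(\delta)\le C\delta\delta_n^\stoc$ in each shell, and absorb the linear-in-$\|f\|$ fluctuation into $\|f\|_n^2$ by AM-GM. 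The same peeling gives the oracle feasibility in Case 1.
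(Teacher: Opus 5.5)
Your proposal is correct and follows essentially the same route as the paper. In each branch you compare the penalized objective at $\hat a$ (resp.\ $\hat m$) with the oracle weights $\bar a$ (resp.\ $\bar m$) to get both $\|\cdot\|_n$-closeness and the instance-dependent moment bound, then split the error into a conditionally mean-zero term, a Cauchy--Schwarz approximation term, and an instance-bound stochastic term, with $\mathsf{H}_\pi(\bar m)\lesssim(\delta_n^\stoc)^2$ obtained from uniform localized concentration plus Young's inequality. Two small simplifications: with $\bar D=\frac1n\sum_l T_lu_l$ the oracle weights satisfy $\frac1n\sum_i\bar a_iT_i=1$ exactly, so no normalization correction is needed; and the truncation in $\tilde a$ needs no crude fallback (a trivial final bound would not suffice in Case 1 when $\delta_\mu^\appr$ is small), because $\|\tilde a-\bar a\|_n\lesssim\delta_\pi^\appr+\delta_n^\stoc+\sqrt{t/n}$ holds whether or not the truncation is active.
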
 
%\end{blackframe}

\cref{thm:main-plm} will be proved in \cref{sec:proof-main}. The two terms dependent on $\lambda$ quantify the trade-off between keeping the edited weights close to the initial black-box estimates and enforcing the adversarial moment constraints. Choosing $\lambda$ as in \cref{coro:error-plm} balances the two terms and gives the desired error bound \eqref{eq:intro-error}. Under the stronger conditions in the corollary, the same choice also yields asymptotic normality. 
\begin{corollary}\label{coro:error-plm}
Under \cref{cond:reg-plm} and \cref{cond:g}, for any $t\ge 1$, with probability at least $1-3e^{-t}-2e^{-n (\delta_n^\stoc)^2}$,
\begin{align*}
    |\hat{\theta} - \theta_0| \lesssim \sqrt{\frac{t}{n}} + \delta_\mu^\appr \cdot \delta_\pi^\appr + (\delta_n^\stoc)^2 \qquad \text{with} \qquad \lambda = \left(\frac{\delta_n^\stoc}{\delta_{\max}^\appr + \delta_n^\stoc}\right)^2.
\end{align*} 
Furthermore, if $\delta_\mu^\appr \cdot \delta_\pi^\appr = o(n^{-1/2})$, $\delta_n^\stoc = o(n^{-1/4})$, and $\delta^\appr_{\max} = o(1)$, then the same choice of $\lambda$ gives $\hat{\theta} - \theta_0 = \hat{\varphi}_{\mathsf{PLM}} + o_P(n^{-1/2})$. 
\end{corollary}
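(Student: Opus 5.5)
The corollary follows from \cref{thm:main-plm} by substituting the stated $\lambda$. Write $\delta_s=\delta_n^\stoc$ and $\Delta=\delta_{\max}^\appr+\delta_s$, so that $\lambda=(\delta_s/\Delta)^2$.

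\textbf{Admissibility of $\lambda$ and the side conditions.} First, $\lambda\le 1$, and since $\Delta\le 2\const$ under bounded classes, $\lambda \ge \delta_s^2/(4\const^2)$. The theorem requires $\lambda\ge\delta_s^2$. I would handle the small constant gap either by noting that the proof of \cref{thm:main-plm} only uses $\lambda\gtrsim\delta_s^2$, or by rescaling $\lambda$ by a constant, which changes the bound only by constants. The conditions $\tilde C\delta_s\le 1$ and $t\le n/\tilde C$ can be assumed without loss of generality. If either fails, the right-hand side $\sqrt{t/n}+\delta_s^2$ is bounded below by a constant. Meanwhile $|\hat\theta|\lesssim 1$: in Case 2 the denominator is truncated, and in Case 1 $\hat a$ is bounded via $Q$ (or I would simply clip $\hat\theta$ to $[-\const,\const]$). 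So the claim holds trivially in that regime.

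\textbf{Plugging in.} With this $\lambda$:
\begin{itemize}
\item $\lambda(\delta_{\max}^\appr)^2=\delta_s^2(\delta_{\max}^\appr/\Delta)^2\le\delta_s^2$.
\item $\delta_s\delta_{\min}^\appr/\sqrt\lambda=\delta_{\min}^\appr\Delta\le\delta_\mu^\appr\delta_\pi^\appr+\delta_{\min}^\appr\delta_s$. The last term satisfies $\delta_{\min}^\appr\delta_s\le(\delta_{\min}^\appr)^2+\delta_s^2\le\delta_\mu^\appr\delta_\pi^\appr+\delta_s^2$.
\item In $\mathsf{Res}$, $\delta_s/\sqrt\lambda=\Delta\lesssim 1$, and $\sqrt{t/n}\le 1$. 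Hence $\mathsf{Res}\lesssim\sqrt{t/n}$.
\end{itemize}
It remains to bound the oracle term $\hat\varphi_{\mathsf{PLM}}$. It is an average of bounded i.i.d. mean-zero variables, since $|w_i|\le\const^2$. Hoeffding's inequality gives $|\hat\varphi_{\mathsf{PLM}}|\lesssim\sqrt{t/n}$ with probability at least $1-e^{-t}$ (or $2e^{-t}$; constants can be adjusted). A union bound with the event of \cref{thm:main-plm} yields the stated probability $1-3e^{-t}-2e^{-n\delta_s^2}$.

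\textbf{Asymptotic normality.} Fix $t$ large, and take the remainder bound from \cref{thm:main-plm} minus $\hat\varphi_{\mathsf{PLM}}$. The terms are controlled as follows:
\begin{itemize}
\item $\delta_\mu^\appr\delta_\pi^\appr=o(n^{-1/2})$ and $\delta_s^2=o(n^{-1/2})$ by assumption.
\item The $\lambda$-terms are bounded, as above, by $\delta_s^2+\delta_\mu^\appr\delta_\pi^\appr+\delta_{\min}^\appr\delta_s$. The cross term $\delta_{\min}^\appr\delta_s$ is the delicate one. The bound $\delta_{\min}^\appr\delta_s\le(\delta_{\min}^\appr)^2+\delta_s^2$ together with $(\delta_{\min}^\appr)^2\le\delta_\mu^\appr\delta_\pi^\appr$ shows it is $o(n^{-1/2})$.
\item $\mathsf{Res}=\sqrt{t/n}\,[\delta_{\max}^\appr+\Delta+\sqrt{t/n}]=o(n^{-1/2})$, because $\Delta=\delta_{\max}^\appr+\delta_s=o(1)$.
\end{itemize}
This is exactly where $\delta_{\max}^\appr=o(1)$ is needed. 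Note that the theorem's stated condition $\lambda\gg\delta_s^2$ fails here, but the direct computation $\delta_s/\sqrt\lambda=\Delta$ bypasses it.

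Since the failure probability $3e^{-t}+2e^{-n\delta_s^2}$ can be made arbitrarily small (and $n\delta_s^2\ge\log n\to\infty$), we conclude $\hat\theta-\theta_0-\hat\varphi_{\mathsf{PLM}}=o_P(n^{-1/2})$. The main obstacle is the bookkeeping around the admissible range of $\lambda$, including its constant factor, and around the trivial regimes. The rest is direct substitution.
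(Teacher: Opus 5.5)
Your proposal is correct and follows the paper's (implicit) argument: substitute $\lambda=(\delta_n^\stoc/\Delta)^2$ into \cref{thm:main-plm}, absorb the cross term via $\delta_{\min}^\appr\delta_n^\stoc\le(\delta_{\min}^\appr)^2+(\delta_n^\stoc)^2\le\delta_\mu^\appr\delta_\pi^\appr+(\delta_n^\stoc)^2$, bound $\hat\varphi_{\mathsf{PLM}}$ by Hoeffding (the source of the extra $e^{-t}$), and take a union bound. Two small corrections. First, in the asymptotic-normality regime $\Delta=o(1)$, so $\lambda/(\delta_n^\stoc)^2=\Delta^{-2}\to\infty$; the theorem's condition $\lambda\gg(\delta_n^\stoc)^2$ therefore holds (as does $\lambda\ge(\delta_n^\stoc)^2$ exactly for large $n$), rather than failing. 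Second, rely on your ``the proof only uses $\lambda\gtrsim(\delta_n^\stoc)^2$'' route, not on rescaling $\lambda$ or clipping $\hat\theta$, since either of those would change the estimator the corollary prescribes.
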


\begin{remark}[Relation to structure-aware results for sparse linear models] \label{remark:slm}
As a special case, if $\mu_0$ is a well-specified sparse linear model with sparsity level $s_\mu$  and $\mathcal{G}$ is the corresponding sparse linear class, then $\delta_n^\stoc = \sqrt{s_\mu \log(p)/n}$ and $\delta_\mu^\appr=0$. Thus, the above error rate is instantiated as $n^{-1/2} + 0 \cdot \delta_\pi^\appr + (\delta_n^\stoc)^2 \asymp n^{-1/2} + s_\mu \log(p)/n$, matching that given by the debiased-Lasso estimator \citep{zhang2014confidence, van2014asymptotically}. On the other hand, if $\pi_0$ is a well-specified sparse linear model with sparsity level $s_\pi$, and $\mathcal{G}$ is the corresponding sparse linear class, then $\delta_\pi^\appr=0$ and $\delta_n^\stoc = \sqrt{s_\pi \log(p) / n}$, and the error bound becomes $n^{-1/2} + s_\pi \log(p) / n$, which coincides with the rate in \cite{bradic2022testability}. 

Thus, \cref{coro:error-plm} extends the previous structure-aware results for the partial linear model in two aspects. First, it applies to any black-box hypothesis class $\mathcal{G}$ with a controlled stochastic error budget, including classes that cannot be reduced to a well-conditioned sparse linear model via the $\epsilon$-cover argument. Second, it explicitly gives the tight dependence $\delta_\mu^\appr \cdot \delta_\pi^\appr$ on the approximation error budgets; \cref{sec:lb-plm} shows that this dependence is unimprovable. 
\end{remark}

When $\delta_\pi^\appr \asymp 1$, the upper bound reduces to the result of \cite{gu2026optimally}. When $\delta_\mu^\appr \asymp 1$, it gives the symmetric rate
\begin{align*}
    n^{-1/2} + \delta_\pi^\appr + [\delta_n^\stoc]^2.
\end{align*} Thus, even if $\mu_0$ cannot be consistently estimated, $\beta_0$ can still be estimated at this rate when $\pi_0$ can be estimated by the hypothesis class $\mathcal{G}$ with approximation error $\delta_\pi^\appr$ and stochastic error $\delta_n^\stoc$. Moreover, the bound improves continuously as the approximation error of the harder nuisance $\delta_{\max}^\appr = \delta_\mu^\appr \lor \delta_\pi^\appr$ decreases from the hardest regime $\delta_{\max}^\appr \asymp 1$.

\begin{remark}[Inference]
Additionally assume that $\sigma_0 = \sqrt{\mathbb{E}[u^2 \varepsilon^2]} / \mathbb{E}[u^2] > 0$. Under the asymptotic normality conditions in \cref{coro:error-plm}, it is easy to verify that the following plug-in estimate of $\sigma_0$, denoted as $\hat{\sigma}$, satisfies $|\hat{\sigma} - \sigma_0|=o_{P}(1)$:
\begin{align*}
    \frac{\sqrt{n}(\hat{\theta} - \theta_0)}{\hat{\sigma}} \overset{d}{\to} \mathcal{N}(0, 1) \qquad \text{with} \qquad \hat{\sigma} = \frac{\left[\frac{1}{n} \sum_{i=1}^n (Y_i - \hat{\beta} \cdot T_i - \hat{\mu}(X_i))^2 (T_i - \hat{\pi}(X_i))^2 \right]^{1/2}}{\frac{1}{n} \sum_{i=1}^n (T_i - \hat{\pi}(X_i))^2}.
\end{align*} In this case, one can construct a confidence interval accordingly for inference. 
\end{remark}

\subsection{Structure-aware lower bound}
\label{sec:lb-plm}

In this subsection, we establish a no-gap result between the structure-agnostic setting and the worst-case structure-aware setting under the same approximation and stochastic error budgets. Specifically, the error rate \eqref{eq:intro-error} is attainable for any black-box class $\mathcal{G}$ by \cref{coro:error-plm}, yet remains unavoidable for a sparse linear class with an orthonormal dictionary. Thus, relative to the worst-case structure-aware model with the same budgets, structure agnosticism incurs no additional loss in estimation error rate. 
%In this subsection, we show that the error rate in \cref{coro:error-plm} is optimal when the learner class is characterized only through the approximation and stochastic error budgets. In fact, the same rate is already unavoidable for a sparse linear class with orthonormal bases. This further implies that learning the linear coefficient in the partial linear model by a generic black-box model is as easy as a sparse linear model from the perspective of statistical efficiency. 

We consider the simplified setting where $X \sim \nu_d := \mathrm{Unif}([0,1]^d)$, and let $\tfunc := \left\{f: \|f\|_{\infty, [0,1]^d} \le 3\right\}$. For a given pair of function classes $(\mathcal{F}_{\mu}, \mathcal{F}_{\pi})$ for $(\mu_0, \pi_0)$, define the associated family of distributions as 
\begin{align}
\label{eq:dgp-plm}
\begin{split}
    \mathcal{P}(\mathcal{F}_{\mu}, \mathcal{F}_\pi) = \Big\{ &(X, T, Y) \sim \mathbb{P}_{\beta, v, \mu,\pi}~\text{where}~ \mu \in \mathcal{F}_{\mu}, \pi\in \mathcal{F}_\pi, (\beta, v) \in [-3, 3] \times [1/3, 3]:\\
    &~~~~X\sim \nu_d, ~~ T = \pi(X) + u ~\text{with}~ \mathbb{E}[u|X]=0,\mathbb{E}[u^2|X] \equiv v, |T|\le 3,\\
    &~~~~Y = T \cdot \beta + \mu(X) + \varepsilon ~\text{with}~ \mathbb{E}[\varepsilon|X, T]=0,|Y| \le 3\Big\}.
\end{split}
\end{align} The target parameter is $\theta(\mathbb{P}) = \beta$ for $\mathbb{P} = \mathbb{P}_{\beta, v, \mu, \pi} \in \mathcal{P}(\mathcal{F}_\mu, \mathcal{F}_\pi)$. The restriction of the constant conditional variance function $\mathbb{E}[u^2|X]\equiv v$ is imposed to rule out the possibility that one can exploit additional structure in the treatment variance to attain a faster rate. 

The following proposition is analogous to Proposition 3.1 in \cite{gu2026optimally}: the approximation error product term follows similarly to the construction in \cite{balakrishnan2023fundamental, jin2025hard}, while the lower bound of $ s\log(p) / n$ uses the construction of sparse linear models in \cite{gu2026optimally}. 

\begin{proposition}
\label{prop:lb-plm}
    For any $\delta^\appr_\mu, \delta^\appr_\pi \in [0,1/3)$, and $p, s \in \mathbb{N}^+$ with $s\le p$ and $s[\log(p/s)+1] \le n \le p^{1/3}$, there exists a set of orthonormal bases $\{\phi_j(x)\}_{j=1}^p$ with $\mathbb{E}_{X\sim \nu_d}[\phi(X) \phi(X)^\top] = I_p$ for $\phi(X) = (\phi_1(X), \ldots, \phi_p(X))^\top \in \mathbb{R}^p$ such that the function classes defined as
    \begin{align*}
        \mathcal{F}_{\mu} = \mathcal{G}_{p, s} \cup \mathcal{B}(0, \delta^\appr_\mu), \qquad \text{and} \qquad \mathcal{F}_\pi = \mathcal{G}_{p, s} \cup \mathcal{B}(0, \delta^\appr_\pi),
    \end{align*} with $\mathcal{G}_{p, s} := \{g(x) = \sum_{j=1}^p \alpha_j \phi_j(x) \in \tfunc, \|\alpha\|_0 \le s \}$ and $\mathcal{B}(g, \delta) = \{f \in \mathcal{T}: \|f - g\|_{L_2(\nu_d)} \le \delta\}$ satisfies
    \begin{align}
    \label{eq:plm-lb}
         \inf_{\hat{\theta}} \sup_{\mathbb{P}\in \mathcal{P}(\mathcal{F}_\mu, \mathcal{F}_\pi)} \mathbb{E}_{\mathbb{P}^{2n}}\left[|\hat{\theta} - \theta(\mathbb{P})|\right] \ge \frac{1}{211} \left[n^{-1/2} + \delta_\mu^\appr \cdot \delta_\pi^\appr + \frac{s\log(p)}{n} \right].
    \end{align}  Here the randomness in $\mathbb{E}_{\mathbb{P}^{2n}}$ is $2n$ i.i.d. observations from $\mathbb{P}$, and $\hat{\theta}$ is the function of the $2n$ observations.
\end{proposition}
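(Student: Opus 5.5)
Since the right-hand side of \eqref{eq:plm-lb} is a sum of three terms, it suffices to prove each one separately as a lower bound (up to a factor $3$ absorbed into the constant $1/211$). For each term I will exhibit a sub-family of $\mathcal{P}(\mathcal{F}_\mu,\mathcal{F}_\pi)$ and apply Le Cam's two-point method, or its fuzzy-hypothesis (mixture) version. In the mixture version one lower bounds the minimax risk by $\frac{\Delta}{2}\{1-\mathrm{TV}(\bar{\mathbb{P}}_0^{2n},\bar{\mathbb{P}}_1^{2n})\}$, where $\bar{\mathbb{P}}_k$ are mixtures whose targets $\beta$ are separated by $\Delta$. I will control the total variation through the $\chi^2$ divergence between the $2n$-fold product mixtures, or through the Hellinger distance.

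\textbf{The $n^{-1/2}$ term.} Take $\mu=\pi=0$, which lies in both classes since $0\in\mathcal{G}_{p,s}$. Take $u$ Rademacher-like with variance $v=1$, and $Y=\beta T+\varepsilon$ with bounded $\varepsilon$ whose law is tilted so that the two hypotheses have $\beta=\pm c n^{-1/2}$. A direct computation shows that the per-observation $\chi^2$ divergence is $O(1/n)$, so the $2n$-fold total variation stays bounded away from $1$.

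\textbf{The $\delta_\mu^\appr\delta_\pi^\appr$ term.} I will follow the construction of \cite{balakrishnan2023fundamental, jin2025hard}. Partition $[0,1]^d$ into $K$ cubes, with $K\gg n^2$ so that with high probability no cube contains two of the $2n$ samples. On each cube place a mean-zero bump $h_k$ with $|h_k|\le 1$. Randomize signs $\xi_k$ and set $\pi=\delta_\pi^\appr\sum_k\xi_k h_k$ and $\mu=\delta_\mu^\appr\sum_k\eta_k h_k$. These lie in $\mathcal{B}(0,\delta^\appr)\subset\mathcal{F}$ and are bounded by $1/3$, so $|T|,|Y|\le 3$ can be arranged. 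Under the two hypotheses, couple $\eta_k=\xi_k$ and $\eta_k=-\xi_k$ and shift $\beta$ by $\Delta\asymp\delta_\mu^\appr\delta_\pi^\appr$. The shift is chosen so that $\mathbb{E}[Y\mid X]=\mu+\beta\pi$ and $\mathrm{Cov}(Y,T\mid X)$ have matching marginal laws under the two mixtures. Concretely, I pick the joint law of $(u,\varepsilon)$ given $X$ so that the conditional law of $(T,Y)$ given $X$ depends on the cube only through sign-symmetric quantities. Because each cube is occupied by at most one sample, the mixtures factorize across observations and the per-sample laws coincide exactly. The total variation is then at most the probability of a collision, which is $O(n^2/K)$. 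The delicate part is choosing the noise laws so that both $\mathbb{E}[u^2\mid X]\equiv v$ and $\mathbb{E}[\varepsilon\mid X,T]=0$ hold while the marginals match. I will adjust $v$ and the variance of $\varepsilon$ between the two hypotheses to absorb the $\beta$ shift.

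\textbf{The $s\log(p)/n$ term, the main obstacle.} I will reuse the sparse-linear construction of Proposition 3.1 in \cite{gu2026optimally}. Choose orthonormal $\phi_j$, for example Haar-type functions on disjoint dyadic blocks scaled to be bounded, which requires $p$ large relative to $n$ (hence $n\le p^{1/3}$). Take a uniform prior on $s$-subsets $S$. Under one hypothesis, $\mu$ and $\pi$ are built from $\alpha_S=\rho\,\mathbf{1}_S$-type sign vectors with $\rho\asymp\sqrt{\log(p/s)/n}$, and the coupling of $\mu$ and $\pi$ produces a $\beta$ shift of order $s\rho^2\asymp s\log(p)/n$, with the second hypothesis using the opposite coupling. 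The $\chi^2$ divergence between the mixtures reduces to a moment generating function of the overlap $|S\cap S'|$ of two independent random subsets, in the style of sparse detection lower bounds. With $\rho^2 n\lesssim\log(p/s)$ this stays $O(1)$ for a small enough constant. The step that needs the most care is verifying that these functions stay in $\tfunc$ with the boundedness constraints on $T$ and $Y$, which uses $s\rho\le 1$ and the condition $s[\log(p/s)+1]\le n$. Finally, since $n\le p^{1/3}$ gives $\log(p/s)\gtrsim\log p$, the bound $s\log(p/s)/n$ converts to $s\log(p)/n$.
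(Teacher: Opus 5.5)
Your decomposition follows the route the paper indicates: three separate Le Cam or fuzzy-hypothesis bounds, per-cell random signs for the product term, and a sparse prior for the $s\log(p)/n$ term. The $n^{-1/2}$ part is fine. However, the mechanism you propose for the product term cannot work.

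\textbf{Product term.} Write $\delta_\mu,\delta_\pi$ for the two budgets and average over $\xi_k$ for $X$ in cube $k$.
\begin{itemize}
\item Under both hypotheses $\mathbb{E}[T^2\mid X]=\delta_\pi^2h_k(X)^2+v$. Since $\pi$ has the same law in both, matching the law of $T\mid X$ forces the same $v$.
\item Also $\mathbb{E}[\varepsilon T\mid X]=0$, so $\mathrm{Var}(\varepsilon)$ never enters $\mathbb{E}[TY\mid X]$. That cross moment equals $\beta^{(0)}(\delta_\pi^2h_k^2+v)+\delta_\mu\delta_\pi h_k^2$ under one hypothesis and $\beta^{(1)}(\delta_\pi^2h_k^2+v)-\delta_\mu\delta_\pi h_k^2$ under the other.
\item Exact matching therefore needs $(\beta^{(1)}-\beta^{(0)})(\delta_\pi^2h_k(X)^2+v)=2\delta_\mu\delta_\pi h_k(X)^2$ for a.e.\ $X$. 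This forces $\beta^{(1)}=\beta^{(0)}$ as soon as $h_k$ vanishes on part of the cube or $|h_k|$ takes two values.
\end{itemize}
So a generic mean-zero bump fails, and ``adjusting $v$ and $\mathrm{Var}(\varepsilon)$'' cannot repair it.

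What you actually need is twofold.
\begin{itemize}
\item Take $|h_k|\equiv 1$. Then $\zeta=\xi_kh_k(X)$ is a uniform sign, with $\pi(X)=\delta_\pi\zeta$ and $\mu(X)=\pm\delta_\mu\zeta$.
\item Choose a common law of $(\zeta,T)$ given $X$ that makes $r(T):=\mathbb{E}[\zeta\mid X,T]$ linear in $T$. This is needed because the sign-averaged conditional means of $Y$ are $\beta^{(0)}T+\delta_\mu r(T)$ and $\beta^{(1)}T-\delta_\mu r(T)$, and they must agree. Linearity fails for generic noise laws.
\end{itemize}
One choice that works: let $T\mid X$ be two-point on $\{\pm A\}$ with $A=(v+\delta_\pi^2)^{1/2}$ and $\mathbb{P}(T=A\mid X)=(1+\zeta\delta_\pi/A)/2$. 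This gives $\mathbb{E}[T\mid X]=\delta_\pi\zeta$ and $\mathrm{Var}(T\mid X)=v$. Let $Y\mid(X,T)$ be two-point on $\{\pm 3\}$ with mean $\beta T+\mu(X)$. The sign-averaged laws of $(T,Y)\mid X$ then coincide exactly with $\Delta=2\delta_\mu\delta_\pi/(v+\delta_\pi^2)$, and your collision argument finishes the product term.

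\textbf{Sparse term.} There are two further gaps.

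First, the boundedness step does not go through. The condition $s[\log(p/s)+1]\le n$ gives only $\sqrt{s}\,\rho\lesssim 1$, not $s\rho\le 1$. Bounding $\|\rho\sum_{j\in S}\phi_j\|_\infty$ by the triangle inequality over bounded $\phi_j$ needs $s^2\log(p/s)\lesssim n$. This loses the whole regime $s\gg(n/\log p)^{1/2}$. Separately, orthonormal functions on $p$ disjoint blocks have amplitude $\sqrt{p}$, so they cannot be scaled to be bounded. You need a basis on which the prior's functions satisfy $\|g_S\|_\infty\asymp\|g_S\|_2$. For example:
\begin{itemize}
\item split the indices into $s$ groups;
\item give group $k$ a region $R_k$ of measure $1/s$ carrying $p/s$ orthonormal $\pm\sqrt{s}$-valued Walsh-type functions;
\item let the prior choose one index per group.
\end{itemize}
Then $|g_S|\equiv\sqrt{s}\,\rho\le 1$.

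Second, the $\chi^2$ step needs the same exact matching as the product term. Reducing the $\chi^2$ to an overlap MGF requires a common reference $P_*$ under which every per-sample likelihood ratio has the form $1+\zeta_S(x)\psi(t,y)$ with no $S$-independent part. That is exactly the sign-exact matching above, applied with amplitude $\sqrt{s}\,\rho$ for both $\mu$ and $\pi$ and local sign $\zeta_S=g_S/(\sqrt{s}\,\rho)$. Without it, the shift $\Delta\asymp s\rho^2$ leaves a cross term of order $\Delta^2$ even for disjoint supports. The $2n$-th power then produces $\exp\{cn\Delta^2\}=\exp\{c(s\log p)^2/n\}$, which is unbounded once $s\log p\gg\sqrt{n}$. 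With the matching, $\mathbb{E}_*[L_SL_{S'}]=1+O(\rho^2)\,|S\cap S'|$. Your MGF bound with $\rho^2=c_0\log(p/s)/n$, $c_0$ small, and $s\le n\le p^{1/3}$ then goes through.
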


Because $0\in \mathcal{G}_{p,s}$, every function in $\mathcal{F}_\mu$ (resp., $\mathcal{F}_\pi$) can be approximated by functions in the class $\mathcal{G}_{p,s}$ with $L_2$ error at most $\delta_\mu^\appr$ (resp., $\delta_\pi^\appr$). Hence, \cref{prop:lb-plm} provides a lower bound in estimating the linear coefficient $\beta_0$ when both the outcome function and treatment function are approximately $s$-sparse linear models with misspecification error $\delta_\mu^\appr$ and $\delta_\pi^\appr$, respectively. Moreover, under the conditions of \cref{prop:lb-plm}, i.e., $s \le n \le p^{1/3}$, the stochastic error of the class $\mathcal{G}_{p,s}$ satisfies $\delta_n^\stoc \asymp \sqrt{s\log(p)/n}$; see for example, Lemma S1.1 in \cite{gu2026optimally}. Hence \cref{prop:lb-plm} establishes the structure-agnostic optimality: no estimator can guarantee a uniformly smaller error rate than \eqref{eq:intro-error} using only the three budgets.

Combining the lower bound in \cref{prop:lb-plm} and the upper bound in \cref{coro:error-plm} yields the promised no-gap claim. For any hypothesis class $\mathcal{G}$ with fixed approximation errors and stochastic error budgets $(\delta_\mu^\appr, \delta_\pi^\appr, \delta_n^\stoc)$, \cref{coro:error-plm} gives the estimation error
\begin{align*}
    \frac{1}{\sqrt{n}} + \delta^\appr_\mu \cdot \delta^\appr_\pi + (\delta_n^\stoc)^2,
\end{align*} while the above error rate is not improvable even when restricting $\mathcal{G}$ to be a sparse linear model $\mathcal{G}_{p, s}$ with $\delta_n^\stoc \asymp \sqrt{s\log(p)/n}$ as in \cref{prop:lb-plm}. Therefore, the optimal structure-agnostic rate matches the worst-case structure-aware rate under compatible error budgets. Hence, being agnostic to nuisance structure incurs no additional statistical cost. This extends the no-gap result of \cite{gu2026optimally}, which focuses on the regime $\delta_\pi^\appr \asymp 1$, to the general regime under which both $\delta_\mu^\appr$ and $\delta_\pi^\appr$ may vary.

%% file: 2-discussion.tex
\section{Transductive Adversarial Moment-calibrated Editing}
\label{sec:full-method}

\cref{sec:estimator} characterizes the best attainable performance for a generic black-box model and serves as a proof of concept for the general \our~framework. There, the same class $\mathcal{G}$ both produces the initial nuisance estimates and indexes the directions used to calibrate the edited debiasing weights. In this section, we decouple these two roles, allowing arbitrary initial black-box estimates to be calibrated against a separately chosen reference class $\mathcal{G}_0$.

The two cases in \cref{sec:estimator} then become two branches. The \texttt{O}-branch applies when the outcome regression function is easier to estimate and fine-tunes the treatment-side debiasing weights induced by the treatment estimate. The \texttt{R}-branch applies when the treatment regression function is easier to estimate and fine-tunes the outcome-side debiasing weights induced by the initial estimates.

\cref{subsec:plm-2} introduces the design principles and generic \our~algorithm. \cref{subsec:plm-oracle} establishes an oracle-type inequality for arbitrary initial estimates and a separately chosen calibration class.
\cref{subsec:plm-2-theory} specializes this result to two nuisance learners and compares \ourtwo~with DML. \cref{subsec:rate} then uses neural network learners to illustrate the under-smoothing principle for model selection and derive explicit rate comparisons. The resulting bounds show that \ourone~and \ourtwo~can improve on the standard DML guarantee when the nuisance difficulties are imbalanced, and that \ourtwo~can further improve on \ourone.

\subsection{Design principles and generic algorithm}
\label{subsec:plm-2}

%Leveraging the high-level principle behind the estimator in \cref{sec:estimator}, now we describe our {\it \textsf{T}ransductive \textsf{A}dversarial \textsf{M}oment-calibrated \textsf{E}diting (\textsf{TAME})} ($\our$) estimator that applies to two data splits $(\mathcal{D}_1, \mathcal{D}_2)$ and any initial black-box estimate $(\hat{\mu}, \hat{\beta}, \hat{\pi})$. The full procedure is presented in \cref{algo-tas}. 

Let $\mathcal{D}_1=\{(X_i,T_i,Y_i)\}_{i=1}^n$ and $\mathcal{D}_2=\{(X_i,T_i,Y_i)\}_{i=n+1}^{2n}$. \our~takes as input initial estimates $(\hat\mu,\hat\beta,\hat\pi)$ that are independent of $\mathcal{D}_1$, together with a reference class $\mathcal G_0$ used to index the adversarial moment constraints. The reference class need not be the class used to construct the initial estimates, although the oracle-type inequality requires it to contain the estimated easier nuisance.

Given a determination of which nuisance is easier, \our~use the corresponding branch $b \in\{\mathtt{O},\mathtt{R}\}$: the \bo-branch (resp., \br-branch) is intended for the case where the outcome (resp. treatment) regression function is estimated more accurately. Its central difference from DML is an additional post-prediction fine-tuning procedure: \our~retains the component associated with the easier nuisance but fine-tunes the score component (which we referred to as debiasing weights) induced by the harder nuisance, rather than using it directly as in DML. Specifically, the \texttt{O}-branch retains the outcome estimate $\hat\mu$ and fine-tunes the treatment-side weights $\widetilde{a}$, whereas the \texttt{R}-branch retains the treatment estimate $\hat\pi$ and fine-tunes the outcome-side weights $\tilde m$. The fine-tuning remains anchored at the corresponding black-box initial estimates while imposing adversarial moment constraints indexed by the reference class $\mathcal G_0$. Particularly, \our~is guided by a symbiosis of two criteria:
\begin{itemize}
    \item[(C1)] \emph{(Local editing)} In each branch, \our~starts from the same debiasing weights that DML would use for the harder nuisance. A centered ridge penalty discourages deviation from this initialization, so that the fine-tuning preserves good properties of the initial black-box estimate. In the two-learner specialization, this retains 
    \begin{align}
    \label{eq:abstract-error-1}
        (\text{approx. error for the easier nuisance}) \times (\text{$L_2$ error for the harder nuisance}).
    \end{align}
    \item[(C2)] \emph{(Transductive adversarial moment calibration)} Rather than leaving the harder-side weights at their initial value, \our~edits them directly on the inference sample to approximately satisfy the moment constraints indexed by $\mathcal{G}_0$. The weights are represented by an unrestricted vector in $\mathbb R^n$, optimized directly on the inference sample rather than represented by a fitted function of controlled complexity. Its statistical behavior is instead governed by adversarial moment constraints indexed by $\mathcal G_0$. In the \texttt{O}-branch, the edited weights use $\{(X_i, T_i)\}_{i=1}^n$, but not $\{Y_i\}_{i=1}^n$, from $\mathcal D_1$. In the \texttt{R}-branch, the moment constraints use the outcome $\{Y_i\}_{i=n+1}^{2n}$ in $\mathcal D_2$, while the edited values depend on $\mathcal{D}_1$ only through the covariates $\{X_i\}_{i=1}^n$. These constructions preserve the required conditional mean-zero relations. In the two-learner specialization, moment calibration replaces the DML propagation of nuisance errors, i.e., the product of the two nuisance $L_2$ errors, by the term in \eqref{eq:abstract-error-1} plus
    \begin{align*}
        (\text{$L_2$ error for the easier nuisance})^2.
    \end{align*} 
\end{itemize}

The three parts of the name describe the procedure directly: it is \emph{transductive}, mirroring transductive learning, because it directly optimizes observation-specific debiasing weights on the inference sample rather than learning a function intended to generalize to unseen observations, \emph{adversarially moment-calibrated} because it enforces worst-case moments over the entire class $\partial \mathcal{G}_0$, and an \emph{editing} procedure because the optimized weights remain close to the initial black-box-induced weights.

\cref{algo-tas} can be run with any initial estimates that are independent of $\mathcal D_1$. For the oracle-type inequality in \cref{subsec:plm-oracle}, the reference class is required to contain the estimate of the easier nuisance: $\widehat\mu\in\mathcal G_0$ in the \bo-branch and $\widehat\pi\in\mathcal G_0$ in the \br-branch. When the initial estimates are obtained from \eqref{eq:est1-step1} and $\mathcal G_0=\mathcal G$, the \bo- and \br-branches reduce to $\widehat\theta_\mu$ and $\widehat\theta_\pi$, respectively. Thus, \ourone~is a special case of the generic algorithm.

\begin{algorithm}[!t]
\caption{$\our$ for partial linear model}
\label{algo-tas}
\begin{algorithmic}[1]
\State \textbf{Input:} Data split: $\mathcal{D}_1 = \{(X_i, T_i, Y_i)\}_{i=1}^n$, $\mathcal{D}_2 = \{(X_i, T_i, Y_i)\}_{i=n+1}^{2n}$; $\hat{\mu}, \hat{\beta}, \hat{\pi}$ independent of $\mathcal{D}_1$, debiasing reference class $\mathcal{G}_0$; branch choice $b\in \{\bo, \br\}$, hyper-parameter $\lambda > 0$.
\If{$b = \bo$} \Comment{\mypurple{Branch $\bo$: the outcome part is easier}}
    \State Initialize debiasing weights $a\in \mathbb{R}^n$ with $a_i \gets \tilde{a}_i := 
    \left[\left\{\frac1n\sum_{\ell=1}^n T_\ell\{T_\ell-\hat\pi(X_\ell)\}\right\}\vee
    \frac1{2c}\right]^{-1} \{T_i-\widehat\pi(X_i)\}$.
    \State Fine-tune weights $a$ by adversarial estimation on $(\mathcal{G}_0, \mathcal{D}_1)$ and get fine-tuned weights $\hat{a}$:
    \begin{align*}
        \hat{a} \in \argmin_{a\in \mathbb{R}^n} \frac{\lambda}{n} \sum_{i=1}^n (a_i - \tilde{a}_i)^2 + \sup_{\beta\in \mathbb{R}, f \in \partial \mathcal{G}_0} \Phi_\mu(f, \beta, a),
    \end{align*} ~~~~~where $\Phi_\mu(f, \beta, a) := \left| \frac{1}{n}\sum_{i=1}^n (\beta \cdot T_i + f(X_i)) \cdot a_i - \beta\right| - \frac{1}{n} \sum_{i=1}^n f^2(X_i)$.
    \State Assign $\hat{\theta} \gets \hat{\theta}_{\bo} = \frac{1}{n} \sum_{i=1}^n (Y_i - \hat{\mu}(X_i)) \cdot \hat{a}_i$. 
\Else \Comment{\mypurple{Branch $\br$: the treatment part is easier}}
    \State Initialize debiasing weights $m\in \mathbb{R}^n$ with $m_i \gets \tilde{m}_i := \hat{\beta} \cdot \hat{\pi}(X_i) + \hat{\mu}(X_i)$.
    \State Fine-tune weights $m$ by adversarial estimation on $(\mathcal{G}_0, \mathcal{D}_1, \mathcal{D}_2)$ and get fine-tuned weights $\hat{m}$:
    \begin{align*}
        \hat{m} \in \argmin_{m\in \mathbb{R}^n} \frac{\lambda}{n} \sum_{i=1}^n (m_i - \tilde{m}_i)^2 + \sup_{f\in \partial \mathcal{G}_0} \Phi_\pi(f, m),
    \end{align*}
    ~~~~~where $\Phi_\pi(f, m) := \left|\frac{1}{n} \sum_{i=n+1}^{2n} Y_i f(X_i) - \frac{1}{n} \sum_{i=1}^n m_i f(X_i) \right| - \frac{1}{n} \sum_{i=1}^n f^2(X_i)$.
    \State Assign $\hat{\theta} \gets \hat{\theta}_{\br} = [(2\const)^{-1} \lor \frac{1}{n} \sum_{i=1}^n (T_i - \hat{\pi}(X_i))^2]^{-1} [\frac{1}{n} \sum_{i=1}^n (Y_i - \hat{m}_i)(T_i - \hat{\pi}(X_i))]$. 
\EndIf
\State \textbf{Output:} The final estimate $\hat{\theta}$. 
\end{algorithmic}
\end{algorithm}

\subsection{A generic oracle-type inequality}
\label{subsec:plm-oracle}

\cref{thm:plm2} gives an oracle-type inequality for \cref{algo-tas} with arbitrary initial estimates $(\hat{\mu}, \hat{\beta}, \hat{\pi})$ and a separately chosen reference class $\mathcal G_0$. The result separates the errors of the initial estimates from the approximation and stochastic errors associated with $\mathcal{G}_0$, thereby making explicit the contributions of local editing and adversarial moment calibration. For concision, we state the result under an oracle choice of $\lambda$; see the full error with explicit dependence on $\lambda>0$ in the proof. 

\begin{condition}[Reference class and initial estimates]\label{cond:plm-bb} The following holds:
\begin{itemize}
\item[(a)] As in \cref{def:hcs}, $\mathcal{G}_0 \in \mathcal{HCS}(\delta_n^\stoc; n, \nu_x)$, where $\nu_x$ is the distribution of $X$.
\item[(b)] $\hat{\mu}, \hat{\pi}, \hat{\beta}$ are uniformly bounded, i.e., $\|\hat{\mu}\|_\infty\lor \|\hat{\pi}\|_\infty\lor |\hat{\beta}| \le \const$, and are independent of $\mathcal{D}_1$.
\end{itemize}
\end{condition}

\begin{theorem} \label{thm:plm2} Assume \cref{cond:reg-plm} and \cref{cond:plm-bb}. Recall $\hat{\varphi}_{\mathsf{PLM}}$ defined in \eqref{eq:psi-plm}. Then there exists a constant $\tilde{C}=\poly(\const)$ such that if $\delta_n^\stoc \le 1/\tilde{C}$, then for any $t \in [1,n/\tilde{C}]$, with probability at least $1-\left[2e^{-t}+\exp(-n(\delta_n^\stoc)^2)\right]$, the following holds:
\begin{itemize}
    \item[$(\bo)$] If $\hat{\mu} \in \mathcal{G}_0$ and $\|\hat{\pi} - \pi_0\|_2 \le \delta_\pi$ for $\delta_\pi \in \mathbb{R}^+$, then \cref{algo-tas} with branch $b=\bo$ and $\lambda \asymp (\delta_n^\stoc / \delta_\pi)^2$ returns $\hat{\theta}$ satisfying
    \begin{align}
    \label{eq:plm-branch-o}
    \begin{split}
        \tilde{C}^{-1} |\hat{\theta} - \theta_0 - \hat{\varphi}_{\mathsf{PLM}}| &\le \inf_{g\in \mathcal{G}_0} \Bigg[\myblue{\|g - \mu_0\|_2 \cdot \delta_\pi} + \myred{\|g - \hat{\mu}\|_2^2 + [\delta_n^\stoc]^2} \\
        &\qquad \qquad + \mygray{\sqrt{\frac{t}{n}} \left\{ \delta_\pi + \|g - \mu_0\|_2 + (1+\lambda) \sqrt{\frac{t}{n}} \right\}}\Bigg].
    \end{split}
    \end{align}
    \item[$(\br)$] If $\hat{\pi} \in \mathcal{G}_0$ and $\|\hat{\mu} - \mu_0\|_2 + \|\hat{\pi} - \pi_0\|_2 + |\hat{\beta} - \beta_0| \le \delta_{all}$ with $\delta_{all} \in \mathbb{R}^+$, then \cref{algo-tas} with branch $b=\br$ and $\lambda \asymp (\delta_n^\stoc / \delta_{all})^2$ returns $\hat{\theta}$ satisfying
    \begin{align}
    \label{eq:plm-branch-r}
        \begin{split}
        \tilde{C}^{-1} |\hat{\theta} - \theta_0 - \hat{\varphi}_{\mathsf{PLM}}| &\le \inf_{g\in \mathcal{G}_0} \Bigg[\myblue{\|g - \pi_0\|_2 \cdot \delta_{all}} + \myred{\|g - \hat{\pi}\|_2^2 + [\delta_n^\stoc + \|\hat{\pi} - \pi_0\|_2]^2} \\
        &\qquad \qquad + \mygray{\sqrt{\frac{t}{n}} \left\{ \delta_{all} + \|g - \pi_0\|_2 + (1+\lambda) \sqrt{\frac{t}{n}} \right\}}\Bigg].
    \end{split}    \end{align}
\end{itemize}
\end{theorem}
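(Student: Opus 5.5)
We prove the two branches separately, each through a deterministic error decomposition that is then controlled on a single high-probability event. That event is built from three ingredients, all conditional on $\mathcal D_2$, which fixes $(\hat\mu,\hat\beta,\hat\pi)$. (i) A uniform localized concentration over $\partial\mathcal G_0$: using $\mathcal G_0\in\mathcal{HCS}(\delta_n^\stoc;n,\nu_x)$ and Talagrand's inequality with peeling, we get $\|f\|_n^2 \asymp \|f\|_2^2 \pm (\delta_n^\stoc)^2$ and $|\frac1n\sum_i f(X_i)\xi_i| \lesssim \delta_n^\stoc\|f\|_2 + (\delta_n^\stoc)^2$ uniformly in $f\in\partial\mathcal G_0$, for bounded conditionally mean-zero noise $\xi\in\{u,\varepsilon\}$. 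This holds with probability $1-e^{-n(\delta_n^\stoc)^2}$. (ii) Bernstein bounds for finitely many fixed functions (the chosen $g$, $\mu_0$, $\pi_0$, $\hat\pi$), which produce the $\sqrt{t/n}$ gray terms. (iii) Concentration of the denominators $\bar D$ and $\frac1n\sum(T_i-\hat\pi(X_i))^2$ around quantities bounded below by $1/(2c)$.

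\emph{\bo-branch.} We follow the rationale of \cref{subsec:rationale}, but work with the penalized program instead of the constrained one. Fix $g\in\mathcal G_0$, and let $\bar a_i=u_i/\bar D$ be the oracle weight. Using $\frac1n\sum\hat a_iT_i\approx 1$ (enforced by the $\beta$-supremum), we write
\[
\hat\theta-\theta_0=\tfrac1n\sum\varepsilon_i\hat a_i+\tfrac1n\sum(\mu_0-g)(X_i)\hat a_i+\tfrac1n\sum(g-\hat\mu)(X_i)\hat a_i+\beta_0\big(\tfrac1n\sum\hat a_iT_i-1\big).
\]
The first step is to compare objective values with the feasible point $\bar a$. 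By (i), $\sup\Phi_\mu(\cdot,\cdot,\bar a)\lesssim(\delta_n^\stoc)^2$ after completing the square, $-\|f\|_n^2+\delta_n^\stoc\|f\|_n\le(\delta_n^\stoc)^2/4$, and the $\beta$-term is controlled by $|\frac1n\sum u_iT_i/\bar D-1|=0$ by the definition of $\bar D$. This gives simultaneously
\[
\lambda\|\hat a-\tilde a\|_n^2\le\lambda\|\bar a-\tilde a\|_n^2+C(\delta_n^\stoc)^2
\quad\text{and}\quad
\sup_{\beta,f}\Phi_\mu(f,\beta,\hat a)\le\lambda\|\bar a-\tilde a\|_n^2+C(\delta_n^\stoc)^2 .
\]
Since $\|\bar a-\tilde a\|_n\lesssim\delta_\pi+\sqrt{t/n}$, we obtain $\|\hat a-\bar a\|_n\lesssim\delta_\pi+\delta_n^\stoc/\sqrt\lambda$, and the moment slack is $\eta:=\lambda\delta_\pi^2+(\delta_n^\stoc)^2$. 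Taking $\beta\to$ large in $\Phi_\mu$ forces $\frac1n\sum\hat a_iT_i=1$ exactly, since otherwise the supremum is infinite, so the last term vanishes. Taking $f=g-\hat\mu\in\partial\mathcal G_0$, which uses $\hat\mu\in\mathcal G_0$, with a scaling argument bounds the third term by $\|g-\hat\mu\|_n^2+\eta$, and (i) converts $\|\cdot\|_n$ to $\|\cdot\|_2$. The second term splits as $\frac1n\sum(\mu_0-g)u_i/\bar D$, which is absorbed into $\hat\varphi_{\mathsf{PLM}}$ plus a Bernstein remainder $\sqrt{t/n}\|g-\mu_0\|_2$, plus $\|\mu_0-g\|_n\|\hat a-\bar a\|_n$. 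The first term equals $\hat\varphi_{\mathsf{PLM}}$ plus $\frac1n\sum\varepsilon_i(\hat a_i-w_i)$; conditionally on $(X,T)_{1:n}$ and $\mathcal D_2$, this is a mean-zero sum with variance of order $\|\hat a-w\|_n^2/n$, giving $\sqrt{t/n}(\delta_\pi+\delta_n^\stoc/\sqrt\lambda+\sqrt{t/n})$. Plugging in $\lambda\asymp(\delta_n^\stoc/\delta_\pi)^2$ makes $\lambda\delta_\pi^2\asymp\|g-\mu_0\|\,\delta_n^\stoc/\sqrt\lambda$-type cross terms of order $\|g-\mu_0\|_2\delta_\pi+(\delta_n^\stoc)^2$, which yields \eqref{eq:plm-branch-o}.

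\emph{\br-branch.} The argument is symmetric, with $\bar m_i=\mu_0(X_i)+\beta_0\pi_0(X_i)$. The key check is that $\bar m$ is nearly feasible. We have $\frac1n\sum_{i>n}Y_if(X_i)-\frac1n\sum_{i\le n}\bar m_if(X_i)$, and both pieces concentrate around $\mathbb E[\bar m f]$, the first because $Y-\bar m$ is mean-zero given $X$. So by (i) applied on both samples, $\sup_f\Phi_\pi(f,\bar m)\lesssim(\delta_n^\stoc)^2$, and the same comparison gives $\|\hat m-\bar m\|_n\lesssim\delta_{all}+\delta_n^\stoc/\sqrt\lambda$ with moment slack $\eta$. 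Applying the slack with $f=g-\hat\pi$ yields an instance bound on $\frac1n\sum(\bar m_i-\hat m_i)(g-\hat\pi)(X_i)$. We then expand $\hat\theta-\theta_0$ into $\mathsf T_{\pi,n},\mathsf T_{\pi,a},\mathsf T_{\pi,s}$ plus the denominator and $\hat\pi$ higher-order terms. Those extra terms produce $\|\hat\pi-\pi_0\|_2^2$, which is why that quantity appears in the red part. $\mathsf T_{\pi,n}$ is conditionally mean-zero because $\hat m$ depends on $\mathcal D_1$ only through $X_{1:n}$.

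\emph{Main obstacle.} The hardest step is making the Lagrangian comparison rigorous. The objectives are suprema of absolute values minus quadratics, and we need to extract from them instance bounds $|\frac1n\sum f\hat a|\le\|f\|_n^2+\eta$ that hold for all of $\partial\mathcal G_0$ at once. The same difficulty appears in verifying near-feasibility of the oracle weights uniformly without star-shapedness, which is exactly where the definition \eqref{eq:error-stoc-f}, holding for all $\delta\ge\delta_s$, has to be used in the peeling argument.
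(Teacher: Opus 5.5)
Your proposal is correct and follows essentially the same route as the paper's argument, which is written out in Section~\ref{sec:proof-main} for the one-learner special case. You compare the penalized objective at the oracle weights $\bar a_i=u_i/\bar D$ (resp.\ $\bar m_i=m_0(X_i)$) to get both closeness to the oracle and a moment slack, read off the normalization $\frac1n\sum_i\hat a_iT_i=1$ and the instance-dependent bound directly from the supremum, and control the same decomposition terms using same-sample conditioning on $(X_i,T_i)_{i\le n}$ for $\varepsilon$ in the \texttt{O}-branch and cross-sample conditioning on $X_{1:n}$ for $u$ in the \texttt{R}-branch. Two small bookkeeping points: the slack should be $\eta=\lambda(\delta_\pi^2+t/n)+(\delta_n^\stoc)^2$, which is where the $\lambda t/n$ part of the gray term comes from when $\lambda>1$; and no scaling argument is needed (nor available, since $\partial\mathcal G_0$ need not be star-shaped), because the instance bound follows simply by evaluating $\Phi_\mu$ at $\beta=0$.
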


For both branches, the first \myblue{blue} term is the direct result of the criterion [C1]: it multiplies the approximation error of the reference class by the $L_2$ error of the initial estimate on the opposite side. The subsequent \myred{red} quadratic terms arise from the adversarial moment calibration in criterion [C2]. The last \mygray{gray} term is of order $o(n^{-1/2})$ when consistent estimations of $\mu_0$ and $\pi_0$ are attained and $\lambda = o(\sqrt{n})$. In the \ourtwo~specialization below, the tuning hyperparameter $\lambda \le 1$, so this additional requirement is automatic. Their roles will be more explicit in the next subsection, when we use two black-box models to estimate $\mu_0$ and $\pi_0$ separately. 

The error bound for one black-box learner in \cref{coro:error-plm} is a special case of the above theorem by picking $g$ in \eqref{eq:plm-branch-o} (resp. \eqref{eq:plm-branch-r}) as the near-best approximation of $\mu_0$ (resp. $\pi_0$) in the class $\mathcal{G} = \mathcal{G}_0$.

\subsection{\ourtwo: a two-learner specialization}
\label{subsec:plm-2-theory}

When separate black-box models $\mathcal{G}_\mu$ and $\mathcal{G}_\pi$ are used to estimate $\mu_0$ and $\pi_0$, respectively, a natural specialization of our $\our$ method, which we refer to as $\ourtwo$ estimator, first obtains $(\hat{\mu}, \hat{\beta})$ and $\hat{\pi}$ by (joint) least squares. It then uses the branch $\bo$ (resp. $\br$) with reference hypothesis class $\mathcal{G}_0\gets \mathcal{G}_\mu$ (resp. $\mathcal{G}_0 \gets \mathcal{G}_\pi$) when the estimation of the outcome (resp., treatment) function is more accurate. 

In this subsection, we derive the resulting rate and compare it with the standard DML guarantee. \ourtwo~matches the DML guarantee up to constants and improves it when the nuisance difficulties are imbalanced and the easier learner is appropriately under-smoothed.

We first formalize the approximation and stochastic error budgets of the two learners.

\begin{condition}\label{cond:plm-two-learner}
Let $\nu_x$ be the distribution of $X$. Assume $\mathcal{G}_\mu \in \mathcal{HCS}(\delta_\mu^\stoc; n, \nu_x)$, $\mathcal{G}_\pi \in \mathcal{HCS}(\delta_\pi^\stoc; n, \nu_x)$. Define 
\begin{align*}
    \delta_\mu^\appr := \inf_{g\in \mathcal{G}_\mu} \|g - \mu_0\|_{L_2(\nu_x)} \qquad \text{and} \qquad \delta_\pi^\appr := \inf_{g\in \mathcal{G}_\pi} \|g - \pi_0\|_{L_2(\nu_x)}.
\end{align*}
\end{condition}

Given the split $\mathcal{D}_1 \cup \mathcal{D}_2$, both our $\ourtwo$ estimator and DML estimator use the following initial estimates obtained from $\mathcal{D}_2$:
\begin{align}
\label{eq:est2-step1}
    (\hat{\beta}, \hat{\mu}) \in \argmin_{\substack{\beta \in [-\const, \const]\\ g \in \mathcal{G}_\mu}} \frac{1}{n} \sum_{i=n+1}^{2n} \left(Y_i - \beta \cdot T_i - g(X_i)\right)^2, \qquad 
    \hat{\pi} \in \argmin_{g\in \mathcal{G}_\pi} \frac{1}{n} \sum_{i=n+1}^{2n} \left(T_i - g(X_i)\right)^2.
\end{align} It follows from the standard empirical process argument that, with high probability,
\begin{align*}
     \|\hat{\mu} - \mu_0\|_2 + |\hat{\beta} - \beta_0| \lesssim \delta_\mu^\appr + \delta_\mu^\stoc =: \delta_{\mu} \qquad \text{and} \qquad \|\hat{\pi} - \pi_0\|_2 \lesssim \delta_\pi^\appr + \delta_\pi^\stoc =: \delta_{\pi}.
\end{align*} Using the initial estimates in \eqref{eq:est2-step1}, the standard DML estimator $\hat{\theta}_{\dml}$ is:
\begin{align}
\label{eq:est-dml}
    \hat{\theta}_{\dml} = \left\{\frac{1}{n} \sum_{l=1}^n T_l(T_l - \hat{\pi}(X_l)) \right\}^{-1}  \frac{1}{n} \sum_{i=1}^n \left(Y_i - \hat{\mu}(X_i)\right)(T_i - \hat{\pi}(X_i)).
\end{align}
Using the upper error budgets, define \ourtwo~by the following branch and tuning choices:
\begin{align}
\label{eq:est-taff2}
    \hat{\theta}_{\ourtwo} = \begin{cases}
        \hat{\theta} \text{ from \cref{algo-tas} with } b=\bo, \mathcal{G}_0 = \mathcal{G}_\mu, \lambda = (\delta_\mu^\stoc / \delta_\pi)^2 & \qquad \text{if }\delta_\mu \le \delta_\pi \\
        \hat{\theta} \text{ from \cref{algo-tas} with } b=\br, \mathcal{G}_0 = \mathcal{G}_\pi, \lambda = (\delta_\pi^\stoc / \delta_\mu)^2 & \qquad \text{if }\delta_\pi < \delta_\mu \\
    \end{cases}.
\end{align}

The following corollary compares the nuisance contributed remainders of $\hat{\theta}_{\dml}$ and $\hat{\theta}_{\ourtwo}$.
%The following corollary gives the error bounds of $\hat{\theta}_{\dml}$ and $\hat{\theta}_{\ourtwo}$, the error bound for $\hat{\theta}_{\ourtwo}$ by realizing the error bounds in \cref{thm:plm2} where \myblue{blue}, \myred{red}, and \mygray{gray} corresponds the components with same semantic meaning. 
\begin{corollary} \label{coro:plm-2}
    Under \cref{cond:reg-plm} and \cref{cond:plm-two-learner}, suppose further $\delta_\mu + \delta_\pi = o(1)$. Then there exists $R_{\dml}, R_{\ourtwo}$, satisfying $R_{\dml} \lor R_{\ourtwo} = o(n^{-1/2})$, such that, with high probability    
    \begin{align}
    \label{eq:error-two-learner}
    \begin{split}
        |\hat{\theta}_{\dml} - \theta_0 - \hat{\varphi}_{\mathsf{PLM}}| &\lesssim \delta_\mu \cdot \delta_\pi + \mygray{R_{\dml}}, \\
        |\hat{\theta}_{\ourtwo} - \theta_0 - \hat{\varphi}_{\mathsf{PLM}}| &\lesssim \left[\myblue{\delta^\appr_\mu \cdot \delta_\pi} + \myred{(\delta_\mu)^2} \right]\land \left[\myblue{\delta^\appr_\pi \cdot \delta_\mu} + \myred{(\delta_\pi)^2} \right] + \mygray{R_{\ourtwo}}.
    \end{split}
    \end{align}
\end{corollary}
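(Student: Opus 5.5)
The plan has three parts. First, derive the initial-estimate rates that both estimators share. Second, bound the DML remainder by the usual orthogonal-score expansion. Third, obtain the \ourtwo~bound by applying \cref{thm:plm2} to each branch with a judicious choice of $g$.

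\emph{Initial estimates.} Apply the standard localized empirical process bound (the same one behind \eqref{eq:est1-step1}) to the least squares fits in \eqref{eq:est2-step1}, using \cref{cond:plm-two-learner} and the boundedness in \cref{cond:reg-plm}. This gives, with probability at least $1-e^{-t}-2e^{-n(\delta_\mu^\stoc)^2}-e^{-n(\delta_\pi^\stoc)^2}$,
\[
\|\hat\mu-\mu_0\|_2+|\hat\beta-\beta_0|\lesssim\delta_\mu,
\qquad
\|\hat\pi-\pi_0\|_2\lesssim\delta_\pi,
\]
with $\hat\mu\in\mathcal G_\mu$ and $\hat\pi\in\mathcal G_\pi$. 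For the joint fit of $(\beta,\mu)$, the class $\{\beta t+g(x)\}$ adds only one dimension, which costs $O(n^{-1/2})$ and is absorbed by the condition $\delta^\stoc\ge\sqrt{\log n/n}$. Identifiability of $\beta$ follows from $\mathbb E[u^2]\ge 1/\const$. These estimates are independent of $\mathcal D_1$ and bounded, so \cref{cond:plm-bb}(b) holds.

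\emph{DML bound.} Write
\[
\hat\theta_{\dml}-\theta_0=\bar D_{\hat\pi}^{-1}\,\frac1n\sum_{i=1}^n\bigl(\varepsilon_i+(\mu_0-\hat\mu)(X_i)+\beta_0(\pi_0-\hat\pi)(X_i)\bigr)\bigl(u_i+(\pi_0-\hat\pi)(X_i)\bigr)-\beta_0\cdot(\text{matching terms}),
\]
where $\bar D_{\hat\pi}=\frac1n\sum_{l}T_l(T_l-\hat\pi(X_l))$. The terms are then handled in three groups.
\begin{itemize}
\item The cross terms, such as $\frac1n\sum_i\varepsilon_i(\pi_0-\hat\pi)(X_i)$ and $\frac1n\sum_i(\mu_0-\hat\mu)(X_i)u_i$, have conditional mean zero given $\mathcal D_2$. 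They are $O_P(n^{-1/2}(\delta_\mu+\delta_\pi))=o(n^{-1/2})$ and go into $R_{\dml}$.
\item The product term is bounded by Cauchy--Schwarz together with a Bernstein bound comparing empirical and population norms. This gives $\lesssim\delta_\mu\delta_\pi$, and the residual from the empirical-versus-population comparison is $o(n^{-1/2})$.
\item Replacing $\bar D_{\hat\pi}$ by $\mathbb E[u^2]$ contributes $O(\delta_\pi^2+n^{-1/2}\delta_\pi)$ times a quantity of size $O_P(n^{-1/2})$, which is of higher order. This converts the leading term into $\hat\varphi_{\mathsf{PLM}}$.
\end{itemize}

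\emph{\ourtwo~bound.} Consider first the case $\delta_\mu\le\delta_\pi$. Apply \cref{thm:plm2}(\bo) with $\mathcal G_0=\mathcal G_\mu$, $\delta_n^\stoc=\delta_\mu^\stoc$, the budget $\delta_\pi$ (up to constants), and $\lambda=(\delta_\mu^\stoc/\delta_\pi)^2\asymp$ the prescribed choice. Take $g=\bar g_\mu$ to be a near-minimizer with $\|\bar g_\mu-\mu_0\|_2\le2\delta_\mu^\appr$. Then:
\begin{itemize}
\item the blue term is $\lesssim\delta_\mu^\appr\delta_\pi$;
\item the red term is $\|\bar g_\mu-\hat\mu\|_2^2+(\delta_\mu^\stoc)^2\lesssim\delta_\mu^2$;
\item the gray term is $\sqrt{t/n}\,(\delta_\pi+\delta_\mu+(1+\lambda)\sqrt{t/n})$. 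Since $\lambda\le1$ and $\delta_\mu+\delta_\pi=o(1)$, choosing $t\to\infty$ slowly makes this $o(n^{-1/2})$; call it $R_{\ourtwo}$.
\end{itemize}
The case $\delta_\pi<\delta_\mu$ is symmetric via \cref{thm:plm2}(\br) with $\mathcal G_0=\mathcal G_\pi$, $\delta_{all}\asymp\delta_\mu+\delta_\pi\asymp\delta_\mu$, and $g=\bar g_\pi$. The red term becomes $\|\bar g_\pi-\hat\pi\|_2^2+(\delta_\pi^\stoc+\delta_\pi)^2\lesssim\delta_\pi^2$.

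\emph{Getting the minimum of the two brackets.} Each branch only yields its own bracket, so it remains to check that the selected bracket is within a constant of the minimum. Suppose $\delta_\mu\le\delta_\pi$. Then
\[
\delta_\mu^\appr\delta_\pi+\delta_\mu^2\le \delta_\mu\delta_\pi+\delta_\pi^2\lesssim\delta_\pi^2\le\delta_\pi^\appr\delta_\mu+\delta_\pi^2 .
\]
So the first bracket is at most a constant times the second, and therefore at most a constant times their minimum. The other case is symmetric.

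The main obstacle is making the applications of \cref{thm:plm2} legitimate. Its hypotheses require $\delta_n^\stoc\le1/\tilde C$, which holds eventually because the budgets are $o(1)$. They also require the initial-error budgets to hold on the same event, which is why the high-probability event above must be intersected with the theorem's event. Finally, one must verify that the prescribed $\lambda$ matches the theorem's ``$\lambda\asymp$'' choice up to constants. Since the theorem's full bound depends on $\lambda$ only through terms like $\lambda\delta^2+\delta^\stoc\delta/\sqrt\lambda$, a constant-factor mismatch changes the bound only by constants.
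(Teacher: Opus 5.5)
Your proposal is correct and takes essentially the route the paper intends: least-squares rates for the initial fits, the standard orthogonal-score expansion for DML, and a branch-wise application of \cref{thm:plm2} with $g$ the near-best approximant in $\mathcal{G}_\mu$ (resp.\ $\mathcal{G}_\pi$), followed by your elementary check that the selected bracket is within a factor $2$ of the minimum. There are two harmless slips in the DML part. First, the clean identity is $\hat\theta_{\dml}-\theta_0=\bar D_{\hat\pi}^{-1}\frac1n\sum_{i=1}^n\{\varepsilon_i+(\mu_0-\hat\mu)(X_i)\}\{u_i+(\pi_0-\hat\pi)(X_i)\}$, so the $\beta_0(\pi_0-\hat\pi)$ insertion and its ``matching terms'' are unnecessary. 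Second, $\bar D_{\hat\pi}-\mathbb{E}[u^2]$ is $O_P(\delta_\pi+n^{-1/2})$ rather than $O(\delta_\pi^2+n^{-1/2}\delta_\pi)$, because its mean shift $\mathbb{E}[\pi_0(X)(\pi_0-\hat\pi)(X)]$ is first order in $\delta_\pi$; this still leaves the conclusion unchanged, since the deviation multiplies $\frac1n\sum_i\varepsilon_iu_i=O_P(n^{-1/2})$ and $\delta_\pi=o(1)$.
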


To interpret \eqref{eq:error-two-learner}, suppose first that $\delta_\mu \le \delta_\pi$, so the outcome regression is easier and \ourtwo~uses the \bo-branch. DML has nuisance remainder $\delta_\mu \cdot \delta_\pi$, whereas \ourtwo~has $\delta_\mu^{\mathrm{appr}}\delta_\pi+\delta_\mu^2$. Since
\[
\frac{\text{\ourtwo~nuisance error}}{\text{\dml~nuisance error}}=\frac{\delta_\mu^{\mathrm{appr}}\delta_\pi+\delta_\mu^2}{\delta_\mu\delta_\pi}=\frac{\delta_\mu^{\mathrm{appr}}}{\delta_\mu}+\frac{\delta_\mu}{\delta_\pi},
\] and $\delta_\mu^{\mathrm{appr}}\le\delta_\mu\le\delta_\pi$, \ourtwo~is no worse than DML up to constants. 
Furthermore, if $\delta_\mu\ll\delta_\pi$ and $\delta_\mu^\appr\ll\delta_\mu^\stoc$, then the \ourtwo~nuisance remainder is $o(\delta_\mu\delta_\pi)$. The second condition $\delta_\mu^\appr\ll\delta_\mu^\stoc$ is precisely the under-smoothing requirement for the easier outcome part. The treatment-easier case is symmetric. If, in addition, $\delta_\mu\delta_\pi\gg n^{-1/2}$, this reduction yields a strictly faster overall estimation rate than DML. \cref{subsec:rate} makes the comparison explicit for neural network learners.

\subsection{Application to the neural network learners via under-smoothing}
\label{subsec:rate}

%In this subsection, we use a neural network class as an instantiation of the black-box learner to illustrate how to optimally use $\ourone$, which applies our method to one black-box learner as in \cref{sec:estimator}, and $\ourtwo$ in the previous subsection via the idea of under-smoothing \citep{rice1986convergence}, and compare it with the best rate the DML procedure gives. Let $\delta_{\dml}$, $\delta_{\ourone}$, and $\delta_{\ourtwo}$ be the estimation error for $\theta_0$ using different methods with optimal model selection. We will show that
In this subsection, we instantiate the black-box learners by neural networks and use them to illustrate the under-smoothing principle \citep{rice1986convergence}. Let $r_{\dml}$, $r_{\ourone}$, and $r_{\ourtwo}$ denote the optimally tuned convergence rate for estimating $\theta_0$, with poly-logarithmic factors suppressed. We show that
\begin{align} \label{eq:nn-comparison}
    r_{\dml} \gtrsim r_{\ourone} \gtrsim r_{\ourtwo}.
\end{align} The conditions under which these comparisons are strict are given after \cref{coro:nn-plm}. We first introduce the neural network (NN) class and nuisance function class. Then we present the estimator with optimal model selection and the resulting rates.

\myparagraph{Neural network class. } We consider the fully connected deep neural network with ReLU activation $\mathrm{ReLU}(\cdot) = \max\{0, \cdot\}$. For positive integers $L$ and $N$, a \emph{deep ReLU network with depth $L$ width $N$} admits the form of
\begin{align}
\label{eq:nn-architecture}
    g(x) = T_{L+1} \circ \overline{\mathrm{ReLU}}_L \circ T_L \circ \bar{\mathrm{ReLU}}_{L-1} \circ \cdots \circ T_2 \circ \overline{\mathrm{ReLU}}_1 \circ T_1(x).
\end{align} Here $T_{l}(z) = W_l z + b_l: \mathbb{R}^{d_{l-1}} \to \mathbb{R}^{d_{l}}$ with $l\in [L+1]$ is an affine map with weight matrix $W_l \in \mathbb{R}^{d_{l}\times d_{l-1}}$ and bias vector $b_{l} \in \mathbb{R}^{d_{l}}$, where $(d_0,d_1\ldots, d_L, d_{L+1}) = (d, N, \ldots, N, 1)$, and $\overline{\mathrm{ReLU}}_l: \mathbb{R}^{d_l} \to \mathbb{R}^{d_l}$ applies the ReLU activation ${\mathrm{ReLU}}(\cdot)$ to each entry of a $d_l$-dimensional vector for each $l\in [L]$. We define the neural network class as follows. 

\begin{definition}[Deep ReLU network class]
    Define the family of deep ReLU networks with a $d$-dimensional input, depth $L$, width $N$, and truncation level $B$ by 
        $$\mathcal{F}_{\mathtt{nn}}(d, L, N, B) = \{\tilde{g}(\cdot) = \mathrm{Tc}_B(g(\cdot)): g(\cdot) \text{ in } \eqref{eq:nn-architecture}\},$$ 
        where $\mathrm{Tc}_B: \mathbb{R} \to \mathbb{R}$ is the truncation operator defined as $\mathrm{Tc}_B(z) = {\min}\{|z|, B\} \cdot \mathrm{sign}(z)$.
\end{definition}

\myparagraph{Nuisance function class.} Here we introduce the concept of the hierarchical composition model \citep{bauer2019deep, schmidt2020nonparametric, kohler2021rate}, a low-dimensional function class that neural networks can efficiently estimate. 

\begin{definition}[Hierarchical composition model $\mathcal{F}_{\mathtt{HCM}}(d, \ell, \mathcal{O}, C)$]
\label{hcm}
    Let $\ell, d \in \mathbb{N}^+$, $C>0$, and let $\mathcal{O} \subset [1,\infty) \times \mathbb{N}^+$ be a non-empty finite set. Define $\mathcal{F}_{\mathtt{HCM}}(d, \ell, \mathcal{O}, C)$ \citep{kohler2021rate} recursively as follows. Let $\mathcal{F}_{\mathtt{HCM}}(d, 0,\mathcal{O}, C)=\{h(x)=x_j, j\in [d]\}$, and for each $\ell \ge 1$, 
    \begin{align*}
    \mathcal{F}_{\mathtt{HCM}}(d, \ell,\mathcal{O}, C) = \big\{&h: \mathbb{R}^d \to \mathbb{R}: h(x) = g(f_1(x),...,f_r(x))\text{, where} \\
    &~~~~~ g\in \mathcal{F}_{\mathtt{HS}}(r, \beta, C) \text{ with } (\beta, r)\in \mathcal{O} \text{ and } f_i \in \mathcal{F}_{\mathtt{HCM}}(d, \ell-1,\mathcal{O}, C)\big\}. 
\end{align*} Here $\mathcal{F}_{\mathtt{HS}}(d, \beta, C)$ denotes the set of all the $d$-variate $(\beta, C)$-H{\"o}lder smooth functions in \cite{gu2024causality}.
\end{definition}

The class $\mathcal{F}_{\mathtt{HCM}}(d, \ell,\mathcal{O}, C)$ consists of finite compositions of $r$-variate functions with H{\"o}lder smoothness $\beta$, where $(\beta, r) \in \mathcal{O}$. Define the smallest dimension-adjusted smoothness \citep{fan2024factor} $\gamma^\star = \min_{(\beta, r)\in \mathcal{O}} (\beta/r)$. For example, if $f(x) = f_1(x_1) + f_2(f_3(x_2, x_3), f_4(x_1, x_4), x_5)$ and all functions have smoothness $\beta=2$, then the hardest component is $f_2$ with $(\beta, r) = (2, 3)$, resulting in $\gamma^\star = 2/3$. 

If $f_0$ lies in $\mathcal{F}_{\mathtt{HCM}}(d, \ell,\mathcal{O}, C)$, then the approximation error \citep{fan2024noise} and stochastic error for a neural network class $\mathcal{F}_{\mathtt{nn}}(d, L, N, B)$ can be represented by its statistical complexity $S=(NL)^2$ as 
\begin{align}
\label{eq:error-nn}
    \delta^\appr \lesssim \poly(\log(S)) \cdot S^{-\gamma^\star} \qquad \text{and} \qquad \delta^\stoc \lesssim \poly(\log(Sn)) \cdot \sqrt{\frac{S}{n}}.
\end{align} See, for example, proof of Theorem 2.1 in \cite{gu2024causality}. Hence, a neural network estimator $\hat{f}$ can optimally estimate the target regression function $f_0 \in \mathcal{F}_{\mathtt{HCM}}(d, \ell,\mathcal{O}, C)$ by choosing $S \asymp n^{\frac{1}{(2\gamma^\star+1)}} =: \mathbb{S}(\gamma^\star)$ that balances $\delta^\appr \asymp \delta^\stoc$, resulting in a $L_2$ estimation error $\|\hat{f} - f_0\|_2 \lesssim \poly(\log(n)) \cdot n^{-\frac{\gamma^\star}{2\gamma^\star+1}}$, which matches the minimax optimal $L_2$ estimation risk over $\mathcal{F}_{\mathtt{HCM}}(d, \ell, \mathcal{O}, C)$.

\myparagraph{Model selection for optimal convergence rate.} We next impose regularity conditions under which the approximation error and stochastic error budgets in \eqref{eq:error-nn} apply to both nuisance functions. \cref{cond:nn-plm} specifies the HCM structure, boundedness on the data generating process, and the neural network architectures used by \dml, \ourone, and \ourtwo~estimators. %We first impose a condition on the data-generating process and regularity conditions on the function class $\mathcal{G}, \mathcal{G}_\mu, \mathcal{G}_\pi$ that may be used for \dml, \ourone, or \ourtwo~ estimator: Here (a) specifies the low-dimensional structure $\mu_0$ and $\pi_0$ have and assume that both functions can be estimated at a polynomial rate; (b) is a regularity condition similar to \cref{cond:reg-plm} but additionally assumes the covariate $X$ is also uniformly bounded, which is used for deriving approximation error rates for neural networks. We also require the NN class used to have at least $\poly\log(n)$ growing depth and width, which is also of regularity nature. 

\begin{condition} \label{cond:nn-plm} Recall the partial linear model in \eqref{eq:intro:model}, there exists a constant $c_1 > 1$ such that:
\begin{itemize}
\item[(a)] Let $\mathcal{O}_\mu$ and $\mathcal{O}_\pi$ be two sets of smoothness-dimension indices, $\ell \in \mathbb{N}^+$, $C_0 \in \mathbb{R}^+$ such that $$\max_{(\beta, r) \in \mathcal{O}_\mu \cup \mathcal{O}_\pi} \allowbreak \{\beta \lor r\} \lor \ell \lor C_0 \lor d \le c_1,$$ $f_0 \in \mathcal{F}_{\mathtt{HCM}}(d, \ell, \mathcal{O}_f, C_0)$ with $\gamma_f^\star = \min_{(\beta, r)\in \mathcal{O}_f} (\beta/r)$ for $f\in \{\mu, \pi\}$. We assume $\gamma_\mu^\star \land \gamma_\pi^\star > 0$.  
\item[(b)] The nuisance functions and variables are uniformly bounded: $\|X\|_\infty \le c_1$, $|\varepsilon| \lor |u| \le c_1$, $|\beta_0| \lor \|\mu_0\|_\infty \lor \|\pi_0\|_\infty \le c_1$, and $\mathbb{E}[u^2] \ge c_1^{-1}$.
\item[(c)] We use $\mathcal{G}_1 = \mathcal{F}_{\mathtt{nn}}(d, L_1, N_1, B)$, $\mathcal{G}_\mu = \mathcal{F}_{\mathtt{nn}}(d, L_\mu, N_\mu, B)$ and $\mathcal{G}_\pi = \mathcal{F}_{\mathtt{nn}}(d, L_\pi, N_\pi, B)$ with same $B=c_1$ but different depth and width parameters $(L_q,N_q)$ for $q\in \{1, \mu, \pi\}$ satisfying $L_q \land N_q \ge c_2 \log(n)$ for some constant $c_2$ dependent on $c_1$. We also denote $S_q = (N_q L_q)^2$ for $c\in \{1, \mu, \pi\}$. 
\end{itemize}
\end{condition} 

The following corollary calculates the convergence rate for different methods under optimal model selection by plugging the choice of $S$ into the approximation error and stochastic error budgets \eqref{eq:error-nn}: the inequality \eqref{eq:nn-rate} establishes the rate ordering in \eqref{eq:nn-comparison}. 

\begin{corollary}
\label{coro:nn-plm}
Under \cref{cond:nn-plm}, recall $\mathbb{S}(\gamma^\star) = n^{\frac{1}{(2\gamma^\star+1)}}$, consider the following three estimators,
\begin{itemize}
    \item[(a)] $\hat{\theta}_{\dml, \textsf{NN}} \gets \hat{\theta}_{\dml}$ in \eqref{eq:est-dml} with $\mathcal{G}_\mu, \mathcal{G}_\pi$ satisfying $S_\mu \asymp \mathbb{S}(\gamma_\mu^\star)$ and $S_\pi \asymp \mathbb{S}(\gamma_\pi^\star)$.
    \item[(b)] $\hat{\theta}_{\ourone, \textsf{NN}} \gets \hat{\theta}$ in \eqref{eq:est1-est} with $\mathcal{G}=\mathcal{G}_1$ satisfying $S_1 \asymp n^{\frac{1}{\gamma^\star_\mu+\gamma^\star_\pi+1}}$ and the optimal choice of $\lambda$ in \cref{thm:main-plm}. Here $S_1$ balances the polynomial upper bounds of $\delta_\mu^\appr \cdot \delta_\pi^\appr$ and $(\delta_n^\stoc)^2$ as $S_1^{-(\gamma^\star_\mu+\gamma^\star_\pi)} \asymp S_1 / n$.
    \item[(c)] $\hat{\theta}_{\ourtwo, \textsf{NN}} \gets \hat{\theta}_{\ourtwo}$ in \eqref{eq:est-taff2} with $\mathcal{G}_\mu$ and $\mathcal{G}_\pi$ satisfying the following hyper-parameter choice 
    \begin{align}
        \begin{cases}
            {S_\pi \asymp \mathbb{S}(\gamma_\pi^\star)}, {S_\mu \asymp n^{\frac{\gamma_\pi^\star+1}{2\gamma_\pi^\star+1}\cdot \frac{1}{\gamma_\mu^\star+1}}} \gg \mathbb{S}(\gamma_\mu^\star) \qquad & \text{if } \gamma_\mu^\star > \gamma_\pi^\star \\
            {S_\mu \asymp \mathbb{S}(\gamma_\mu^\star)}, S_\pi \asymp \mathbb{S}(\gamma_\pi^\star) \qquad & \text{if } \gamma_\pi^\star = \gamma_\mu^\star \\
            {S_\mu \asymp \mathbb{S}(\gamma_\mu^\star)}, {S_\pi \asymp n^{\frac{\gamma_\mu^\star+1}{2\gamma_\mu^\star+1}\cdot \frac{1}{\gamma_\pi^\star+1}}}\gg \mathbb{S}(\gamma_\pi^\star) \qquad & \text{if } \gamma_\pi^\star > \gamma_\mu^\star
        \end{cases}. \label{eq:model-selection-2}
    \end{align} 
\end{itemize} For any $\mathsf{M} \in \{\dml, \ourone, \ourtwo\}$, $\hat{\theta}_{\mathsf{M}, \mathsf{NN}}$ satisfies $|\hat{\theta}_{\mathsf{M}, \mathsf{NN}} - \theta_0| \lesssim n^{-1/2} + \poly(\log(n)) \cdot n^{-\alpha_{\mathsf{M}}}$ with high probability, where
    \begin{align}
    \label{eq:nn-rate}
         \underbrace{\frac{\gamma_\mu^\star}{2\gamma_\mu^\star+1} + \frac{\gamma_\pi^\star}{2\gamma_\pi^\star+1}}_{=:\alpha_{\dml}} \le \underbrace{\frac{\gamma_\mu^\star+\gamma_\pi^\star}{\gamma_\mu^\star+\gamma_\pi^\star+1}}_{=:\alpha_{\ourone}} \le \underbrace{\frac{s^\star}{s^\star + \{\gamma^\star_\mu\land \gamma^\star_\pi\}+1}}_{=:\alpha_{\ourtwo}}
    \end{align} and where $s^\star = 2\gamma_\mu^\star \gamma_\pi^\star + \gamma_\mu^\star + \gamma_\pi^\star$, with both equalities in \eqref{eq:nn-rate} holding if and only if $\gamma_\mu^\star = \gamma_\pi^\star$.
\end{corollary}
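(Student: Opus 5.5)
The plan is to derive all three rates from results already stated, substituting the neural network budgets \eqref{eq:error-nn} into each bound, and then to compare the three exponents by elementary algebra.

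\emph{DML.} I would use the first line of \cref{coro:plm-2}. With $S_q\asymp\mathbb{S}(\gamma_q^\star)$ we have $\delta_q\lesssim \poly(\log n)\, n^{-\gamma_q^\star/(2\gamma_q^\star+1)}$, so the product $\delta_\mu\delta_\pi$ gives $\alpha_{\dml}$.

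\emph{One learner.} We have $\mathcal{G}_1\supseteq$ both targets with approximation errors $S_1^{-\gamma_\mu^\star}$ and $S_1^{-\gamma_\pi^\star}$, up to logarithms. Here I would check that the architecture condition $L_1\land N_1\ge c_2\log n$ makes one network approximate both HCM functions. \cref{coro:error-plm} then gives $S_1^{-(\gamma_\mu^\star+\gamma_\pi^\star)}+S_1/n$. Balancing yields $S_1=n^{1/(\gamma_\mu^\star+\gamma_\pi^\star+1)}$ and the exponent $\alpha_{\ourone}$.

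\emph{Two learners.} Suppose, say, $\gamma_\mu^\star>\gamma_\pi^\star$, so that $\delta_\mu\le\delta_\pi$ and the O-branch is used. By the second line of \cref{coro:plm-2}, the rate is $\delta_\mu^{\appr}\delta_\pi+\delta_\mu^2$ with $\delta_\pi\asymp n^{-\gamma_\pi^\star/(2\gamma_\pi^\star+1)}$. Under-smoothing $S_\mu$ makes $\delta_\mu\asymp\delta_\mu^\stoc$, so the bound becomes
\[
S_\mu^{-\gamma_\mu^\star}\,n^{-\gamma_\pi^\star/(2\gamma_\pi^\star+1)}+S_\mu/n .
\]
Equating the two terms gives
\[
S_\mu^{\gamma_\mu^\star+1}=n^{(\gamma_\pi^\star+1)/(2\gamma_\pi^\star+1)},
\]
which is exactly \eqref{eq:model-selection-2}. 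The resulting rate is $n^{-1}S_\mu$, and its exponent is
\[
1-\frac{\gamma_\pi^\star+1}{(2\gamma_\pi^\star+1)(\gamma_\mu^\star+1)}
=\frac{2\gamma_\mu^\star\gamma_\pi^\star+\gamma_\mu^\star+\gamma_\pi^\star}{(2\gamma_\pi^\star+1)(\gamma_\mu^\star+1)}.
\]
Its denominator equals $s^\star+\gamma_\pi^\star+1$, which gives $\alpha_{\ourtwo}$.

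Two checks are needed here:
\begin{itemize}
\item $S_\mu\gg\mathbb{S}(\gamma_\mu^\star)$, so that $\delta_\mu\le\delta_\pi$ still holds and the branch choice is valid.
\item The remainder $R_{\ourtwo}$ is $o(n^{-1/2})$, which requires $\delta_\mu+\delta_\pi=o(1)$ and follows from $\gamma^\star>0$.
\end{itemize}
The case $\gamma_\pi^\star>\gamma_\mu^\star$ is symmetric. When $\gamma_\pi^\star=\gamma_\mu^\star$ all three rates coincide. For an exact statement I would carry the logarithmic factors along as $\poly(\log n)$.

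\emph{Ordering \eqref{eq:nn-rate}.} Write $a=\gamma_\mu^\star$, $b=\gamma_\pi^\star$, and assume $a\ge b$ by symmetry.

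For $\alpha_{\dml}\le\alpha_{\ourone}$, I would compare the complements. We have $1-\alpha_{\ourone}=1/(a+b+1)$ and
\[
1-\alpha_{\dml}=\frac{1}{2}\left[\frac{1}{2a+1}+\frac{1}{2b+1}\right].
\]
Clearing denominators, the desired inequality reduces to $(a-b)^2\ge0$. Equality holds iff $a=b$.

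For $\alpha_{\ourone}\le\alpha_{\ourtwo}$, note that $1-\alpha_{\ourtwo}=(b+1)/((2b+1)(a+1))$. Cross-multiplying, the inequality becomes
\[
(b+1)(a+b+1)\le(2b+1)(a+1).
\]
This simplifies to $b^2\le ab$, i.e.\ $b(a-b)\ge 0$, which holds and is strict iff $a>b$ (since $b>0$).

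The main obstacle is the one-learner step: justifying that a single network class simultaneously attains both approximation rates with the stated complexity. I would handle it by citing the HCM approximation result, which applies to each target separately for the same $(L_1,N_1)$. The inequalities themselves are routine.
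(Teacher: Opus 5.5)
Your proposal is correct and follows the paper's route. You substitute the neural-network budgets \eqref{eq:error-nn} into \cref{coro:error-plm} and \cref{coro:plm-2}, balance the terms to recover the stated choices of $S_1$ and $S_\mu$ (resp.\ $S_\pi$), and reduce the exponent ordering to $(a-b)^2\ge 0$ and $b(a-b)\ge 0$. One small correction to your checks: $S_\mu\gg\mathbb{S}(\gamma_\mu^\star)$ justifies $\delta_\mu\asymp\delta_\mu^{\mathrm{stoc}}$, but the branch condition $\delta_\mu\le\delta_\pi$ amounts to the upper bound $S_\mu\lesssim\mathbb{S}(\gamma_\pi^\star)$, since enlarging $S_\mu$ only increases $\delta_\mu$; this holds because $(\gamma_\pi^\star+1)/(\gamma_\mu^\star+1)<1$.
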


%The claim \eqref{eq:nn-comparison} at the beginning of this subsection, i.e., the strict improvement of \ourone~over \dml~and the strict improvement of \ourtwo~over \ourone~follows directly from the inequality \eqref{eq:nn-rate}. We comment on the model selection criterion for different methods below. 
%\eqref{eq:nn-comparison} establishes the rate ordering in \eqref{eq:nn-rate}. Under \cref{cond:nn-plm}, both exponent inequalities are strict whenever $\gamma_\mu^\star \neq \gamma_\pi^\star$. These exponent gains further yield strict improvements in the overall estimation error rate whenever the nonparametric term dominates. 

\begin{remark}[Optimal model selection for DML] For DML estimator, the nuisance errors enter in the final error as $\delta_\mu \cdot \delta_\pi$ in \cref{coro:plm-2}, which means the model selections for $\mathcal{G}_\mu$ and $\mathcal{G}_\pi$ should match that for optimal estimations of $\mu_0$ and $\pi_0$, i.e., $\delta_\mu^\appr \asymp \delta_\mu^\stoc$ and $\delta_\pi^\appr \asymp \delta_\pi^\stoc$. 
\end{remark}
\begin{remark}[Optimal model selection for \our]
For \ourone, the nuisance error budgets enter in the final error as $\delta^\appr_\mu \cdot \delta^\appr_\pi + (\delta_n^\stoc)^2$, which suggests the corresponding model selection rule $\delta^\appr_\mu \cdot \delta^\appr_\pi \asymp (\delta_n^\stoc)^2$. This choice under-smooths the easier nuisance, but over-smooths the harder nuisance. \ourtwo~avoids the drawbacks introduced by the latter restriction by using separate neural networks. To see this, when $\gamma_\mu^\star > \gamma_\pi^\star$, i.e., the outcome is easier to estimate,  \cref{coro:plm-2} gives the oracle-type error bound $(\delta_\pi^\appr + \delta_\pi^\stoc) \cdot \delta_\mu^\appr + \delta_\mu^2$, this suggests doing standard bias-variance tradeoff on the harder $\pi_0$ part $\delta_\pi^\appr \asymp \delta_\pi^\stoc$ and further under-smoothing on easier $\mu_0$ part $\delta_\mu^\appr \cdot \delta_\pi^\star \asymp (\delta_\mu^\stoc)^2$ on top of the balanced $\delta_\pi^\star := \delta_\pi^\appr \asymp \delta_\pi^\stoc$ as in the first line of \eqref{eq:model-selection-2}. The mirror case $\gamma_\pi^\star \ge \gamma_\mu^\star$ follows similarly.
\end{remark}
\begin{remark}[The case where $\alpha_{\mathsf{M}}$ coincides] Under \cref{cond:nn-plm}, both inequalities in \eqref{eq:nn-rate} are strict unless $\gamma_\mu^\star = \gamma_\pi^\star$. Thus, at the nonparametric rate level, \ourone~strictly improves on \dml, and \ourtwo~strictly improves on \ourone~when the nuisance difficulties are imbalanced. These exponent gains further yield strict improvements in the overall estimation error upper bound whenever the nonparametric term dominates $n^{-1/2}$. 
\end{remark}

\section{Extensions beyond the partial linear model}
\label{sec:extension}

This section extends \our~beyond the partial linear model. We discuss the extension of \texttt{O}-branch to a broad class of linear functional estimation problems in \cref{subsec:o-branch}. The extension of \texttt{R}-branch for estimating average treatment effect (ATE) is discussed in \cref{subsec:r-branch}. With a slight abuse of notation, we adopt $\mu$, $\theta$ with different semantic meanings than those for the partial linear model before.  

\subsection{The \texttt{O}-branch for general linear functionals}
\label{subsec:o-branch}

\myparagraph{Setup.} Consider the following general model for $(X, T, Y) \in \mathbb{R}^{d} \times \mathbb{R} \times \mathbb{R}$:
\begin{align}
\label{eq:model-general}
    Y = \mu_0(X, T) + \varepsilon \qquad \text{with} \qquad \mu_0 \in \mathcal{H}_\mu, ~\mathbb{E}[\varepsilon|X, T] = 0, 
\end{align} where $\mathcal{H}_{\mu}$ is a Hilbert space equipped with the inner product $\langle h_1, h_2\rangle = \mathbb{E}_{\nu_z}[h_1(Z) h_2(Z)]$, and where $\nu_z$ is the distribution of $Z = (X, T) \in \mathbb{R}^{d} \times \mathbb{R}$. Given $2n$ observations $\mathcal{D} = \{(Z_i, Y_i)\}_{i=1}^{2n}$ drawn i.i.d. from the above model, the goal is to estimate the following linear functional 
\begin{align}
\label{eq:theta-general}
    \theta_0 = \theta(\mu_0) \qquad \text{with} \qquad \theta(\mu) = \mathbb{E}_{}[\psi(\mu, Z)]
\end{align} for a known operator $\psi: \mathcal{H}_\mu \times \mathbb{R}^{d+1} \to \mathbb{R}$. We focus on the setting under which $\theta(\cdot)$ is linear and the map $\mu \mapsto \psi(\mu,\cdot)$ is Lipschitz from $L_2(\nu_z)$ to $L_2(\nu_z)$, as formalized below.
\begin{condition}[Linearity and continuity] \label{cond:linear-functional} There exists a constant $\const \ge 1$ such that the following condition holds for $\theta(\cdot)$ on $\mathcal{H}_\mu$: 
\begin{itemize}
\item[(a)] For any $\mu, \tilde{\mu} \in \mathcal{H}_\mu$ and $\alpha \in \mathbb{R}$, $\theta(\alpha \mu + \tilde{\mu}) = \alpha \cdot \theta(\mu) + \theta(\tilde{\mu})$;  
\item[(b)] For any $\mu, \tilde{\mu} \in \mathcal{H}_\mu$, $\mathbb{E}[|\psi(\mu, Z) - \psi(\tilde{\mu}, Z)|^2] \le \const \mathbb{E}[|\mu(Z) - \tilde{\mu}(Z)|^2]$.
\end{itemize}
\end{condition}
\cref{cond:linear-functional} implies that $\theta$ is a linear continuous functional. Indeed, $\theta(0) = 0$ by the linearity of $\theta(\cdot)$, and \cref{cond:linear-functional} (b) then implies that, for any $\mu\in \mathcal{H}_\mu$,
\begin{align*}
    |\theta(\mu)| = \left|\mathbb{E}[\psi(\mu, Z) - \psi(0, Z)]\right| \le \left\{\mathbb{E}[|\psi(\mu, Z) - \psi(0, Z)|^2]\right\}^{1/2} \le \const^{1/2} \|\mu\|_{L_2(\nu_z)}.
\end{align*}
The Riesz representer theorem therefore yields a unique Riesz representer $r_0 \in \mathcal{H}_\mu$ for the linear functional $\theta(\cdot)$ such that
\begin{align}
\label{eq:riesz-representer}
    \forall \mu \in \mathcal{H}_\mu, \qquad \theta(\mu) = \mathbb{E} \left[\psi(\mu, Z)\right] = \mathbb{E} \left[\mu(Z) \cdot r_0(Z)\right].
\end{align} The partial linear model in \cref{sec:estimator} is an instance of the above general framework with $\mathcal{H}_\mu = \{\mu(X, T) = \beta \cdot T + f(X): \beta \in \mathbb{R}, f \in L_2(\nu_x)\}$, $\psi(\mu, Z) = \beta$ and Riesz representer $r_0(X, T) = (T - \mathbb{E}[T|X]) / \mathbb{E}[(T - \mathbb{E}[T|X])^2]$.

\myparagraph{Method and main results.} In a similar spirit to Section 4 of \cite{gu2026optimally}, \cref{algo-tas-lf} extends the \texttt{O}-branch in \cref{algo-tas} by initializing the debiasing weights with an arbitrary estimate $\hat{r}$ of the Riesz representer and then fine-tuning these weights against the reference class $\mathcal{G}_0$. 

\begin{algorithm}[H]
\caption{\our~for linear functional $\theta(\cdot)$, $\bo$-branch}
\label{algo-tas-lf}
\begin{algorithmic}[1]
\State \textbf{Input:} $\mathcal{D}_1 = \{(Z_i, Y_i)\}_{i=1}^n$; $\hat{\mu}, \hat{r}$ independent of $\mathcal{D}_1$, debiasing reference class $\mathcal{G}_0$, hyper-parameter $\lambda$.
\State Initialize debiasing weights $a \in \mathbb{R}^n$ with $a_i \gets \tilde{a}_i$ where $\tilde{a}_i = \hat{r}(Z_i)$. 
\State Fine-tune weights $a$ by adversarial estimation on $(\mathcal{G}_0, \mathcal{D}_1)$ and get fine-tuned weights $\hat{a}$:
\begin{align*}
    \hat{a} \in \argmin_{a\in \mathbb{R}^n} \frac{\lambda}{n} \sum_{i=1}^n |a_i - \tilde{a}_i|^2+ \sup_{f \in \partial \mathcal{G}_0} \left|\frac{1}{n} \sum_{i=1}^{n} \{\psi(f, Z_i) - a_i \cdot f(Z_i)\} \right| - \frac{1}{n}\sum_{i=1}^n f^2(Z_i)
\end{align*} 
\State \textbf{Output:} The final estimate $\hat{\theta} \gets \frac{1}{n} \sum_{i=1}^n \{\psi(\hat{\mu}, Z_i) + (Y_i - \hat{\mu}(Z_i)) \cdot \hat{a}_i\}$. 
\end{algorithmic}
\end{algorithm}

\begin{remark}[Implementing the initial estimate] The algorithm can be applied to any initial black-box estimates $\hat{\mu}, \hat{r}$ of the outcome function $\mu_0$ and the Riesz representer $r_0$. For example, $\hat{\mu}$ may be obtained by fitting $Y$ on $Z$ via least squares over a hypothesis class $\mathcal{G}_\mu$ on $\mathcal{D}_2 = \{(Z_i, Y_i)\}_{i=n+1}^{2n}$. The Riesz representer $\hat{r}$ may be estimated either by plugging in the estimated nuisance functions into a known analytic expression or directly via adversarial Riesz estimation as in \cite{chernozhukov2026adversarial}. 
\end{remark}

The following conditions allow the \bo-branch oracle-type inequality, i.e., \eqref{eq:plm-branch-o} in \cref{thm:plm2}, to extend to the general linear functional $\theta$. 
\begin{condition} \label{cond:lf-all} There exists a constant $\const \ge 1$ such that:
\begin{itemize}
    \item[(a)] (Boundedness) $|\varepsilon| \le \const$, $\|r_0\|_\infty \lor \|\mu_0\|_\infty \le \const$, $\|\psi(\mu_0, \cdot)\|_\infty \le \const$. 
    \item[(b)] (Function class $\mathcal{G}_0$) The reference class $\mathcal{G} \subset \mathcal{H}_\mu$ satisfies $\mathcal{G}_0 \in \mathcal{HCS}(\delta_n^\stoc; n, \nu_z)$ and $\hat{\mu} \in \mathcal{G}_0$. Moreover, $\|\psi(f, \cdot)\|_\infty \le \const$ for any $f\in \partial \mathcal{G}_0$, and
    \begin{align*}
        \sup_{\delta \ge \delta_n^\stoc} \frac{\mathsf{R}_{n, \nu_z}(\delta; (\partial \mathcal{G}_0)^\psi)}{\const \cdot \delta} \le \delta_n^\stoc \qquad \text{for} \qquad (\partial \mathcal{G}_0)^\psi := \{z\mapsto \psi(g - \tilde{g}, z): g, \tilde{g} \in \mathcal{G}_0\}. 
    \end{align*} 
    \item[(c)] (Initial black-box estimate) $\hat{r}$, $\hat{\mu}$ are independent of $\mathcal{D}_1$ and uniformly bounded, i.e., $\|\hat{r}\|_\infty \lor \|\hat{\mu}\|_\infty \le \const$. 
\end{itemize}
\end{condition}
\begin{theorem} \label{thm:lf-o}
    Under \cref{cond:linear-functional} and \cref{cond:lf-all}, if $\|\hat{r} - r_0\|_{L_2(\nu_z)} \le \delta_r$ for $\delta_r \in \mathbb{R}^+$ and we choose $\lambda \asymp [\delta_n^\stoc / (\delta_r + \delta_n^\stoc)]^2$, then for any $t \ge 1$, the estimate returned by \cref{algo-tas-lf} satisfies, with probability at least $1-2e^{-t}-e^{-n(\delta_n^\stoc)^2}$,
    \begin{align}\label{eq:lf-o}
    \begin{split}
        \tilde{C}^{-1} |\hat{\theta} - \theta_0 - \hat{\varphi}_{\mathsf{LF}, \theta}| &\le \inf_{\bar{g} \in \mathcal{G}_0} \myblue{\left(\delta_r + \delta_n^\stoc \right) \cdot \|\bar{g} - \mu_0\|_2} + \myred{\|\bar{g} - \hat{\mu}\|_2^2 + (\delta_n^\stoc)^2 }\\
        &\qquad \qquad + \mygray{\left(\delta_r + \|\bar{g} - \mu_0\|_2 + \delta_n^\stoc + \sqrt\frac{t}{n}\right) \cdot \sqrt{\frac{t}{n}}}
    \end{split}
    \end{align} where $\tilde{C} = \poly(\const)$ and $\hat{\varphi}_{\mathsf{LF}, \theta} = \frac{1}{n} \sum_{i=1}^n \psi(\mu_0; Z_i) - \theta_0 + \varepsilon_i r_0(Z_i)$ is the empirical average of an influence function for $\theta_0$. Under the additional condition $\mathbb E[\varepsilon^2\mid Z]\equiv\sigma_\varepsilon^2$, this influence function is semi-parametrically efficient.
\end{theorem}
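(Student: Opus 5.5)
The plan is to mirror the \bo-branch argument behind \eqref{eq:plm-branch-o}, replacing the partial-linear constraint $\frac1n\sum_i a_iT_i=1$ by the general moment $\frac1n\sum_i\psi(f,Z_i)\approx\frac1n\sum_i a_if(Z_i)$. Two reference points are needed. The first is the oracle weights $\bar a_i=r_0(Z_i)$. The second is a fixed $\bar g\in\mathcal G_0$ attaining the infimum up to a factor of two.

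I decompose the error, using linearity of $\psi$ in $\mu$ and $Y_i=\mu_0(Z_i)+\varepsilon_i$:
\begin{align*}
\hat\theta-\theta_0=\frac1n\sum_{i=1}^n\big\{\psi(\mu_0,Z_i)-\theta_0\big\}+\frac1n\sum_{i=1}^n\varepsilon_i\hat a_i+\frac1n\sum_{i=1}^n\big\{\psi(\hat\mu-\mu_0,Z_i)-\hat a_i(\hat\mu-\mu_0)(Z_i)\big\}.
\end{align*}
\begin{itemize}
\item The noise term. I write $\frac1n\sum_i\varepsilon_i\hat a_i=\frac1n\sum_i\varepsilon_i r_0(Z_i)+\frac1n\sum_i\varepsilon_i(\hat a_i-\bar a_i)$. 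Conditional on $\{Z_i\}$ and on $\hat\mu,\hat r$, the weights $\hat a$ are fixed. The second piece is therefore a weighted sum of conditionally mean-zero bounded noises, and Hoeffding bounds it by $\sqrt{t/n}\,\|\hat a-\bar a\|_n$.
\item The last bracket. I split $\hat\mu-\mu_0=(\hat\mu-\bar g)+(\bar g-\mu_0)$.
\begin{itemize}
\item The $\hat\mu-\bar g\in\partial\mathcal G_0$ part is the stochastic term. It is controlled by the adversarial objective, giving $\lesssim\|\hat\mu-\bar g\|_n^2+(\delta_n^\stoc)^2+\lambda\|\hat a-\bar a\|_n^2$.
\item The $\bar g-\mu_0$ part is the approximation term. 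I compare $\hat a$ with $\bar a$ to write it as
\begin{align*}
\frac1n\sum_i\big\{\psi(h,Z_i)-r_0(Z_i)h(Z_i)\big\}-\frac1n\sum_i(\hat a_i-\bar a_i)h(Z_i),\qquad h=\bar g-\mu_0.
\end{align*}
The first piece is a centered i.i.d. average, by \eqref{eq:riesz-representer}. Bernstein together with \cref{cond:linear-functional}(b) bounds it by $\sqrt{t/n}\,\|h\|_2$. The second piece is at most $\|\hat a-\bar a\|_n\|h\|_n$ by Cauchy--Schwarz.
\end{itemize}
\end{itemize}

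The central ingredient is then a basic-inequality comparison of $\hat a$ with $\bar a$ in the penalized program. First I show that the oracle weights nearly satisfy the constraint:
\begin{align*}
\sup_{f\in\partial\mathcal G_0}\Big|\frac1n\sum_i\{\psi(f,Z_i)-r_0(Z_i)f(Z_i)\}\Big|-\|f\|_n^2\lesssim(\delta_n^\stoc)^2+\frac tn.
\end{align*}
Each summand has mean zero by the Riesz identity. I would prove this by a peeling argument over $\|f\|_2$, using the localized complexity bounds for $\partial\mathcal G_0$ and $(\partial\mathcal G_0)^\psi$ in \cref{cond:lf-all}(b), Talagrand's inequality, and the equivalence $\|f\|_n\asymp\|f\|_2$ for $\|f\|_2\ge\delta_n^\stoc$, which holds on an event of probability $1-e^{-n(\delta_n^\stoc)^2}$. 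This is the step I expect to be the main obstacle. The product $r_0 f$ and the class $(\partial\mathcal G_0)^\psi$ must both be localized at the same radius, and the bound must hold uniformly without star-shapedness.

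Given this, optimality of $\hat a$ against the feasible candidate $\bar a$ yields two consequences:
\begin{itemize}
\item $\lambda\|\hat a-\tilde a\|_n^2\le\lambda\|\bar a-\tilde a\|_n^2+O((\delta_n^\stoc)^2)$, hence $\|\hat a-\bar a\|_n\lesssim\|\tilde a-\bar a\|_n+\delta_n^\stoc/\sqrt\lambda$.
\item The adversarial sup at $\hat a$ is $\lesssim(\delta_n^\stoc)^2+\lambda\|\bar a-\tilde a\|_n^2$.
\end{itemize}
Next, $\|\tilde a-\bar a\|_n=\|\hat r-r_0\|_n\lesssim\delta_r+\sqrt{t/n}$ by Bernstein, since $\hat r$ is independent of $\mathcal D_1$ and bounded. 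I then convert empirical norms of $\hat\mu-\bar g$ and $\bar g-\mu_0$ to population norms: the first via the same localized equivalence, the second via a fixed-function Bernstein bound, each incurring a $t/n$ cost.

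Finally, I plug in $\lambda\asymp[\delta_n^\stoc/(\delta_r+\delta_n^\stoc)]^2$. This makes $\delta_n^\stoc/\sqrt\lambda\asymp\delta_r+\delta_n^\stoc$ and $\lambda\delta_r^2\lesssim(\delta_n^\stoc)^2$. Collecting terms gives the blue term $(\delta_r+\delta_n^\stoc)\|\bar g-\mu_0\|_2$, the red terms $\|\bar g-\hat\mu\|_2^2+(\delta_n^\stoc)^2$, and gray terms that are $\sqrt{t/n}$ times $(\delta_r+\|\bar g-\mu_0\|_2+\delta_n^\stoc+\sqrt{t/n})$. The union of the three events gives the stated probability.
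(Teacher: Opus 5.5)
Your proposal is correct and mirrors the paper's \texttt{O}-branch argument (Case~1 in the proof of \cref{thm:main-plm}), run with oracle weights $\bar a_i=r_0(Z_i)$. The steps match: near-feasibility of $\bar a$ from the localized complexities of $\partial\mathcal G_0$ and $(\partial\mathcal G_0)^\psi$, the basic inequality against $\bar a$, and the split of $\hat\mu-\mu_0$ through $\bar g$ into an adversarially controlled part and a Cauchy--Schwarz approximation part; the radius matching you flag as the main obstacle is immediate, since \cref{cond:linear-functional}(b) gives $\|\psi(f,\cdot)\|_2\le\const^{1/2}\|f\|_2$ and the complexity bound holds at every $\delta\ge\delta_n^{\stoc}$. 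One thing to tidy: state the oracle feasibility bound as $O((\delta_n^{\stoc})^2)$ on the $e^{-n(\delta_n^{\stoc})^2}$ event, as you in fact use it afterwards, because a bare $t/n$ there is divided by $\lambda$ in the basic inequality and leaves a term $\sqrt{t/n}\,\|\bar g-\mu_0\|_2(\delta_r+\delta_n^{\stoc})/\delta_n^{\stoc}$ that is not in \eqref{eq:lf-o}.
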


\begin{remark}[Two-learner specialization] \label{remark:lf-two-learner} 
Suppose $\hat{\mu}$ is an empirical least squares estimator over a hypothesis class $\mathcal{G}_\mu$ and we choose $\mathcal{G}_0 = \mathcal{G}_\mu$ in \cref{algo-tas-lf}. Define $\delta_\mu^\appr := \inf_{g\in \mathcal{G}_\mu} \|g - \mu_0\|_{L_2(\nu_z)}$, and $\delta_\mu := \delta_\mu^\appr + \delta_n^\stoc$. If both $\hat{\mu}$ and $\hat{r}$ can consistently estimate $\mu_0$ and $r_0$, respectively, then \eqref{eq:lf-o} yields
\begin{align}
\label{eq:error-lf-two-learner}
    |\hat{\theta} - \theta_0 - \hat{\varphi}_{\mathsf{LF}, \theta}| \lesssim \myblue{\delta_r \cdot \delta_\mu^\appr} + \myred{(\delta_\mu)^2} + \mygray{o(n^{-1/2})}
\end{align} Recall that DML has the nuisance remainder $\delta_\mu \cdot \delta_r + o(n^{-1/2})$ \citep{chernozhukov2018double, chernozhukov2026adversarial}. Thus, \our~is no worse up to constants when $\delta_\mu \le \delta_r$. Moreover, when $\delta_\mu \ll \delta_r$, i.e., the outcome function is easier to estimate, the use of \our~estimator gives strict improvement over the DML nuisance remainder when under-smoothing on $\mathcal{G}_\mu$ is adopted, i.e., $\delta_\mu^\appr \ll \delta_\mu \asymp \delta_n^\stoc$. This yields a strictly faster overall rate whenever $\delta_\mu \cdot \delta_r \gg n^{-1/2}$. 
\end{remark}

\begin{remark}[Asymptotic normality] \label{remark:lf-normality} Continue with the two-learner specialization in \cref{remark:lf-two-learner}, and suppose $\delta_r \cdot \delta_\mu^\appr + (\delta_\mu)^2 = o(n^{-1/2})$. Define $\sigma_0 := [\mathrm{Var}(\psi(\mu_0, Z_i) + \varepsilon \cdot r_0(Z))]^{1/2} > 0$, and the estimated influence values $\hat{\varphi}_i := \psi(\hat{\mu}, Z_i) - \hat{\theta} + (Y_i - \hat{\mu}(Z_i)) \cdot \hat{a}_i$. Then we have 
\begin{align*}
    \frac{\sqrt{n}(\hat{\theta} - \theta_0)}{\hat{\sigma}} \overset{d}{\to} \mathcal{N}(0, 1) \qquad \text{for} \qquad \hat{\sigma} = \sqrt{\frac{1}{n} \sum_{i=1}^n \hat{\varphi}_i^2} ~~\text{satisfying} ~~ |\hat{\sigma} - \sigma_0| = o_P(1).
\end{align*} 
\end{remark}

\subsection{The \texttt{R}-branch for ATE estimation}
\label{subsec:r-branch}

Let $\mathcal{D} = \{(X_i, T_i, Y_i)\}_{i=1}^{2n}$ be $2n$ i.i.d. observations generated from \begin{align}
\label{eq:model-ate}
\begin{split}
    T &= \pi_0(X) + u ~~~ \qquad \text{with} ~~\mathbb{E}[u|X] = 0, T\in\{0, 1\}, \\
    Y &= \mu_0(X, T) + \varepsilon \qquad \text{with} ~~\mathbb{E}[\varepsilon|X, T] = 0,
\end{split}
\end{align} where $X\in \mathcal{X} \subseteq \mathbb{R}^d$, and $\pi_0(x) = \mathbb{P}(T=1|X=x)$ is the propensity score. The target is the linear functional $\theta_0 = \mathbb{E} \left[ \mu_0(X, 1) - \mu_0(X, 0)\right]$. Under the usual consistency, unconfoundedness, and overlap assumptions, $\theta_0$ equals the average treatment effect in the potential outcome framework

\cref{algo-tas-ate-r} is motivated by and extends the central construction of \cite{wang2024debiasd}. Their debiased inverse propensity weighting (DIPW) estimator edits observation-specific outcome weights through cross-sample linear moment constraints under a sparse logistic propensity model. We replace these linear constraints with adversarial moment calibration over a generic reference class $\mathcal{G}_0$, allowing the same principle to be applied with generic black-box propensity learners. When $\hat{m}_i = \tilde{m}_i$, the estimator in Step 4 reduces algebraically to the usual AIPW estimator based on $(\hat\mu, \hat\pi)$. 

\begin{algorithm}
\caption{\our~for ATE, $\br$-branch}
\label{algo-tas-ate-r}
\begin{algorithmic}[1]
\State \textbf{Input:} Data split: $\mathcal{D}_1 = \{(X_i, T_i, Y_i)\}_{i=1}^n$, $\mathcal{D}_2 = \{(X_i, T_i, Y_i)\}_{i=n+1}^{2n}$; $\hat{\mu}, \hat{\pi}$ independent of $\mathcal{D}_1$, debiasing reference class $\mathcal{G}_0$, hyper-parameter $\lambda$.
\State Initialize debiasing weights $m\in \mathbb{R}^n$ with $m_i \gets \tilde{m}_i$ where $\tilde{m}_i = (1 - \hat{\pi}(X_i)) \hat{\mu}(X_i, 1) + \hat{\pi}(X_i) \hat{\mu}(X_i, 0)$. 
\State Fine-tune weights $m$ by adversarial estimation on $(\mathcal{G}_0, \mathcal{D}_1, \mathcal{D}_2)$ and get fine-tuned weights $\hat{m}$:
\begin{align*}
    \hat{m} \in &\argmin_{m\in \mathbb{R}^n} \frac{\lambda}{n} \sum_{i=1}^n |m_i - \tilde{m}_i|^2+ \sup_{g \in \mathcal{G}_0} \Phi_{\ate, \pi}(g, m) ~~\text{with} \\
    & \Phi_{\ate, \pi}(g, m) = \left|\frac{1}{n} \sum_{i=n+1}^{2n} \hat{Y}_i \cdot \Delta_g(X_i) - \frac{1}{n} \sum_{i=1}^n m_i \cdot \Delta_g(X_i) \right| - \frac{1}{n}\sum_{i=1}^n (g - \hat{\pi})^2(X_i),
\end{align*} where the pseudo-outcome $\hat{Y}_i$ and direction $\Delta_g$ are defined by $\hat{Y}_i = \frac{1 - \hat{\pi}(X_i)}{\hat{\pi}(X_i)} Y_i T_i + \frac{\hat{\pi}(X_i)}{1 - \hat{\pi}(X_i)} Y_i (1 - T_i)$ and $\Delta_g(X_i) = (g - \hat{\pi})(X_i) / [\hat{\pi}(X_i) (1 - \hat{\pi}(X_i))]$.
\State Assign $\hat{\theta} \gets \frac{1}{n} \sum_{i=1}^n (Y_i - \hat{m}_i) \left(\frac{T_i}{\hat{\pi}(X_i)} - \frac{1 - T_i}{1 - \hat{\pi}(X_i)}\right)$. 
\State \textbf{Output:} The final estimate $\hat{\theta}$. 
\end{algorithmic}
\end{algorithm}

The analysis of \cref{algo-tas-ate-r} uses two conditions. \cref{cond:ate} imposes boundedness and overlap on the data-generating process, while \cref{cond:ate-g} controls the reference class and the initial estimates.

\begin{condition}[Boundedness and overlap] \label{cond:ate} There exists a constant $\const \ge 2$ such that $\|\mu_0\|_\infty \lor |\varepsilon| \le \const$, and $\pi_0(x) \in [1/\const, 1-1/\const]$  for any $x\in \mathcal{X}$.
\end{condition}
\begin{condition} \label{cond:ate-g} Let $\nu_x$ be the distribution of $X$, the following holds for a constant $\const \ge 2$:
\begin{itemize}
    \item[(a)] The reference class satisfies $\mathcal{G}_0 \in \mathcal{HCS}(\delta_n^\stoc; n, \nu_x)$ and $\hat{\pi} \in \mathcal{G}_0$. Moreover, $g(x) \in [1/\const, 1-1/\const]$ for any $x\in \mathcal{X}$ and $g\in \mathcal{G}_0$. 
    \item[(b)] The initial estimates $(\hat{\mu}, \hat{\pi})$ are independent of $\mathcal{D}_1 = \{(X_i, T_i, Y_i)\}_{i=1}^n$, and $\|\hat{\mu}\|_\infty\le \const$. 
\end{itemize}
\end{condition}

\cref{thm:ate-r} gives the resulting oracle-type inequality for the estimator in \cref{algo-tas-ate-r}.

\begin{theorem} \label{thm:ate-r} Under \cref{cond:ate} and \cref{cond:ate-g}, if the sum of first-stage estimation errors $\|\hat{\pi} - \pi_0\|_{L_2(\nu_x)} + \sup_{w\in \{0, 1\}} \|\hat{\mu}(\cdot, w) - \mu_0(\cdot, w)\|_{L_2(\nu_x)} \le \delta_{all}$ for $\delta_{all} \in \mathbb{R}^+$ and we choose $\lambda \asymp [\delta_n^\stoc / (\delta_{all} + \delta_n^\stoc)]^2$, then for any $t\in [1, n]$, the estimate $\hat{\theta}$ returned by \cref{algo-tas-ate-r} satisfies, with probability at least $1-3e^{-t} - e^{-n(\delta_n^\stoc)^2}$, 
\begin{align}
\label{eq:ate-error}
\begin{split}
    \tilde{C}^{-1} |\hat{\theta} - \theta_0 - \hat{\varphi}_{\ate}| &\le \inf_{\bar{g} \in \mathcal{G}_0} \myblue{(\delta_{all} + \delta_n^\stoc) \cdot \|\bar{g} - \pi_0\|_2} + \myred{\|\bar{g} - \hat{\pi}\|_2^2 + \|\pi_0 - \hat{\pi}\|_2^2 + (\delta_n^\stoc)^2} \\
    &\qquad \qquad + \mygray{\sqrt{\frac{t}{n}} \left[\delta_{all} + \delta_n^\stoc + \|\bar{g} - \pi_0\|_2 + \|\hat{\pi} - \pi_0\|_2^{1/2} + \left(\frac{t}{n}\right)^{1/4}\right]}
\end{split}
\end{align} where $\tilde{C} = \poly(\const)$ and $\hat{\varphi}_{\ate} = \frac{1}{n} \sum_{i=1}^n \{\mu_0(X_i, 1) - \mu_0(X_i, 0)\} + \varepsilon_i \cdot u_i / \{\pi_0(X_i) \cdot (1 - \pi_0(X_i))\} - \theta_0$ is the empirical average of the semiparametric efficient influence function for the ATE.
\end{theorem}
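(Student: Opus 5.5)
We prove \cref{thm:ate-r} by mirroring the \br-branch analysis behind \eqref{eq:plm-branch-r}. Throughout, write $\bar{m}_i := (1-\pi_0(X_i))\mu_0(X_i,1) + \pi_0(X_i)\mu_0(X_i,0)$ for the oracle outcome-side weights and $\hat{w}_i := T_i/\hat{\pi}(X_i) - (1-T_i)/(1-\hat{\pi}(X_i))$.

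\textbf{Step 1: algebraic decomposition.} Using $Y_i = \mu_0(X_i,T_i) + \varepsilon_i$ and $T_i = \pi_0(X_i) + u_i$, expand $\hat{\theta} - \theta_0 - \hat{\varphi}_{\ate}$. A direct computation gives $\mathbb{E}[(\mu_0(X,T) - \bar{m}(X))\, w_0 \mid X] = \mu_0(X,1) - \mu_0(X,0)$ for the oracle weight $w_0$, which is why $\bar{m}$ is the right target. The remainder then splits into four groups of terms.
\begin{itemize}
\item[(i)] The noise term $\frac1n\sum \varepsilon_i (\hat{w}_i - w_{0,i})$. It is conditionally mean zero and is $O(\sqrt{t/n}\,\|\hat\pi - \pi_0\|_2)$ by Bernstein's inequality.
\item[(ii)] Terms that are second order in $\hat\pi - \pi_0$. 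Writing $\hat{w} - w_0$ as linear in $(\hat\pi - \pi_0)$ plus a quadratic remainder, these give $\|\hat\pi - \pi_0\|_2^2$ plus a concentration term. The $\|\hat\pi-\pi_0\|_2^{1/2}(t/n)^{1/2}$ and $(t/n)^{1/4}$ pieces come from controlling empirical norms via Bernstein with sup-norm bounded weights.
\item[(iii)] $\mathsf{T}_n = \frac1n\sum (\bar m_i - \hat m_i)\, u_i/[\hat\pi(1-\hat\pi)](X_i)$. Conditional on $\mathcal{D}_2$ and $\{X_i\}_{i=1}^n$, the vector $\hat m$ is fixed, because it depends on $\mathcal D_1$ only through covariates. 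So $\mathsf T_n \lesssim \sqrt{t/n}\,\|\hat m - \bar m\|_n$.
\item[(iv)] The bias term $\frac1n\sum(\bar m_i - \hat m_i)\,\Delta_{\pi_0}(X_i)$, with $\Delta_g$ as in \cref{algo-tas-ate-r}. Pick $\bar g \in \mathcal G_0$ and split it as $\Delta_{\pi_0} = \Delta_{\pi_0} - \Delta_{\bar g} + \Delta_{\bar g}$, which yields $\mathsf T_a$ and $\mathsf T_s$.
\end{itemize}

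\textbf{Step 2: the two properties of $\hat m$.}
\begin{itemize}
\item[(a)] \emph{Closeness.} Show that $\|\hat m - \bar m\|_n \lesssim \delta_{all} + \delta_n^\stoc$. Compare the objective at $\hat m$ with its value at $\bar m$, and use $\|\tilde m - \bar m\|_n \lesssim \delta_{all}$. This requires showing that $\sup_{g}\Phi_{\ate,\pi}(g,\bar m) \lesssim (\delta_n^\stoc)^2 + \|\hat\pi - \pi_0\|_2^2$ with high probability. The argument has two parts. First, note that $\mathbb{E}[\hat Y \mid X] = \bar m(X) + O(|\hat\pi - \pi_0|)$. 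The discrepancy $(\hat\pi-\pi_0)\cdot\Delta_g$ is absorbed by AM--GM into $\frac1n\sum (g-\hat\pi)^2 + \|\hat\pi-\pi_0\|^2$. Second, apply the localized Rademacher bound of \cref{def:hcs} to the centered cross-sample empirical process indexed by $\partial\mathcal G_0$, using $\hat\pi\in\mathcal G_0$; a peeling argument gives the $-\|f\|_n^2$ compensation. The ridge penalty then gives $\lambda\|\hat m-\tilde m\|_n^2 \le \lambda\|\bar m-\tilde m\|_n^2 + (\delta_n^\stoc)^2 + \|\hat\pi-\pi_0\|^2$.
\item[(b)] \emph{Instance-dependent bound.} Show that $|\frac1n\sum(\bar m_i - \hat m_i)\Delta_g(X_i)| \lesssim \|g - \hat\pi\|_n^2 + (\delta_n^\stoc)^2 + \|\hat\pi-\pi_0\|_2^2 + \lambda\,\delta_{all}^2$ for every $g\in\mathcal G_0$. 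This follows from the objective value at $\hat m$ together with the same concentration bound applied at $\bar m$.
\end{itemize}

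\textbf{Step 3: assembly.} The term $\mathsf T_s$ uses Step 2(b) with $g = \bar g$; converting $\|\cdot\|_n$ to $\|\cdot\|_2$ by concentration gives $\|\bar g - \hat\pi\|_2^2$. The term $\mathsf T_a$ is handled by Cauchy--Schwarz with Step 2(a). Here the $\lambda$-dependent bound reads $\|\hat m-\bar m\|_n \lesssim \delta_{all} + \delta_n^\stoc/\sqrt\lambda$, so $\mathsf T_a \lesssim \|\bar g - \pi_0\|_2\,(\delta_{all} + \delta_n^\stoc/\sqrt\lambda)$. Choosing $\lambda \asymp [\delta_n^\stoc/(\delta_{all}+\delta_n^\stoc)]^2$ balances $\lambda\delta_{all}^2$ against $\delta_n^\stoc\|\bar g-\pi_0\|/\sqrt\lambda$. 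This produces the blue term $(\delta_{all} + \delta_n^\stoc)\|\bar g - \pi_0\|_2$ and the remaining red terms. Finally, take the infimum over $\bar g$ and union-bound the three high-probability events.

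\textbf{Main obstacle.} The main obstacle is Step 2(a)'s uniform bound on $\Phi_{\ate,\pi}(\cdot,\bar m)$. The pseudo-outcome is built from $\mathcal{D}_2$ while the $m$-side lives on $\mathcal{D}_1$, so two independent empirical processes over $\partial\mathcal G_0$ must be controlled, each with localization in $\|\cdot\|_n$ on $\mathcal D_1$. The $\mathcal D_2$ process is localized in its own empirical norm, so I would relate the two norms through a uniform ratio-type inequality valid for $\|f\|_2\gtrsim\delta_n^\stoc$. I would also track the $\hat\pi$-bias of the pseudo-outcome carefully so that it enters only quadratically.
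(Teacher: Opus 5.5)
There is a genuine gap between your Step 2(a) and Step 3. Your Step 1 decomposition and the instance-dependent bound of Step 2(b) are sound; the trouble is the closeness bound.

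\textbf{Where the argument breaks.} In Step 2(a) you compare the objective at $\hat m$ with its value at $\bar m$. As you note, $\mathsf{H}(\bar m):=\sup_g\Phi_{\mathrm{ATE},\pi}(g,\bar m)$ carries a term of order $\|\hat\pi-\pi_0\|_2^2$. This term is really there, not an artefact of the bound. With $\hat\pi$ frozen, the pseudo-outcome has conditional mean $m^\star(X)=\frac{1-\hat\pi}{\hat\pi}\pi_0\,\mu_0(X,1)+\frac{\hat\pi}{1-\hat\pi}(1-\pi_0)\,\mu_0(X,0)$. The difference $m^\star-\bar m=(\pi_0-\hat\pi)\{\mu_0(\cdot,1)/\hat\pi-\mu_0(\cdot,0)/(1-\hat\pi)\}$ is first order, so for a rich $\mathcal G_0$ the supremum picks up $\asymp\|\hat\pi-\pi_0\|_2^2$. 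Dividing your basic inequality by $\lambda$ therefore gives
\[
\|\hat m-\bar m\|_n\lesssim \delta_{all}+\frac{\delta_n^{\mathrm{stoc}}+\|\hat\pi-\pi_0\|_2}{\sqrt{\lambda}},
\]
not $\delta_{all}+\delta_n^{\mathrm{stoc}}/\sqrt{\lambda}$. The $\|\hat\pi-\pi_0\|_2/\sqrt{\lambda}$ piece is silently dropped between Step 2(a) and Step 3.

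With $\lambda\asymp[\delta_n^{\mathrm{stoc}}/(\delta_{all}+\delta_n^{\mathrm{stoc}})]^2$, this piece equals $\|\hat\pi-\pi_0\|_2(\delta_{all}+\delta_n^{\mathrm{stoc}})/\delta_n^{\mathrm{stoc}}$. Your Cauchy--Schwarz bound on $\mathsf{T}_a$ then exceeds the blue term $(\delta_{all}+\delta_n^{\mathrm{stoc}})\|\bar g-\pi_0\|_2$ by the factor $\|\hat\pi-\pi_0\|_2/\delta_n^{\mathrm{stoc}}$, and the same spurious factor multiplies your bound on $\mathsf{T}_n$. This factor is unbounded whenever $\|\hat\pi-\pi_0\|_2\gg\delta_n^{\mathrm{stoc}}$, for example when $\hat\pi$ is not under-smoothed and $\|\hat\pi-\pi_0\|_2\asymp\inf_{g}\|g-\pi_0\|_2$. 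In that regime the term $\|\bar g-\pi_0\|_2\|\hat\pi-\pi_0\|_2\,\delta_{all}/\delta_n^{\mathrm{stoc}}$ is not dominated by anything on the right-hand side of the theorem.

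\textbf{How to repair it.} Compare against the vector that actually satisfies the cross-sample moments, $m^\star_i:=m^\star(X_i)$, instead of $\bar m$.
\begin{itemize}
\item Both averages in $\Phi_{\mathrm{ATE},\pi}(g,m^\star)$ share the mean $\mathbb{E}[m^\star(X)\Delta_g(X)]$. Your peeling argument then gives $\mathsf{H}(m^\star)\lesssim(\delta_n^{\mathrm{stoc}})^2$, with no $\|\hat\pi-\pi_0\|_2^2$ term.
\item Since $\|\hat\pi-\pi_0\|_2\le\delta_{all}$, we have $\|m^\star-\tilde m\|_n\le\|m^\star-\bar m\|_n+\|\bar m-\tilde m\|_n\lesssim\delta_{all}+\sqrt{t/n}$.
\item Together these yield $\|\hat m-\bar m\|_n\lesssim\delta_{all}+\delta_n^{\mathrm{stoc}}/\sqrt{\lambda}+\sqrt{t/n}$ and $\mathsf{H}(\hat m)\lesssim(\delta_n^{\mathrm{stoc}})^2+\lambda(\delta_{all}^2+t/n)$.
\item Run the instance-dependent bound for $m^\star-\hat m$. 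In $\mathsf{T}_s$, pay separately for $\bigl|\frac{1}{n}\sum_i(\bar m_i-m^\star_i)\Delta_{\bar g}(X_i)\bigr|\lesssim\|\hat\pi-\pi_0\|_n\|\bar g-\hat\pi\|_n$. By AM--GM, this is where $\|\pi_0-\hat\pi\|_2^2$ legitimately enters the red term, alongside the second-order score remainder in your item (ii).
\end{itemize}
In short, absorbing the pseudo-outcome bias by AM--GM is fine inside the instance-dependent bound, but it must never pass through the $1/\lambda$ closeness step.

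\textbf{A further point to address.} Centring the $\mathcal D_2$ average at $\mathbb{E}[m^\star\Delta_g]$ treats $\hat\pi$ as fixed on $\mathcal D_2$. The hypotheses only make $\hat\pi$ independent of $\mathcal D_1$, so if $\hat\pi$ is fitted on $\mathcal D_2$ you must also make this concentration step uniform in $\hat\pi$. Indexing by $g-\hat\pi\in\partial\mathcal G_0$ handles that factor, but the multiplier $YT/\hat\pi^2+Y(1-T)/(1-\hat\pi)^2$ in $\hat Y\Delta_g$ also depends on $\hat\pi$.
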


\cref{thm:ate-r} also yields direct analogues of \cref{remark:lf-two-learner} and \cref{remark:lf-normality}. Suppose that $\hat\pi$ is an empirical least squares estimator over $\mathcal G_\pi=\mathcal G_0$, and define
$\delta_\pi^{\mathrm{appr}}:=\inf_{g\in\mathcal G_\pi}\|g-\pi_0\|_{L_2(\nu_x)}$ and $\delta_\pi:=\delta_\pi^{\mathrm{appr}}+\delta_n^{\mathrm{stoc}}$. 
If the first stage errors are consistent, then \eqref{eq:ate-error} yields
\begin{align*}
\left|\hat\theta-\theta_0-\hat\phi_{\mathrm{ATE}}\right|\lesssim\myblue{\delta_{\mathrm{all}} \cdot \delta_\pi^{\mathrm{appr}}}+\myred{\delta_\pi^2}+\mygray{o(n^{-1/2})}.
\end{align*}
Consequently, asymptotic normality follows when the displayed nuisance remainder is $o(n^{-1/2})$.

%% file: 3-proof.tex
\section{Proof of Theorem \ref{thm:main-plm}}
\label{sec:proof-main}

We first introduce notation used only in the proof of \cref{thm:main-plm}. We let $\|f\|_2 = \sqrt{\mathbb{E}[f^2(X)]}$ and $\|f\|_n = \sqrt{\frac{1}{n} \sum_{i=1}^n f^2(X_i)}$, and adopt $\lesssim$ to omit the $\poly(\const)$ factors. The following proposition characterizes the estimation error of the initial estimate $(\hat{\beta}, \hat{\mu}, \hat{\pi})$, which is a restatement of Proposition S2.3 in \cite{gu2026optimally}. The proof is standard; one can directly refer to the proof therein. 

\begin{proposition}
\label{prop:plm-fact}
    The following event occurs with probability at least $1-e^{-n(\delta_n^\stoc)^2}$ for some constant $\tilde{C}_0 = \poly(\const)$,
    \begin{align*}
        \mathcal{C}_0 := \left\{|\hat{\beta} - \beta_0| + \|\hat{\mu} - \mu_0\|_2  \le \tilde{C}_0 \left(\delta_\mu^\appr + \delta_n^\stoc\right),  \|\hat{\pi} - \pi_0\|_2  \le \tilde{C}_0 \left(\delta_\pi^\appr + \delta_n^\stoc\right) \right\}.
    \end{align*}
\end{proposition}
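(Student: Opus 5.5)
We prove \cref{prop:plm-fact} with a standard localized least-squares argument, handling the two regressions separately and working conditionally on nothing but $\mathcal{D}_2$. This works because the initial estimates in \eqref{eq:est1-step1} are built only from $\mathcal{D}_2$, which is $n$ i.i.d. draws.

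\emph{Treatment regression.} Fix $\bar g_\pi\in\mathcal G$ with $\|\bar g_\pi-\pi_0\|_2\le 2\delta_\pi^\appr$. The empirical optimality of $\hat\pi$ gives the basic inequality
\[
\|\hat\pi-\pi_0\|_{n,2}^2\le \|\bar g_\pi-\pi_0\|_{n,2}^2+\tfrac{2}{n}\sum_{i=n+1}^{2n}u_i(\hat\pi-\bar g_\pi)(X_i),
\]
where $\|\cdot\|_{n,2}$ is the empirical norm on $\mathcal D_2$. The steps are then:
\begin{enumerate}
\item Bound $\|\bar g_\pi-\pi_0\|_{n,2}^2\lesssim(\delta_\pi^\appr)^2+\delta_\pi^\appr\sqrt{t/n}+t/n$ with probability $1-e^{-t}$, using Bernstein's inequality and boundedness.
\item Control the multiplier process uniformly over $h=\hat\pi-\bar g_\pi\in\partial\mathcal G$ by a peeling argument. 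Symmetrization and contraction (since $|u|\le\const$) reduce it to $\mathsf R_{n,\nu_x}(\delta;\partial\mathcal G)$, and Talagrand's concentration inequality handles the deviation. \cref{def:hcs} gives $\mathsf R_{n,\nu_x}(\delta;\partial\mathcal G)\le\const\,\delta\,\delta_n^\stoc$ for all $\delta\ge\delta_n^\stoc$. Taking $t\asymp n(\delta_n^\stoc)^2$ and summing over dyadic shells yields, with probability $1-e^{-n(\delta_n^\stoc)^2}/2$,
\[
\tfrac1n\textstyle\sum u_ih(X_i)\lesssim \delta_n^\stoc\|h\|_2+(\delta_n^\stoc)^2 \quad\text{for all } h\in\partial\mathcal G.
\]
\item Pass between empirical and population norms on $\partial\mathcal G$. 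The localized Rademacher bound together with $\|h\|_\infty\le2\const$ gives the standard ratio-type inequality $\big|\|h\|_{n,2}^2-\|h\|_2^2\big|\le\tfrac12\|h\|_2^2+C(\delta_n^\stoc)^2$ on the same kind of event.
\end{enumerate}
Combining these with the basic inequality and solving the resulting quadratic inequality in $\|\hat\pi-\bar g_\pi\|_2$ gives $\|\hat\pi-\pi_0\|_2\lesssim\delta_\pi^\appr+\delta_n^\stoc$. The $t$-dependent terms are absorbed because $\delta_n^\stoc\ge\sqrt{\log n/n}$, so we may take $t=n(\delta_n^\stoc)^2$ throughout.

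\emph{Outcome regression.} Compare $(\hat\beta,\hat\mu)$ with $(\beta_0,\bar g_\mu)$, where $\|\bar g_\mu-\mu_0\|_2\le2\delta_\mu^\appr$. Write the error function as $e(x,t)=(\hat\beta-\beta_0)t+(\hat\mu-\bar g_\mu)(x)$. Localization now runs over the enlarged class $\{\beta t+h(x):|\beta|\le2\const,\ h\in\partial\mathcal G\}$. Its local Rademacher complexity is at most that of $\partial\mathcal G$ plus $O(\delta/\sqrt n)$ for the one-dimensional $\beta t$ part, so its critical radius is still $\lesssim\delta_n^\stoc$. The same argument gives $\|\beta_0 T+\mu_0(X)-\hat\beta T-\hat\mu(X)\|_{L_2}\lesssim\delta_\mu^\appr+\delta_n^\stoc$, and we then separate the two components:
\begin{enumerate}
\item Since $T=\pi_0(X)+u$ with $\mathbb E[u\mid X]=0$,
\[
\|(\hat\beta-\beta_0)T+(\hat\mu-\mu_0)\|_2^2=(\hat\beta-\beta_0)^2\mathbb E[u^2]+\|(\hat\beta-\beta_0)\pi_0+\hat\mu-\mu_0\|_2^2.
\]
\item By \cref{cond:reg-plm}(b), $\mathbb E[u^2]\ge1/\const$, so the first term bounds $|\hat\beta-\beta_0|$.
\item The triangle inequality together with $\|\pi_0\|_\infty\le\const$ then bounds $\|\hat\mu-\mu_0\|_2$.
\end{enumerate}

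The main obstacle is this last regression. The localization must be carried out for the joint class including the $\beta T$ direction, and the identifiability step has to convert a joint $L_2$ bound into separate bounds on $\hat\beta$ and $\hat\mu$. The orthogonal decomposition above resolves this. A union bound over the two regressions, after adjusting constants in the exponents, gives total failure probability at most $e^{-n(\delta_n^\stoc)^2}$ with $\tilde C_0=\poly(\const)$.
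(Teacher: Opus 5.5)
Your proposal is correct. It is the standard localized least-squares argument (basic inequality, peeling with the localized Rademacher bound, empirical-to-population norm comparison, then identification via $\mathbb{E}[u^2]\ge 1/\const$), which the paper relies on when it defers this statement to Proposition S2.3 of \cite{gu2026optimally} rather than reproving it. One point to make explicit: your claim that the enlarged class $\{\beta t+h(x)\}$ has critical radius $\lesssim\delta_n^{\stoc}$ already needs the orthogonal decomposition $\|\beta T+h(X)\|_2^2=\beta^2\mathbb{E}[u^2]+\|\beta\pi_0+h\|_2^2$, because that is what shows $\|\beta T+h(X)\|_2\le\delta$ forces $|\beta|\lesssim\delta$ and $\|h\|_2\lesssim\delta$, so invoke it at the localization step as well as at the final identification step.
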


Without loss of generality, we consider the case where $\delta_\mu^\appr \ge n^{-100}$ and $\delta_\pi^\appr \ge n^{-100}$. Otherwise, define $\bar{\delta}_\mu^\appr \gets \delta_\mu^\appr + n^{-100}$ and $\bar{\delta}_\pi^\appr \gets \delta_\pi^\appr + n^{-100}$, we can prove the corresponding branch-wise bound with \(\bar\delta_h^{\mathrm{appr}}\) in place of \(\delta_h^{\mathrm{appr}}\), using the facts that (1) $\bar\delta_h^{\mathrm{appr}}$ remains an upper bound on the corresponding approximation error; and (2) $\lambda \ge (\delta_n^\stoc)^2 \ge \log(n)/n$. 

We will prove in the two subsequent subsections
\begin{align*}
    |\hat{\theta}_\mu - \theta_0 - \hat{\varphi}_{\mathsf{PLM}}| &\lesssim \delta^\star + \lambda (\delta_\pi^\appr)^2 + \frac{\delta_n^\stoc \cdot \delta_\mu^\appr}{\sqrt{\lambda}} ~~~~\text{if}~~~~ \delta_\mu^\appr \le \delta_\pi^\appr \\
    |\hat{\theta}_\pi - \theta_0 - \hat{\varphi}_{\mathsf{PLM}}| &\lesssim \delta^\star + \lambda (\delta_\mu^\appr)^2 + \frac{\delta_n^\stoc \cdot \delta_\pi^\appr}{\sqrt{\lambda}} ~~~~\text{if}~~~~ \delta_\mu^\appr > \delta_\pi^\appr
\end{align*} where $\delta^\star = \delta_\pi^\appr \cdot \delta_\mu^\appr + (\delta_n^\stoc)^2 + \sqrt{\frac{t}{n}} \cdot \left[\delta_\mu^\appr + \delta_\pi^\appr + \frac{\delta_n^\stoc}{\sqrt{\lambda}} + \sqrt{\frac{t}{n}}\right]$. Combining the separate error bounds with the estimator choice criterion \eqref{eq:est1-est} concludes the proof. 

\subsection{Proof of {\sc Case 1}}

Denote $\mathsf{H}_\mu(a) = \sup_{\beta, f\in \partial \mathcal{G}} \Phi_\mu(f, \beta, a)$ for any $a\in \mathbb{R}^n$ and $\bar{D} = \frac{1}{n} \sum_{l=1}^n T_l (T_l - \pi_0(X_l))$. Define $\bar{a} \in \mathbb{R}^n$ be such that $\bar{a}_i = (T_i - \pi_0(X_i)) / \bar{D}$ if $\bar{D} \neq 0$ and $\bar{a}_i \equiv 0$ otherwise. With a slight abuse of notation, we will also use $\|a - a'\|_n = \{\frac{1}{n} \sum_{i=1} (a_i - a_i')^2\}^{1/2}$ for $a, a' \in \{\bar{a}, \tilde{a}, w, \hat{a}\} \subseteq \mathbb{R}^n$. We first pick $\bar{g} \in \mathcal{G}$ such that $\|\bar{g} - \mu_0\|_2 \le 2\delta_\mu^\appr$ by the definition of the approximation error in \eqref{eq:error-appr-g}. Here, we first define the high-probability event used to derive our error rate, which follows directly from the standard empirical process arguments and concentration inequalities. 

\begin{proposition}
\label{prop:plm-fact-mu}
    Under the event $\mathcal{C}_0$ in \cref{prop:plm-fact}, assume $\delta_n^\stoc \tilde{C}_0 \le 1$ for some $\tilde{C}_0 = \poly(\const)$. For any $t\in [1, n]$, the following event occurs with probability at least $1-e^{-n(\delta_n^\stoc)^2} - e^{-t}$ for some $\tilde{C}_1 = \poly(\const)$:
    \begin{align}
        \mathsf{H}_\mu(\bar{a}) &\le \tilde{C}_1 [\delta_n^\stoc]^2, \label{eq:fact-1-h}\\
        \|\bar{g} - \hat{\mu}\|_n^2 & \le \tilde{C}_1 \left[(\delta_\mu^\appr + \delta_n^\stoc)^2 + (t/n)\right], \label{eq:fact-1-g}\\
        \sup_{i\in [n]} |\bar{a}_i - w_i| &\le \tilde{C}_1 \cdot (t/n)^{1/2}, \label{eq:fact-1-w}\\
        \|\bar{a} - \tilde{a}\|_n &\le \tilde{C}_1 \left[(t/n)^{1/2} + \delta_\pi^\appr + \delta_n^\stoc \right], \label{eq:fact-1-a}\\
        \max\left\{\sqrt{\frac{n}{t}}\cdot \left|\frac{1}{n} \sum_{i=1}^n (\mu_0 - \bar{g}) (X_i)\cdot w_i \right|, \|\bar{g} - \mu_0\|_n\right\} &\le \tilde{C}_1 \left[\delta_\mu^\appr + \sqrt{\frac{t}{n}}\right]. \label{eq:fact-1-mu}
    \end{align}
\end{proposition}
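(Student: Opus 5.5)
The plan is to prove the five bounds one at a time on $\mathcal{C}_0$ and take a union bound. Each bound either uses independence of $\mathcal{D}_2$-fitted objects from $\mathcal{D}_1$, or a uniform empirical process bound over $\partial\mathcal{G}$ that follows from the critical-radius definition in \cref{def:hcs}. First, I will handle the denominator $\bar D=\frac1n\sum_l T_l u_l=\frac1n\sum_l(\pi_0(X_l)u_l+u_l^2)$. It has mean $\mathbb{E}[u^2]\ge 1/\const$ and bounded summands. By Hoeffding, $|\bar D-\mathbb{E}[u^2]|\lesssim\sqrt{t/n}$ with probability $1-e^{-t}$, so for $t\le n/\tilde C$ we get $\bar D\ge 1/(2\const)$. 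Then $|\bar a_i-w_i|=|u_i|\,|\bar D^{-1}-\mathbb{E}[u^2]^{-1}|\lesssim\sqrt{t/n}$ uniformly in $i$, which gives \eqref{eq:fact-1-w}.

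Next come the two fixed-function bounds. Since $\bar g$ is deterministic and $w_i$ has conditional mean zero given $X_i$, Hoeffding/Bernstein gives $|\frac1n\sum(\mu_0-\bar g)(X_i)w_i|\lesssim\sqrt{t/n}$. Bernstein applied to $(\bar g-\mu_0)^2$, whose mean is at most $4(\delta_\mu^\appr)^2$ and which is bounded, gives $\|\bar g-\mu_0\|_n^2\lesssim(\delta_\mu^\appr)^2+t/n$. Together these yield \eqref{eq:fact-1-mu}. For \eqref{eq:fact-1-a}, on the event $\bar D\ge 1/(2\const)$ I expect the truncation in $\tilde a$ to be inactive or harmless. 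I will write $\tilde a_i-\bar a_i=(\pi_0-\hat\pi)(X_i)/\tilde D+u_i(\tilde D^{-1}-\bar D^{-1})$, where $\tilde D$ is the truncated denominator. Here $\hat\pi$ is independent of $\mathcal{D}_1$. By Bernstein conditional on $\mathcal{D}_2$, $\|\hat\pi-\pi_0\|_n\lesssim\|\hat\pi-\pi_0\|_2+\sqrt{t/n}$. Also $|\tilde D-\bar D|\le|\frac1n\sum T_l(\hat\pi-\pi_0)(X_l)|\lesssim\|\hat\pi-\pi_0\|_n$, and the truncation at $(2\const)^{-1}$ is $1$-Lipschitz. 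On $\mathcal{C}_0$ this gives \eqref{eq:fact-1-a}.

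For \eqref{eq:fact-1-g}, the function $\bar g-\hat\mu$ is fixed given $\mathcal{D}_2$ and has $\|\bar g-\hat\mu\|_2\lesssim\delta_\mu^\appr+\delta_n^\stoc$ on $\mathcal{C}_0$. Conditional Bernstein on its square, which is bounded by $4\const^2$, gives the bound. Alternatively, one can use the uniform comparison $\|f\|_n^2\le 2\|f\|_2^2+C(\delta_n^\stoc)^2$ over $\partial\mathcal{G}$, which holds with probability $1-e^{-n(\delta_n^\stoc)^2}$ by the localized Rademacher bound and Talagrand's inequality.

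The main obstacle is \eqref{eq:fact-1-h}, i.e. $\sup_{\beta,f}\{|\frac1n\sum(\beta T_i+f(X_i))\bar a_i-\beta|-\|f\|_n^2\}\lesssim(\delta_n^\stoc)^2$. The $\beta$-part vanishes identically, because $\frac1n\sum T_i\bar a_i=1$ when $\bar D\ne0$. This reduces the claim to $|\frac1n\sum f(X_i)u_i|/\bar D\le\|f\|_n^2+C(\delta_n^\stoc)^2$ uniformly over $f\in\partial\mathcal{G}$. I will prove the multiplier inequality $|\frac1n\sum f(X_i)u_i|\lesssim\delta_n^\stoc\|f\|_2+(\delta_n^\stoc)^2$ uniformly. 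The tool is a peeling argument over shells $\|f\|_2\in[2^k\delta_n^\stoc,2^{k+1}\delta_n^\stoc]$: use symmetrization and the contraction principle with bounded $u_i$ to get the local Rademacher bound $\mathsf{R}_n(\delta)\le\const\delta\,\delta_n^\stoc$, then apply Talagrand's concentration on each shell with deviation level $n(\delta_n^\stoc)^2 2^{2k}$. Summing the shell failure probabilities gives $e^{-n(\delta_n^\stoc)^2}$. Combining this with $\|f\|_2^2\le2\|f\|_n^2+C(\delta_n^\stoc)^2$, obtained by the same peeling for $f^2$, and applying AM-GM to $\delta_n^\stoc\|f\|_2$ absorbs the linear term into $\|f\|_n^2$, with a constant adjustment from $\bar D^{-1}\le2\const$. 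This matches the detailed justification cited from Section 2.2 of \cite{gu2026optimally}. Finally, I will union-bound the $O(1)$ Hoeffding/Bernstein events (probability $e^{-t}$ after rescaling $t$ by constants) and the uniform events ($e^{-n(\delta_n^\stoc)^2}$).
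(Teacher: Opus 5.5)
Your overall architecture is the standard route the paper has in mind. You condition on $\mathcal D_2$ for everything built from $(\hat\mu,\hat\pi)$, use scalar Hoeffding/Bernstein bounds for $\bar D$, \eqref{eq:fact-1-w}, \eqref{eq:fact-1-a}, \eqref{eq:fact-1-g}, and use $\frac1n\sum_i T_i\bar a_i=1$ to remove the $\beta$-part of $\mathsf H_\mu(\bar a)$. Then a uniform multiplier bound plus Young's inequality handles \eqref{eq:fact-1-h}; compare the proof of \eqref{eq:plm-h-pi}, which uses the analogous bounds \eqref{eq:fact-2-gxi}--\eqref{eq:fact-2-g2}, linear in $\|f\|_2$. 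Your treatment of the truncation in $\tilde a$, via the $1$-Lipschitz map $x\mapsto x\vee(2\const)^{-1}$, is fine. However, two quantitative steps, as written, do not deliver the stated bounds.

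First, for \eqref{eq:fact-1-mu} you only claim $|\frac1n\sum_i(\mu_0-\bar g)(X_i)w_i|\lesssim\sqrt{t/n}$. The proposition requires $\lesssim\sqrt{t/n}\,[\delta_\mu^\appr+\sqrt{t/n}]$. The Hoeffding-level bound misses the factor $\delta_\mu^\appr$, and that factor is what makes this term part of the higher-order $\mathsf{Res}$ and yields $\hat\theta-\theta_0=\hat\varphi_{\mathsf{PLM}}+o_P(n^{-1/2})$. You need a variance-sensitive bound. One option is Bernstein with $\mathrm{Var}\le\const^4\|\mu_0-\bar g\|_2^2\lesssim(\delta_\mu^\appr)^2$. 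Another is Hoeffding conditional on $X_{1:n}$, giving $\|\mu_0-\bar g\|_n\sqrt{t/n}$, combined with your bound on $\|\bar g-\mu_0\|_n$.

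Second, the peeling for \eqref{eq:fact-1-h} is miscalibrated. Take failure exponent $s_k=n(\delta_n^\stoc)^2 4^k$ on the shell $\|f\|_2\in(2^{k-1}\delta_n^\stoc,2^k\delta_n^\stoc]$. Then the Talagrand deviation term is of order $\sigma_k\sqrt{s_k/n}\lesssim\const\,4^k(\delta_n^\stoc)^2\lesssim\const\|f\|_2^2$. This is quadratic in $\|f\|_2$ with a constant you do not control, not $\delta_n^\stoc\|f\|_2$ as you claim. After multiplying by $\bar D^{-1}\le 2\const$, this term cannot be absorbed by $-\|f\|_n^2$, whose coefficient in $\Phi_\mu$ is exactly one. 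So $\mathsf H_\mu(\bar a)\lesssim(\delta_n^\stoc)^2$ does not follow, and the same problem affects your comparison $\|f\|_2^2\le 2\|f\|_n^2+C(\delta_n^\stoc)^2$.

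The fix is to use an exponent $s\asymp n(\delta_n^\stoc)^2$ on every shell. Since $\|f\|_2\le 2\const$ and $\delta_n^\stoc\ge\sqrt{\log(n)/n}$, there are only $O(\log n)$ shells, and $n(\delta_n^\stoc)^2\ge\log n$ absorbs the union-bound factor. The deviation is then $\lesssim 2^k(\delta_n^\stoc)^2+(\delta_n^\stoc)^2\lesssim\delta_n^\stoc\|f\|_2+(\delta_n^\stoc)^2$, and your AM--GM absorption goes through.

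A smaller bookkeeping point concerns $\bar D\ge 1/(2\const)$. Derive it from Hoeffding at level $n(\delta_n^\stoc)^2$ using $\tilde C_0\delta_n^\stoc\le 1$, not from the $e^{-t}$ event with $t\le n/\tilde C$. The proposition allows every $t\in[1,n]$, and \eqref{eq:fact-1-h} should hold on an event that does not depend on $t$.
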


Let $\mathcal{C}_1$ be the event in \cref{prop:plm-fact-mu}. The rest of the proof will proceed conditioned on the event $\mathcal{C}_0\cap \mathcal{C}_1$. On \(\mathcal C_1\), \eqref{eq:fact-1-h} implies \(\bar D\neq0\), since \(\mathsf{H}_\mu(0)=\infty\). Given that $\tilde{a}$ is uniformly bounded by its construction and \cref{cond:reg-plm}, $\lambda \|\bar{a} - \tilde{a}\|_n^2 + \mathsf{H}_\mu(\bar{a}) < \infty$, thus $\bar{a}$ has finite objective value. Since $\mathsf{H}_\mu(a)$ is lower semi-continuous and $\lambda \|a - \tilde{a}\|_n^2$ is coercive, the objective in \eqref{eq:est1-case1-step2} has a finite minimum. Because $\hat{a}$ in \eqref{eq:est1-case1-step2} minimizes the objective therein, this means
\begin{align}
    \lambda \|\tilde{a} - \hat{a}\|_n^2 + \mathsf{H}_\mu(\hat{a}) &\le \lambda \|\tilde{a} - \bar{a}\|_n^2 + \mathsf{H}_\mu(\bar{a}) \nonumber \\
    &\overset{(a)}{\lesssim} \lambda \left[(\delta_\pi^\appr)^2 + \frac{t}{n}\right] + (\delta_n^\stoc)^2  \label{eq:plm-mu-minimizer}.
\end{align} Here $(a)$ follows from the error bound of $\mathsf{H}_\mu(\bar{a})$ \eqref{eq:fact-1-h}, $\|\tilde{a} - \bar{a}\|_n$ \eqref{eq:fact-1-a} in \cref{prop:plm-fact-mu}, and the condition $\lambda \le 1$ in the statement. Since $\mathsf{H}_\mu(\hat{a}) \ge \Phi_\mu(0, 0, \hat{a}) = 0$, $\|\hat{a} - \tilde{a}\|_n^2 \lesssim (\delta_\pi^\appr)^2 + \frac{t}{n} + \frac{(\delta_n^\stoc)^2}{\lambda}$. Recall the definition of $w_i$ in \eqref{eq:psi-plm}, with a slight abuse of notation $\|\hat{a} - w\|_n = \sqrt{\frac{1}{n} \sum_{i=1}^n (\hat{a}_i - w_i)^2}$, combining the triangle inequality with the error bound of $\|\tilde{a} - \bar{a}\|_n$ \eqref{eq:fact-1-a} and $\|\bar{a} - w\|_\infty$ \eqref{eq:fact-1-w}, we find
\begin{align} \label{eq:plm-mu-hata-error}
    \|\hat{a} - w\|_n &\le \|\hat{a} - \tilde{a}\|_n + \|\tilde{a} - \bar{a}\|_n + \|\bar{a} - w\|_n  \lesssim \delta_\pi^\appr + \sqrt{\frac{t}{n}} + \frac{\delta_n^\stoc}{\sqrt{\lambda}}.
\end{align}

On the other hand, combining the non-negativity of $\lambda \cdot \|\tilde{a} - \hat{a}\|_n^2$ and \eqref{eq:plm-mu-minimizer} also gives
\begin{align}
\label{eq:plm-mu-h}
    \mathsf{H}_\mu(\hat{a}) \lesssim \lambda \left[(\delta_\pi^\appr)^2 + \frac{t}{n}\right] + (\delta_n^\stoc)^2  .
\end{align} Recall the definition of $\mathsf{H}_\mu(\hat{a})$ and $\Phi_\mu(f, \beta, a)$, this implies the following instance-dependent error bound by letting $\beta=0$ in the supremum:
\begin{align}
\label{eq:plm-ins}
    \forall g, \tilde{g} \in \mathcal{G}, \qquad \left|\frac{1}{n} \sum_{i=1}^n (g - \tilde{g})(X_i) \cdot \hat{a}_i \right| \le \|g - \tilde{g}\|_n^2 + \mathsf{H}_\mu(\hat{a}).
\end{align} On the other hand, let $f=0$ in the $\sup$ operator, we obtain $|\frac{1}{n} \sum_{i=1}   T_i \hat{a}_i \cdot \beta - \beta| \le \mathsf{H}_\mu(\hat{a})$ for any $\beta \in \mathbb{R}$, the finiteness of $\mathsf{H}_\mu(\hat{a})$ implies the normalization constraint $\frac{1}{n} \sum_{i=1} \hat{a}_i \cdot T_i = 1$. Recall our choice of fixed $\bar{g} \in \mathcal{G}$ satisfying $\|\bar{g} - \mu_0\|_2 \le 2\delta_\mu^\appr$, it follows from the definition of our estimator in \eqref{eq:est1-case1-step3} the model \eqref{eq:intro:model}, and the normalization constraint $\frac{1}{n} \sum_{i=1} \hat{a}_i \cdot T_i = 1$ that
\begin{align*}
    \hat{\theta}_\mu - \theta_0 
    &\overset{(a)}{=} \underbrace{\frac{1}{n} \sum_{i=1}^n \left(\varepsilon_i + \mu_0(X_i) - \bar{g}(X_i) \right) \cdot \hat{a}_i}_{\mathsf{T}_1} + \underbrace{\frac{1}{n} \sum_{i=1}^n (\bar{g}(X_i) - \hat{\mu}(X_i)) \cdot \hat{a}_i}_{\mathsf{T}_2}.
\end{align*} 

We will handle $\mathsf{T}_1$ and $\mathsf{T}_2$ separately. For $\mathsf{T}_2$, we plug in the instance-dependent error bound \eqref{eq:plm-ins} with $g \gets \bar{g}$ and $\tilde{g} \gets \hat{\mu}$ and obtain
\begin{align*}
    |\mathsf{T}_2| = \left|\frac{1}{n} \sum_{i=1}^n (\bar{g}(X_i) - \hat{\mu}(X_i)) \cdot \hat{a}_i\right| &\lesssim \|\bar{g} - \hat{\mu}\|_n^2 + \left[(\delta_n^\stoc)^2 + \lambda \cdot \left((\delta_\pi^\appr)^2 + \frac{t}{n} \right)\right] \\
    &\lesssim (\delta_n^\stoc)^2 + (\delta_\mu^\appr)^2 + \frac{t}{n} + \lambda \cdot (\delta_\pi^\appr)^2
\end{align*} by the error bound of $\|\hat{\mu} - \bar{g}\|_n$ in \eqref{eq:fact-1-g}, the error bound of $\mathsf{H}_\mu(\hat{a})$ in \eqref{eq:plm-mu-h}, and the fact that $\lambda \le 1$. 

On the other hand, we will further decompose $\mathsf{T}_1$ into
\begin{align*}
    \mathsf{T}_1 &= \frac{1}{n} \sum_{i=1}^n \varepsilon_i w_i + \frac{1}{n} \sum_{i=1}^n \varepsilon_i (\hat{a}_i - w_i) + \frac{1}{n} \sum_{i=1}^n (\mu_0(X_i) - \bar{g}(X_i)) \cdot (\hat{a}_i - w_i) \\
    &\qquad \qquad + \frac{1}{n} \sum_{i=1}^n (\mu_0(X_i) - \bar{g}(X_i)) \cdot w_i.
\end{align*} 

We will use the following concentration result: it follows from the fact that $\{\varepsilon_i\}_{i=1}^n$ are zero-mean random variables conditioned on fixed $\mathcal{D}_2$ and $\{(X_i, T_i)\}_{i=1}^n$, under which $\{\hat{a}_i - w_i\}_{i=1}^n$ are fixed because our construction of the weights only depends on $\mathcal{D}_2$ and $\{(X_i, T_i)\}_{i=1}^n$ in the first split $\mathcal{D}_1$.
\begin{lemma}
\label{lemma:concentration1}
    Under the setting of \cref{thm:main-plm}, assume further that $\|\hat{a} - w\|_n \le \delta$. There exists some constant $\tilde{C}_1 = \poly(\const)$ such that the following event occurs with probability at least $1-e^{-t}$ for any $t\ge 1$:
    \begin{align*}
        \left|\frac{1}{n} \sum_{i=1}^n \varepsilon_i \left(\hat{a}_i - w_i\right) \right| &\le \tilde{C}_1  \delta \cdot \sqrt{\frac{t}{n}}
    \end{align*}
\end{lemma}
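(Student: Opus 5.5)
We condition on $\mathcal{D}_2$ and on $\{(X_i,T_i)\}_{i=1}^n$, and denote this $\sigma$-field by $\mathcal{F}$. Under $\mathcal{F}$, the vector $v:=\hat{a}-w$ is deterministic. Indeed, $\hat{a}$ is computed from \eqref{eq:est1-case1-step2}, which uses only $\tilde a$ (a function of $\hat\pi$ and $\{(X_i,T_i)\}_{i=1}^n$) and $\{(X_i,T_i)\}_{i=1}^n$. The oracle weights $w_i=u_i/\mathbb{E}[u^2]$ are functions of $(X_i,T_i)$. The noises $\varepsilon_1,\ldots,\varepsilon_n$, however, are not determined by $\mathcal{F}$. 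The data are i.i.d., $\mathcal{D}_2$ is independent of $\mathcal{D}_1$, and $\mathbb{E}[\varepsilon\mid X,T]=0$. Hence, conditionally on $\mathcal{F}$, the $\varepsilon_i$ are independent, have mean zero, and satisfy $|\varepsilon_i|\le\const$ by \cref{cond:reg-plm}(a).

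Next we apply Hoeffding's inequality conditionally. The sum $\frac1n\sum_{i=1}^n\varepsilon_i v_i$ has independent, centered summands with $|\varepsilon_i v_i|\le \const|v_i|$. Therefore
\begin{align*}
\mathbb{P}\Big(\Big|\tfrac1n\textstyle\sum_{i=1}^n \varepsilon_i v_i\Big|\ge s \,\Big|\, \mathcal{F}\Big)\le 2\exp\Big(-\frac{n^2 s^2}{2\const^2\sum_i v_i^2}\Big)=2\exp\Big(-\frac{n s^2}{2\const^2\|v\|_n^2}\Big).
\end{align*}
On the $\mathcal{F}$-measurable event $\{\|v\|_n\le\delta\}$ we take $s=\const\delta\sqrt{2(t+\log 2)/n}$. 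The right-hand side is then at most $e^{-t}$. Since $t\ge1$, we have $t+\log2\le 2t$, which gives the bound $\tilde C_1\delta\sqrt{t/n}$ with $\tilde C_1=2\const$. Taking expectations over $\mathcal{F}$ turns this conditional bound into the unconditional probability $1-e^{-t}$. The assumption $\|\hat a-w\|_n\le\delta$ is read as a condition imposed on an $\mathcal{F}$-measurable quantity. So the statement holds on the intersection of that event with the concentration event, which is exactly how it will be combined with \eqref{eq:plm-mu-hata-error} later.

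The main point requiring care is the measurability claim. We must check that the minimizer $\hat a$ (including the fallback convention $\hat a=0$, and any fixed tie-breaking rule in the $\argmin$) is a measurable function of $(\mathcal{D}_2,\{(X_i,T_i)\}_{i=1}^n)$ that never uses $\{Y_i\}_{i=1}^n$. We also need the conditional independence of $\varepsilon_i$ given all of $\{(X_j,T_j)\}_{j\le n}$ and $\mathcal{D}_2$, which follows from the i.i.d. sampling. Beyond this, the argument is a routine bounded-difference bound.
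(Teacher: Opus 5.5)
Your proposal is correct and takes the same route the paper indicates. You condition on $\mathcal{D}_2$ and $\{(X_i,T_i)\}_{i=1}^n$, so that $\hat a - w$ is fixed and the $\varepsilon_i$ are conditionally independent, mean-zero and bounded, and then apply Hoeffding's inequality. Reading $\{\|\hat a - w\|_n \le \delta\}$ as a conditioning-measurable event that is intersected with the concentration event is exactly what the later union-bound step needs.
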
 

Let $\mathcal{C}_2$ be the event in \cref{lemma:concentration1}. We proceed with the rest of the proof conditioned on the event $\mathcal{C}_0 \cap \mathcal{C}_1 \cap \mathcal{C}_2$. It then follows from the triangle inequality, Cauchy-Schwarz inequality, the error bound of $\|\hat{a} - w\|_{n}$ in \eqref{eq:plm-mu-hata-error} and the error bounds in \eqref{eq:fact-1-mu} and \cref{lemma:concentration1} that
\begin{align*}
    \left|\mathsf{T}_1 - \frac{1}{n} \sum_{i=1}^n \varepsilon_i w_i \right| &\lesssim  \|\bar{g} - \mu_0\|_n \cdot \|\hat{a} - w\|_n +  \sqrt{\frac{t}{n}} \cdot \left[\delta_\pi^\appr + \frac{\delta_n^\stoc}{\sqrt{\lambda}} + \delta_\mu^\appr + \sqrt{\frac{t}{n}} \right] \\
    &\lesssim \left[\sqrt{\frac{t}{n}} + \delta_\mu^\appr \right] \cdot \left[\delta_\pi^\appr + \frac{\delta_n^\stoc}{\sqrt{\lambda}} + \sqrt{\frac{t}{n}} \right] 
\end{align*} 

Putting all the pieces together, under the intersection of events in \cref{prop:plm-fact-mu}, \cref{prop:plm-fact}, and \cref{lemma:concentration1} that occurs with probability at least $1-2(e^{-n(\delta_n^\stoc)^2} + e^{-t})$ by the union bound, we have
\begin{align*}
    \left|\hat{\theta}_\mu - \theta_0 - \hat{\varphi}_{\mathsf{PLM}}\right| &\le |\mathsf{T}_2| + \left|\mathsf{T}_1 - \frac{1}{n} \sum_{i=1}^n \varepsilon_i w_i \right| \\
    %&\lesssim (\delta_n^\stoc)^2 + (\delta_\mu^\appr)^2 + \lambda (\delta_\pi^\appr)^2 + \left(\sqrt{\frac{t}{n}} + \delta_\mu^\appr \right) \cdot \left(\delta_\pi^\appr + \frac{\delta_n^\stoc}{\sqrt{\lambda}} + \sqrt{\frac{t}{n}} \right) \\
    &\lesssim \delta_\mu^\appr \cdot \delta_\pi^\appr + (\delta_n^\stoc)^2 + \lambda (\delta_\pi^\appr)^2 + \frac{\delta_\mu^\appr \cdot \delta_n^\stoc}{\sqrt{\lambda}} + \sqrt{\frac{t}{n}}\left(\delta_\pi^\appr + \frac{\delta_n^\stoc}{\sqrt{\lambda}} + \sqrt{\frac{t}{n}}\right)
\end{align*} by noting $\delta_\mu^\appr \le \delta_\pi^\appr$ and $\lambda \le 1$. This concludes the proof for {\sc Case 1}. \qed

\subsection{Proof of {\sc Case 2}}

Denote $m_0(x) = \mathbb{E}[Y|X=x] = \beta_0 \cdot \pi_0(x) + \mu_0(x)$ and $\xi := Y - m_0(X) = \beta_0 u + \varepsilon$. Let $\tilde{m}(x) = \hat{\beta} \cdot \hat{\pi}(x) + \hat{\mu}(x)$.  With a slight abuse of notation, we will also denote $\|m - f\|_n = \{\frac{1}{n} \sum_{i=1}^n |m_i - f(X_i)|^2\}^{1/2}$ for $m\in \{\bar{m}, \hat{m}\}$ and $f\in L_2(\nu_x)$ and use $\tilde{m} \in \mathbb{R}^n$ or $\tilde{m} \in L_2(\nu_x)$ exchangeably based on the context. 

We also pick $\bar{g} \in \mathcal{G}$ such that $\|\bar{g} - \pi_0\|_2 \le 2 \delta_\pi^\appr$. The proof is based on the following instance-dependent error bounds \eqref{eq:fact-2-gxi} -- \eqref{eq:fact-2-g2} and concentration results \eqref{eq:fact-2-m} -- \eqref{eq:fact-2-u2}. The concentration results are all standard and follow directly from the Bernstein-type inequality. 

\begin{proposition} \label{prop:plm-fact-pi}
    Denote $\mathbb{P}_{n, l}[f]= \frac{1}{n} \sum_{i=1+(l-1)\cdot n}^{l\cdot n} f(X_i, T_i, Y_i)$ be the empirical average operator on the split $\mathcal{D}_l$ and $\mathbb{P}[f] = \mathbb{E}[f(X, T, Y)]$, and define $\delta_\pi := \delta_\pi^\appr + \delta_n^\stoc + (t/n)^{1/2}$ only for the proof of \cref{thm:main-plm}. 
    The following event occurs with probability at least $1-e^{-n(\delta_n^\stoc)^2} - e^{-t}$ for some $\tilde{C}_1 = \poly(\const)$:
    \begin{align}
        \forall f \in \partial \mathcal{G}, l\in\{1, 2\} \qquad \qquad \left|\mathbb{P}_{n, l} [f(X) \cdot \xi] \right| &\le \tilde{C}_1 \left[\|f\|_2 \cdot \delta_n^\stoc + (\delta_n^\stoc)^2 \right]\label{eq:fact-2-gxi}\\
         \left| \left(\mathbb{P}_{n, l} - \mathbb{P} \right)[m_0(X) f(X)] \right| &\le \tilde{C}_1 \left[\|f\|_2 \cdot \delta_n^\stoc + (\delta_n^\stoc)^2 \right]\label{eq:fact-2-gm0}\\
        \left|\|f\|_2 - \|f\|_{n}\right| &\le \tilde{C}_1 \delta_n^\stoc \label{eq:fact-2-g2} \\
        \|\tilde{m} - m_0\|_n &\le \tilde{C}_1 \left[\delta_\pi + \delta_\mu^\appr \right] \label{eq:fact-2-m} \\
        \|\bar{g} - \pi_0\|_n &\le \tilde{C}_1 \left[ \delta_\pi^\appr + (t/n)^{1/2} \right] \label{eq:fact-2-gbar} \\
        \left|\mathbb{P}_{n,1} \left[(\hat{\pi} - \pi_0)(X) \cdot \varepsilon\right] \right| + \left|\mathbb{P}_{n,1} \left[(\hat{\pi} - \pi_0)(X) \cdot u\right] \right| &\le \tilde{C}_1 \delta_\pi \cdot (t/n)^{1/2} \label{eq:fact-2-pi} \\
        \|\pi_0 - \hat{\pi}\|_n^2 &\le \tilde{C}_1 \delta_\pi^2 \label{eq:fact-2-pi2} \\
        \left|\mathbb{P}_{n, 1} \left[(T - \hat{\pi}(X))^2\right] - \mathbb{E}[u^2] \right| &\le \tilde{C}_1 \left[\delta_\pi^2 + (t/n)^{1/2} \right]\label{eq:fact-2-u2}
    \end{align}
\end{proposition}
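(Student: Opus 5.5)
The plan is to prove \cref{prop:plm-fact-pi} by splitting the eight displays into two groups. Displays \eqref{eq:fact-2-gxi}--\eqref{eq:fact-2-g2} are bounds that must hold uniformly over $f\in\partial\mathcal G$, and they come from localized empirical-process arguments. Displays \eqref{eq:fact-2-m}--\eqref{eq:fact-2-u2} involve only a fixed number of functions that are measurable with respect to $\mathcal D_2$, so they follow from Bernstein/Hoeffding applied conditionally on $\mathcal D_2$, combined with the event $\mathcal C_0$ of \cref{prop:plm-fact}. (As in Case 1, I read the proposition as holding on $\mathcal C_0$; otherwise I would add that event's probability.) Each group is assigned failure probability $e^{-n(\delta_n^\stoc)^2}$ and $e^{-t}$ respectively, after a union bound over finitely many pieces. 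Constant factors in front of these exponentials are absorbed into $\tilde C_1$, using $n(\delta_n^\stoc)^2\ge\log n$ and $t\ge1$.

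\textbf{Uniform part.} The three classes are $\{f\xi\}$, $\{m_0 f\}$ and $\{f^2\}$ with $f\in\partial\mathcal G$. Each is uniformly bounded by $\poly(\const)$, because $|\xi|\le|\beta_0||u|+|\varepsilon|\le 2\const^2$, $\|m_0\|_\infty\le 2\const^2$ and $\|f\|_\infty\le2\const$. The class $\{f\xi\}$ is already centered, since $\mathbb E[\xi|X]=0$; for the other two I center them. The key steps, in order, are as follows.
\begin{itemize}
\item[(i)] Bound the localized Rademacher complexity of each multiplied class by $\poly(\const)\,\mathsf R_{n,\nu_x}(\delta;\partial\mathcal G)$. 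I would do this by the Ledoux--Talagrand contraction inequality applied conditionally on the multipliers ($\xi_i$, $m_0(X_i)$, or the Lipschitz map $z\mapsto z^2$ on $[-2\const,2\const]$).
\item[(ii)] On each shell $\{f:\|f\|_2\in(2^{k-1}\delta_n^\stoc,2^k\delta_n^\stoc]\}$, plus the ball $\|f\|_2\le\delta_n^\stoc$, apply Talagrand's (Bousquet's) inequality with deviation level $x_k=n(\delta_n^\stoc)^2+\log(k+1)^2$. Then use \cref{def:hcs}: for every $\delta\ge\delta_n^\stoc$ it gives $\mathsf R_{n,\nu_x}(\delta;\partial\mathcal G)\le \const\,\delta\,\delta_n^\stoc$. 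The resulting shell-wise bound is $\lesssim 2^k\delta_n^\stoc\cdot\delta_n^\stoc+(\delta_n^\stoc)^2\lesssim \|f\|_2\delta_n^\stoc+(\delta_n^\stoc)^2$.
\item[(iii)] Sum the failure probabilities over $k$ and over $l\in\{1,2\}$. The shells are finite in number because $\|f\|_2\le2\const$.
\end{itemize}
This yields \eqref{eq:fact-2-gxi} and \eqref{eq:fact-2-gm0} directly. Applied to $f^2$, it also yields $|\|f\|_n^2-\|f\|_2^2|\lesssim\|f\|_2\delta_n^\stoc+(\delta_n^\stoc)^2$. Dividing by $\|f\|_n+\|f\|_2$ gives \eqref{eq:fact-2-g2} when $\|f\|_2\ge\delta_n^\stoc$. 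When $\|f\|_2<\delta_n^\stoc$, the same inequality gives $\|f\|_n\lesssim\delta_n^\stoc$, which is enough.

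\textbf{Fixed-function part.} Conditional on $\mathcal D_2$, the functions $\tilde m-m_0$, $\hat\pi-\pi_0$ and $\bar g-\pi_0$ are fixed and bounded. For a bounded $h$, Bernstein's inequality applied to $h^2$ gives $\|h\|_n^2\le2\|h\|_2^2+Ct/n$.
\begin{itemize}
\item For \eqref{eq:fact-2-m}, decompose $\tilde m-m_0=(\hat\beta-\beta_0)\hat\pi+\beta_0(\hat\pi-\pi_0)+(\hat\mu-\mu_0)$. On $\mathcal C_0$ its $L_2$ norm is $\lesssim\delta_\mu^\appr+\delta_\pi^\appr+\delta_n^\stoc$, and the Bernstein step then gives the claim.
\item Displays \eqref{eq:fact-2-gbar} and \eqref{eq:fact-2-pi2} follow in the same way.
\item For \eqref{eq:fact-2-pi}, condition additionally on $\{X_i\}_{i=1}^n$. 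Then $\varepsilon_i$ and $u_i$ are independent, bounded and mean zero; for $\varepsilon_i$ this uses $\mathbb E[\varepsilon|X,T]=0$ after further conditioning on $T$. Hoeffding's inequality gives a bound $\lesssim\|\hat\pi-\pi_0\|_n\sqrt{t/n}$, and \eqref{eq:fact-2-pi2} converts this to $\delta_\pi\sqrt{t/n}$.
\item For \eqref{eq:fact-2-u2}, expand $(T-\hat\pi)^2=u^2+2u(\pi_0-\hat\pi)+(\pi_0-\hat\pi)^2$. Hoeffding on $u^2$ gives the $\sqrt{t/n}$ term, \eqref{eq:fact-2-pi} controls the cross term, and \eqref{eq:fact-2-pi2} controls the square.
\end{itemize}

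\textbf{Main obstacle.} I expect the main difficulty to be the uniform part. The class $\partial\mathcal G$ is not assumed star-shaped, so the usual one-radius argument is unavailable. This is exactly why \cref{def:hcs} requires the complexity bound for all $\delta\ge\delta_s$, and why I use dyadic peeling. The care needed is in checking that the $\log k$ terms from the union over shells are dominated by $n(\delta_n^\stoc)^2\ge\log n$, and that the contraction step is valid for the data-dependent but bounded multipliers $\xi_i$. If contraction with random multipliers causes trouble, the fallback is to symmetrize the pairs $(X_i,\xi_i)$ jointly and apply contraction conditionally on $\{\xi_i\}$.
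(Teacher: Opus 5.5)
Your proposal is correct and takes essentially the route the paper indicates. The paper does not write out this proof: it describes displays \eqref{eq:fact-2-gxi}--\eqref{eq:fact-2-g2} as instance-dependent empirical-process bounds (which your peeling and contraction argument supplies via the all-radii condition in \cref{def:hcs}), and says the remaining displays are standard Bernstein-type concentration for $\mathcal{D}_2$-measurable functions on $\mathcal{C}_0$. Your reading that the proposition is applied on $\mathcal{C}_0$ is consistent with the paper's later union bound, which counts $\mathcal{C}_0$ separately.
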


Let $C_1=\poly(\const)$ be the constant $\tilde{C}_1$ in \cref{prop:plm-fact-pi}, and the event in \cref{prop:plm-fact-pi} be $\mathcal{C}_1$. The rest of the proof proceeds conditioned on the event $\mathcal{C}_0 \cap \mathcal{C}_1$. 
Denote $\mathsf{H}_\pi(m) = \sup_{f \in \partial \mathcal{G}} \Phi_\pi(f, m)$ and $\bar{m} = (m_0(X_1), \ldots, m_0(X_n)) \in \mathbb{R}^n$. We first claim that under the inequalities \eqref{eq:fact-2-gxi} -- \eqref{eq:fact-2-g2}, 
\begin{align}
    \label{eq:plm-h-pi}
    \mathsf{H}_\pi(\bar{m}) \lesssim (\delta_n^\stoc)^2,
\end{align} whose proof can be found at the end of this section. 
Given $\hat{m}$ minimizes the objective in \eqref{eq:est1-case2-step2} and $\mathsf{H}_\pi(\hat{m}) \ge \Phi_\pi(0, \hat{m}) = 0$, we obtain $\lambda \cdot \|\hat{m} - \tilde{m}\|_n^2 \le \mathsf{H}_\pi(\bar{m}) + \lambda \cdot \|\bar{m} - \tilde{m}\|_n^2$. Combining it with the triangle inequality further yields
\begin{align}
    \|\hat{m} - m_0\|_n &\le \|\hat{m} - \tilde{m}\|_n + \|\tilde{m} - m_0\|_n \nonumber \\
    &\le 2\|\tilde{m} - m_0\|_n + \sqrt{\frac{\mathsf{H}_\pi(\bar{m})}{\lambda}} \lesssim  \delta_\mu^\appr + \frac{\delta_n^\stoc}{\sqrt{\lambda}} + \sqrt{\frac{t}{n}}, \label{eq:plm-m-error}
\end{align} where the last inequality follows from the error bound \eqref{eq:fact-2-m} of $\|m_0 - \tilde{m}\|_n$ and the facts that $\lambda \le 1$ and $\delta_\pi^\appr \le \delta_\mu^\appr$. We further argue the following instance-dependent error bound by the inequalities \eqref{eq:fact-2-gxi} -- \eqref{eq:fact-2-g2}, the proof is also deferred to the end of the section. 
\begin{align}
\label{eq:plm-pi-ins}
    \forall f\in \partial \mathcal{G}, \qquad \left|\frac{1}{n} \sum_{i=1}^n (m_0(X_i) - \hat{m}_i)\cdot f(X_i) \right| \lesssim \|f\|_2^2 + (\delta_n^\stoc)^2 + \lambda (\delta_\mu^\appr)^2 + \frac{t}{n}
\end{align}

We assume without loss of generality that $2\const\cdot C_1 \left\{(\delta_\pi^\appr + \delta_n^\stoc)^2 + \sqrt{\frac{t}{n}}\right\} \le 1$, otherwise the error rate is trivial because $\hat{\theta}_\pi$ is bounded. This is because the denominator in $\hat{\theta}_\pi$ is bounded from below. At the same time, as $\|\hat{m}\|_n \le \|\hat{m} - m_0\|_n + \|m_0\|_n \lesssim 1$ provided $\lambda \ge (\delta_n^\stoc)^2$, the numerator is also bounded by applying the triangle inequality and Cauchy-Schwarz inequality. 

Denote $\hat{D} = \frac{1}{n} \sum_{i=1}^n \left[T_i - \hat{\pi}(X_i) \right]^2$, note $|\hat{D} - \mathbb{E}[u^2]| \le C_1 \left\{(\delta_\pi^\appr + \delta_n^\stoc)^2 + (t/n)^{1/2}\right\}$ by \eqref{eq:fact-2-u2}. As long as $2\const\cdot C_1 \left\{(\delta_\pi^\appr + \delta_n^\stoc)^2 + (t/n)^{1/2}\right\} \le 1$, $\hat{D} \ge 1/(2\const)$ hence the truncation in \eqref{eq:est1-case2-step3} is inactive and its denominator equals $\hat{D}$. By our constructed estimator \eqref{eq:est1-case2-step3} and the model \eqref{eq:intro:model}, we decompose
\begin{align*}
    &\frac{1}{n} \sum_{i=1}^n [T_i - \hat{\pi}(X_i)]^2\left[\hat{\theta}_\pi - \theta_0 \right]
    %&~~= \frac{1}{n} \sum_{i=1}^n \left\{Y_i - (T_i - \hat{\pi}(X_i)) \beta_0 - \hat{m}_i\right\} (T_i - \hat{\pi}(X_i)) \\
    %&~~= \frac{1}{n} \sum_{i=1}^n \left\{\varepsilon_i + m_0(X_i) + \beta_0 u_i - \beta_0 u_i - (\pi_0(X_i) - \hat{\pi}(X_i)) \beta_0 - \hat{m}_i\right\} (T_i - \hat{\pi}(X_i)) \\
    = \frac{1}{n} \sum_{i=1}^n \left\{\varepsilon_i + m_0(X_i) - \hat{m}_i - (\pi_0(X_i) - \hat{\pi}(X_i)) \beta_0 \right\} \left\{u_i + \pi_0(X_i) - \hat{\pi}(X_i)\right\}
\end{align*}
Recall our choice of fixed $\bar{g} \in \mathcal{G}$ such that $\|\bar{g} - \pi_0\|_2 \le 2\delta_\pi^\appr$, it further follows from the triangle inequality that
\begin{align*}
    \left|\hat{\theta}_\pi - \theta_0 - \hat{\varphi}_{\mathsf{PLM}} \right| &\le \left|\frac{1}{\mathbb{E}[u^2]} - \frac{1}{\hat{D}} \right| \cdot \left|\frac{1}{n} \sum_{i=1}^n \varepsilon_i u_i \right| + \frac{1}{|\hat{D}|} \cdot \left|\frac{1}{n} \sum_{i=1}^n (m_0(X_i) - \hat{m}_i) \cdot u_i \right| \\
    &\qquad \qquad + \frac{|\beta_0|}{|\hat{D}|} \frac{1}{n} \sum_{i=1}^n |\hat{\pi}(X_i) - \pi_0(X_i)|^2 + \frac{|\beta_0|}{|\hat{D}|} \cdot \left|\frac{1}{n} \sum_{i=1}^n (\pi_0(X_i) - \hat{\pi}(X_i)) \cdot u_i\right| \\
    &\qquad \qquad + \frac{1}{|\hat{D}|} \cdot \left| \frac{1}{n} \sum_{i=1}^n (\pi_0 - \hat{\pi})(X_i) \varepsilon_i\right| + \frac{1}{|\hat{D}|} \left|\frac{1}{n}\sum_{i=1}^n (m_0(X_i) - \hat{m}_i) (\pi_0 - \hat{\pi})(X_i) \right|
\end{align*} It then follows from the error bounds \eqref{eq:fact-2-pi}, \eqref{eq:fact-2-pi2}, \eqref{eq:fact-2-u2} and the fact that $|\hat{D}|^{-1} + |\beta_0| \lesssim 1$ by the above discussion and \cref{cond:reg-plm} that
\begin{align*}
    \left|\hat{\theta}_\pi - \theta_0 - \hat{\varphi}_{\mathsf{PLM}} \right| &{\lesssim} \left[(\delta_\pi^\appr + \delta_n^\stoc)^2 + \sqrt\frac{t}{n} \right] \cdot \left|\frac{1}{n} \sum_{i=1}^n \varepsilon_i u_i\right| + \left|\frac{1}{n} \sum_{i=1}^n (m_0(X_i) - \hat{m}_i) \cdot u_i \right| \\
    &\qquad \qquad + (\delta_\pi^\appr + \delta_n^\stoc)^2 + \frac{t}{n} + \left|\frac{1}{n} \sum_{i=1}^n \left(m_0(X_i) - \hat{m}_i \right)(\pi_0 - \bar{g})(X_i) \right| \\
    &\qquad \qquad + \left|\frac{1}{n} \sum_{i=1}^n (m_0(X_i) - \hat{m}_i) (\bar{g}(X_i) - \hat{\pi}(X_i)) \right|.
\end{align*}
We will further use the following concentration result, which is due to our construction of the cross-sample debiasing weights $\{\hat{m}_i\}_{i=1}^n$ that depend on $\mathcal{D}_2$ and only $\{X_i\}_{i=1}^n$ in the first sample $\mathcal{D}_1$. This will make $\{u_i \cdot (m_0(X_i) - \hat{m}_i) \}_{i=1}^n$ be independent zero-mean random variables given fixed $\mathcal{D}_2, \{X_i\}_{i=1}^n$. 
\begin{lemma}
\label{lemma:concentration2}
Under the setting of \cref{thm:main-plm}, assume further that $\|\hat{m} - m_0\|_n \le \delta$, then with probability at least $1-e^{-t}$,
\begin{align*}
    \left|\frac{1}{n} \sum_{i=1}^n \varepsilon_i \cdot u_i \right| \le \tilde{C}_1 \sqrt{\frac{t}{n}} \qquad \text{and} \qquad \left|\frac{1}{n} \sum_{i=1}^n (m_0(X_i) - \hat{m}_i) \cdot u_i\right| \le \tilde{C}_1 \cdot \delta  \sqrt{\frac{t}{n}}.
\end{align*}
\end{lemma}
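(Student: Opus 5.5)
The plan is to condition on $\mathcal{D}_2$ and on the covariates $\{X_i\}_{i=1}^n$ of the first split, and then use Bernstein's inequality with a union bound over the two claims, splitting $t$ as $t+\log 2$ and absorbing the extra constant into $\tilde C_1$. Under this conditioning, the pairs $\{(u_i,\varepsilon_i)\}_{i=1}^n$ remain independent across $i$, because the observations are i.i.d. and independent of $\mathcal{D}_2$.

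\emph{Measurability of the weights.} By construction \eqref{eq:est1-case2-step2}, the vector $\hat m$ is a deterministic function of $\mathcal{D}_2$ and $\{X_i\}_{i=1}^n$ only. The initial values $\tilde m_i=\hat\beta\hat\pi(X_i)+\hat\mu(X_i)$ use $\mathcal{D}_2$ and $X_i$. The map $\Phi_\pi$ uses $\{(X_i,Y_i)\}_{i>n}$ and $\{X_i\}_{i\le n}$. For measurability I would fix a measurable selection of the argmin, or the default $\hat m=0$. Hence the coefficients $c_i:=m_0(X_i)-\hat m_i$ are fixed under the conditioning. The assumption $\|\hat m-m_0\|_n\le\delta$ is itself measurable with respect to this $\sigma$-field, so it can be treated as a deterministic constraint $\frac1n\sum_i c_i^2\le\delta^2$.

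\emph{First bound.} The terms $\varepsilon_iu_i$ are i.i.d. and bounded by $\const^2$ under \cref{cond:reg-plm}(a). Their mean is zero, since $\mathbb{E}[\varepsilon_iu_i]=\mathbb{E}[u_i\,\mathbb{E}[\varepsilon_i\mid X_i,T_i]]=0$, and this remains true after conditioning on $X_i$ because $u_i$ is $(X_i,T_i)$-measurable. Hoeffding's inequality then gives $|\frac1n\sum\varepsilon_iu_i|\le \const^2\sqrt{2t/n}$ with probability at least $1-2e^{-t}$.

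\emph{Second bound.} Conditionally, $c_iu_i$ are independent with $\mathbb{E}[c_iu_i\mid X_i]=c_i\,\mathbb{E}[u_i\mid X_i]=0$ and $|c_iu_i|\le\const|c_i|$. Hoeffding's inequality for weighted sums gives, with conditional probability at least $1-2e^{-t}$,
\[
\Bigl|\frac1n\sum_{i=1}^n c_iu_i\Bigr|\le \const\sqrt{\frac{2t}{n}}\Bigl(\frac1n\sum_i c_i^2\Bigr)^{1/2}\le \const\,\delta\sqrt{\frac{2t}{n}}.
\]
Integrating over the conditioning variables yields the unconditional statement.

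\emph{Main obstacle.} The delicate point is that the bound is only claimed on the event $\|\hat m-m_0\|_n\le\delta$, and $\delta$ will later be taken from \eqref{eq:plm-m-error}, which is random through $\delta_\mu^\appr$, $\lambda$ and $t$ only deterministically. The right conditioning handles this. I would state the bound as
\[
\Bigl|\frac1n\sum_i c_iu_i\Bigr|\le\tilde C_1\|c\|_n\sqrt{t/n},
\]
which holds with probability $1-e^{-t}$ without any assumption on $\delta$, and then plug in $\|c\|_n\le\delta$ on the intersection of events.

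This step needs the conditional mean-zero property of $u_i$ given $X_i$, which is exactly what the cross-sample design preserves. If $\hat m$ depended on the treatments $T_i$ of the first split, this step would fail, so I would verify carefully that no $T_i$ with $i\le n$ enters \eqref{eq:est1-case2-step2}.
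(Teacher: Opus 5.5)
Your proposal is correct and follows the paper's own route. The paper justifies the lemma by noting that $\hat m$ depends on $\mathcal{D}_1$ only through $\{X_i\}_{i=1}^n$, so that conditionally on $(\mathcal{D}_2,\{X_i\}_{i=1}^n)$ the summands $u_i(m_0(X_i)-\hat m_i)$ are independent and mean zero, and then appeals to standard concentration. Your device of proving the bound with $\|m_0-\hat m\|_n$ in place of $\delta$ and intersecting events afterwards is a clean way to make the ``assume further'' clause rigorous.

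There are two cosmetic points. First, the weighted Hoeffding bound you actually use, rather than the Bernstein bound you announce, is the right tool, since the edited weights $\hat m_i$ have no entrywise bound of order one. Second, the union of two two-sided bounds costs $4e^{-t}$, so you should replace $t$ by $t+\log 4$ rather than $t+\log 2$; this is still absorbed into $\tilde C_1$ because $t\ge 1$.
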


Let $\mathcal{C}_2$ be the event in \cref{lemma:concentration2}. We proceed with the rest of the proof conditioned on the event $\mathcal{C}_0 \cap \mathcal{C}_1 \cap \mathcal{C}_2$, which occurs with probability at least $1-2(e^{-n(\delta_n^\stoc)^2} + e^{-t})$ by the union bound. Substituting the error bound in \cref{lemma:concentration2} with $\delta \gets \delta_\mu^\appr + \frac{\delta_n^\stoc}{\sqrt{\lambda}} + \sqrt{\frac{t}{n}}$ by \eqref{eq:plm-m-error}, we arrive at
\begin{align}
\label{eq:plm-theta-derror}
\begin{split}
    \left|\hat{\theta}_\pi - \theta_0 - \hat{\varphi}_{\mathsf{PLM}} \right| &\lesssim (\delta_\pi^\appr + \delta_n^\stoc)^2 + \frac{t}{n} + \sqrt{\frac{t}{n}} \cdot \left(\delta_\mu^\appr + \frac{\delta_n^\stoc}{\sqrt{\lambda}} +\delta_n^\stoc\right) + \mathsf{T}_{1} + \mathsf{T}_2.
\end{split} 
\end{align} where $\mathsf{T}_1 = \left|\frac{1}{n} \sum_{i=1}^n \left(m_0(X_i) - \hat{m}_i \right)(\pi_0 - \bar{g})(X_i) \right|$ and $\mathsf{T}_2 = \left|\frac{1}{n} \sum_{i=1}^n (m_0(X_i) - \hat{m}_i) (\bar{g}(X_i) - \hat{\pi}(X_i)) \right|$.
On the one hand, it follows from the Cauchy-Schwarz inequality and the error bound for $\|\hat{m} - m_0\|_n$ in \eqref{eq:plm-m-error} and the error bound for $\|\bar{g} - \pi_0\|_n$ in \eqref{eq:fact-2-gbar} that
\begin{align*}
    \mathsf{T}_1 \le \|\hat{m} - m_0\|_n \cdot \|\bar{g} - \pi_0\|_n \lesssim \left[\delta_\pi^\appr + \sqrt{\frac{t}{n}} \right] \cdot \left[\delta_\mu^\appr + \frac{\delta_n^\stoc}{\sqrt{\lambda}} + \sqrt{\frac{t}{n}}\right].
\end{align*} On the other hand, applying the instance-dependent error bound \eqref{eq:plm-pi-ins} with $f  = \bar{g} - \hat{\pi} \in \partial \mathcal{G}$ and the fact that $\|\hat{\pi} - \bar{g}\|_2 \lesssim \|\hat{\pi} - \pi_0\|_2 + \|\pi_0 - \bar{g}\|_2 \lesssim \delta^\stoc_n + \delta_\pi^\appr$ under the event in \cref{prop:plm-fact} yield
\begin{align*}
    \mathsf{T}_2 \lesssim \|\bar{g} - \hat{\pi}\|_2^2 + (\delta_n^\stoc)^2 + \lambda (\delta_\mu^\appr)^2 + t/n \lesssim (\delta^\appr_\pi)^2 + (\delta_n^\stoc)^2 + \lambda (\delta_\mu^\appr)^2 + t/n.
\end{align*} 

Now putting the two inequalities back into \eqref{eq:plm-theta-derror} and keeping in mind that we are considering the case where $\delta_\pi^\appr \le \delta_\mu^\appr$ and $\lambda \le 1$, we can conclude that, with the high-order residual term $\mathsf{Res} = (t/n)^{1/2} \cdot \{\delta_\mu^\appr + \delta_n^\stoc/\sqrt{\lambda} + (t/n)^{1/2}\}$, 
\begin{align*}
    \left|\hat{\theta}_\pi - \theta_0 - \hat{\varphi}_{\mathsf{PLM}} \right| %&\lesssim (\delta_\pi^\appr + \delta_n^\stoc)^2 + \frac{t}{n} + \sqrt{\frac{t}{n}} \cdot \left(\delta_\mu^\appr + \frac{\delta_n^\stoc}{\sqrt{\lambda}} +\delta_n^\stoc\right) \\
    %&\qquad \qquad + \left[\delta_\pi^\appr + \sqrt{\frac{t}{n}} \right] \cdot \left[\delta_\mu^\appr + \frac{\delta_n^\stoc}{\sqrt{\lambda}} + \sqrt{\frac{t}{n}}\right] + \lambda (\delta_\mu^\appr)^2 \\
    &\lesssim \delta_\mu^\appr \cdot \delta_\pi^\appr + (\delta_n^\stoc)^2 + \lambda (\delta_\mu^\appr)^2 + \frac{\delta_\pi^\appr \cdot \delta_n^\stoc}{\sqrt{\lambda}} + \mathsf{Res}.
\end{align*} 
This concludes the proof for {\sc Case 2}. \qed

\begin{proof}[Proof of \eqref{eq:plm-h-pi}]
To this end, it follows from the triangle inequality that, for any $f = g - \tilde{g} \in \partial \mathcal{G}$, 
\begin{align*}
    \Phi_\pi(f, \bar{m}) &= \left|\frac{1}{n} \sum_{i=n+1}^{2n} Y_i f(X_i) - \frac{1}{n} \sum_{i=1}^n m_0(X_i) f(X_i)\right| - \|f\|_n^2 \\
    &\le \left|\frac{1}{n} \sum_{i=n+1}^{2n} \xi_i f(X_i)\right| + \left|\frac{1}{n} \sum_{i=n+1}^{2n} m_0(X_i) f(X_i) - \mathbb{E}[m_0(X) f(X)]\right| \\
    &\qquad \qquad + \left|\mathbb{E}[m_0(X) f(X)] - \frac{1}{n} \sum_{i=1}^n m_0(X_i) f(X_i)\right| - \|f\|_n^2\\
    &\overset{(a)}{\le} 3 C_1 \left\{\|f\|_2 \cdot \delta_n^\stoc + (\delta_n^\stoc)^2 \right\} - \|f\|_n^2 \\
    &\overset{(b)}{\le} 3 C_1 \left\{\left(\|f\|_n + C_1 \delta_n^\stoc \right) \cdot \delta_n^\stoc + (\delta_n^\stoc)^2 \right\} - \|f\|_n^2 \\
    &\overset{(c)}{\le} \|f\|_n^2 + \frac{(3C_1)^2}{4} (\delta_n^\stoc)^2 + 3C_1(C_1 + 1) \cdot (\delta_n^\stoc)^2 - \|f\|_n^2 \lesssim (\delta_n^\stoc)^2
\end{align*} where we apply \eqref{eq:fact-2-gxi} and \eqref{eq:fact-2-gm0} in (a), \eqref{eq:fact-2-g2} in (b), and use a variant of Young's inequality $xy \le x^2 + \frac{y^2}{4}$ with $x= \|f\|_n$ and $y = (3C_1) \delta_n^\stoc$ in (c). Taking the supremum on both sides concludes the proof.
\end{proof}
\begin{proof}[Proof of the inequality \eqref{eq:plm-pi-ins}] By the optimality of $\hat{m}$, and after dropping the non-negative term $\lambda \|\hat{m} - \tilde{m}\|_n^2$, we find
\begin{align} \label{eq:plm-pi-h2}
    \mathsf{H}_\pi(\hat{m}) \le \mathsf{H}_\pi(\bar{m}) + \lambda \cdot \|m_0 - \tilde{m}\|_n^2 \lesssim (\delta_n^\stoc)^2 + \lambda \left[(\delta_\mu^\appr)^2 + \frac{t}{n} \right]
\end{align} where again we plug in the upper bound \eqref{eq:fact-2-m} for $\|\tilde{m} - m_0\|_n$ and use the facts $\delta_\pi^\appr \le \delta_\mu^\appr$, $\lambda \le 1$ to simplify the presentation. It follows from the triangle inequality that, for any $f = g - \tilde{g} \in \partial \mathcal{G}$, 
\begin{align*}
    \left|\frac{1}{n} \sum_{i=1}^n (m_0(X_i) - \hat{m}_i) f(X_i)\right| &\le \left|\frac{1}{n} \sum_{i=n+1}^{2n} Y_i f(X_i) - \frac{1}{n} \sum_{i=1}^n \hat{m}_i f(X_i)\right| \\
    &\qquad \qquad + \left|\frac{1}{n} \sum_{i=n+1}^{2n} Y_i f(X_i) - \frac{1}{n} \sum_{i=1}^n m_0(X_i) f(X_i)\right| \\
    &\overset{(a)}{=} \Phi_\pi(f, \hat{m}) + \|f\|_n^2 \\
    &\qquad \qquad + \left|\frac{1}{n} \sum_{i=n+1}^{2n} (m_0(X_i) + \xi_i) \cdot f(X_i) - \frac{1}{n} \sum_{i=1}^n m_0(X_i) f(X_i)\right| \\
    &\overset{(b)}{\le} \mathsf{H}_\pi(\hat{m}) + \|f\|_n^2 + \left|\frac{1}{n} \sum_{i=n+1}^{2n} \xi_i f(X_i) \right| \\
    &\qquad \qquad + \sum_{l\in \{1, 2\}} \left|\frac{1}{n} \sum_{i=1+(l-1)n}^{ln} m_0(X_i) \cdot f(X_i) - \mathbb{E}[m_0(X) f(X)] \right| 
\end{align*} Here (a) follows from the definition of $\Phi_\pi$, (b) follows from the definition $\mathsf{H}_\pi(m) = \sup_{f \in \partial \mathcal{G}} \Phi_\pi(f, m)$ and the triangle inequality. Now we plug in the error bound \eqref{eq:plm-pi-h2} for $\mathsf{H}_\pi(\hat{m})$ as well as the instance-dependent error bounds \eqref{eq:fact-2-gxi} and \eqref{eq:fact-2-gm0}, 
\begin{align*}
    \left|\frac{1}{n} \sum_{i=1}^n (m_0(X_i) - \hat{m}_i) f(X_i)\right| &~ \lesssim \|f\|_n^2 + (\delta_n^\stoc)^2 + \lambda \left[(\delta_\mu^\appr)^2 + \frac{t}{n} \right] + \|f\|_2 \cdot \delta_n^\stoc + (\delta_n^\stoc)^2 \\
    &\overset{(a)}{\lesssim} \|f\|_2^2 + (\delta_n^\stoc)^2 + \lambda \left[(\delta_\mu^\appr)^2 + \frac{t}{n} \right],
\end{align*} where (a) follows from the equivalence between $L_2$ norm and its empirical counterparts in \eqref{eq:fact-2-g2} and Young’s inequality. This concludes the proof for \eqref{eq:plm-pi-ins}.
\end{proof}